\newtheorem{theo}{Theorem}[section]
\newtheorem{definition}{Definition}[section]
\newtheorem{prop}[theo]{Proposition}
\newtheorem{lemma}[theo]{Lemma}
\newtheorem{conj}[theo]{Conjecture}
\newtheorem{claim}[theo]{Claim}
\newcommand{\HH}{{\cal H}}
\newcommand{\CC}{{\cal C}}
\newcommand{\eps}{{\varepsilon}}
\begin{document}
\date{}

\title{
Problems and results in Extremal Combinatorics - IV
}

\author{Noga Alon
\thanks
{Department of Mathematics, Princeton University,
Princeton, NJ 08544, USA and
Schools of Mathematics and
Computer Science, Tel Aviv University, Tel Aviv 69978,
Israel.  
Email: {\tt nogaa@tau.ac.il}.  
Research supported in part by
NSF grant DMS-1855464, ISF grant 281/17, BSF grant 2018267
and the Simons Foundation.}
}

\maketitle
\begin{abstract}
Extremal Combinatorics is among the most active topics in
Discrete Mathematics, dealing with problems that are often
motivated by questions in other areas,
including Theoretical Computer Science and Information Theory.
This paper contains a
collection of problems and results in the area, including solutions
or
partial solutions to open problems suggested by various
researchers.
The topics considered here include questions in
Extremal Graph Theory, Coding Theory and Social Choice.
This is by no means a comprehensive survey of the area,
and is merely a collection of
problems, results and proofs, which are hopefully
interesting.  
As the title of the paper suggests, this is a sequel
of three previous paper \cite{Al1}, \cite{Al2}, \cite{Al3}
of the same flavour.
Each section of this paper is essentially
self contained, and can be read separately.
\end{abstract}

\section{Maintaining high girth in graph packings}

We say that two $n$-vertex graphs $G_1$ and $G_2$
{\em pack} if
there exists an edge-disjoint
placement of them on the same set of $n$ vertices. 
There is an extensive literature
dealing with sufficient conditions ensuring that two graphs
$G_1$ and $G_2$ on $n$ vertices pack. A well known open conjecture on the
subject is the one of Bollob\'as and Eldridge
\cite{BE} asserting that if the maximum degrees in $G_1$ and
$G_2$ are $d_1$ and $d_2$, respectively, and if
$(d_1+1)(d_2+1) \leq n+1$ then $G_1$ and $G_2$ pack.
Sauer and Spencer (\cite{SS}, see also Catlin \cite{Ca}),
proved that this is the case if
$2 d_1 d_2 <n$. For a survey of packing results including
extensions, variants and relevant references, see
\cite{KKY}.

A natural extension of the packing problem is that of requiring
a packing in which the girth of the combined  graph whose edges
are those of the two packed graphs is large, assuming this is the
case for each of the individual graphs. Indeed, in the basic problem
the girth of each of the 
packed graphs exceeds $2$, and the packing condition
is simply the requirement that in the combined graph the girth 
exceeds $2$. Here we prove such an extension, observe that it implies 
the old result of Erd\H{o}s and Sachs \cite{ES} about the existence of high-
girth regular graphs, and describe an application for obtaining an
explicit construction of high-girth directed expanders.
\begin{theo}
\label{t111}
Let $G_1=(V_1,E_1)$ and $G_2=(V_2,E_2)$ 
be two $n$-vertex graphs, let $d_1$ be the
maximum degree of $G_1$ and let $d_2$ be the maximum degree of
$G_2$. Suppose the girth of each of the graphs $G_i$ is at least
$g >2$ and let $k$ be the largest integer satisfying
\begin{equation}
\label{e111}
1+(d_1+d_2)+(d_1+d_2)(d_1+d_2-1)+
\ldots + (d_1+d_2)(d_1+d_2-1)^{k-1} < n
\end{equation}
Then there is a packing of the two graphs so that the combined
graph has girth at least $\min\{g, k\}$.
\end{theo}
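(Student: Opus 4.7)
The plan is to construct $\sigma\colon V(G_2)\to V(G_1)$ greedily, placing vertices of $G_2$ one at a time. Write $D=d_1+d_2$, $g^*=\min\{g,k\}$, and $M_r=1+D+D(D-1)+\cdots+D(D-1)^{r-1}$, so that the hypothesis (\ref{e111}) reads $M_k<n$. Order $V(G_2)=\{w_1,\ldots,w_n\}$ in BFS order within each connected component, and let $G^{(i-1)}$ denote the graph on $V(G_1)$ consisting of $E(G_1)$ together with the $\sigma$-image of every $G_2$-edge whose endpoints both lie in $\{w_1,\ldots,w_{i-1}\}$. The inductive invariant is that $G^{(i-1)}$ has girth at least $g^*$; the terminal graph $G^{(n)}=G_1\cup\sigma(G_2)$ is then the required packing.

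At step $i$, placing $w_i$ at an unused vertex $v$ introduces the edges $\{v,\sigma(w_j)\}$ for every previously placed $G_2$-neighbor $w_j$ of $w_i$; write $N_i^-$ for the set of such back-neighbors. Since $G^{(i-1)}$ already has girth $\geq g^*$, any new cycle of length $<g^*$ must pass through $v$ and use either one or two of these new edges. A one-new-edge short cycle exists iff $v$ lies at $G^{(i-1)}$-distance $\leq g^*-2$ from some placed $\sigma(w_j)$. The key counting step is a Moore-type observation: if $w_i$ were hypothetically placed at $v$, each back-neighbor $\sigma(w_j)$ would be at distance $1$ from $v$ via a new edge, so the entire union $\bigcup_{w_j\in N_i^-}B_{G^{(i-1)}}(\sigma(w_j),g^*-2)$ sits inside the ball of radius $g^*-1$ around $v$ in the extended graph. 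Since that graph still has maximum degree $\leq D$, the Moore bound gives size $\leq M_{g^*-1}\leq M_{k-1}$. Together with the $i-1$ already-used vertices, this leaves a valid choice by the slack $M_k<n$ afforded by (\ref{e111}).

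The main obstacle is the two-new-edge case, in which the short cycle uses edges $\{v,\sigma(w_{j_1})\}$ and $\{v,\sigma(w_{j_2})\}$ closed by a path in $G^{(i-1)}$ of length $\leq g^*-3$; this hazard depends only on $G^{(i-1)}$ and not on $v$, so it must be forestalled at earlier steps. I would address it by strengthening the inductive hypothesis to require that, for every unplaced vertex $w$, any two already-placed $G_2$-neighbors of $w$ have $\sigma$-images at $G^{(i-1)}$-distance $\geq g^*-2$. Maintaining this invariant adds further forbidden balls of radius $g^*-3$ to the bad set at each step, but they are again covered by the same ball-of-radius-$(g^*-1)$ argument in a suitable auxiliary extended graph, so the Moore-style count still fits within the slack of (\ref{e111}). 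The delicate bookkeeping of which pairs must be kept apart and its compatibility with the BFS-type ordering is where I expect the technical crux to lie.
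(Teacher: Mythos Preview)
Your greedy scheme has a real gap in the counting step. At step $i$ you must find an \emph{unused} vertex of $V(G_1)$ lying outside the forbidden set, and there are only $n-(i-1)$ unused vertices left. Your Moore bound controls the forbidden-by-distance set by $M_{g^*-1}\le M_{k-1}$, so the sentence ``Together with the $i-1$ already-used vertices, this leaves a valid choice by the slack $M_k<n$'' is effectively asserting $(i-1)+M_{k-1}<n$. That fails once $i>n-M_{k-1}$: when you come to place the last $\sim M_{k-1}$ vertices of $G_2$ the handful of remaining free positions may all lie within distance $g^*-2$ of some already-placed back-neighbour, and the hypothesis $M_k<n$ gives no leverage whatsoever. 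This difficulty is intrinsic to a one-pass placement in which the pool of free vertices shrinks to zero. The strengthened invariant you sketch for the two-new-edge case only adds further forbidden balls and is hit by the same problem; moreover, the edges you add at step $i$ can \emph{shorten} $G^{(i)}$-distances between pairs of already-placed vertices, so the invariant for other unplaced $w$ can be destroyed even when $w_i$ itself is placed correctly.

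The paper's proof takes a completely different route, a swap (local-improvement) argument in the spirit of Erd\H{o}s--Sachs. First note that $M_k<n$ with $k\ge 3$ forces $2d_1d_2<n$, so by Sauer--Spencer a packing exists. Among all packings choose one whose combined graph $H$ has maximum girth $m$ and, subject to that, the fewest $m$-cycles. If $m<\min\{g,k\}$, pick an $m$-cycle $C$; since $m<g$ it uses edges of both colours, so some vertex $u$ on $C$ is incident in $C$ to one edge from $G_1$ and one from $G_2$. By the Moore bound applied to $H$ (maximum degree $\le d_1+d_2$) there is a vertex $v$ at distance $>k$ from $u$. Now swap the $G_1$-preimages of $u$ and $v$. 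A case analysis---every new cycle must use a moved $G_1$-edge at $u$ or $v$, and those edges connect vertices that were far apart in $H$---shows that no cycle of length $<m$ is created while $C$ itself is destroyed, contradicting the extremal choice. Here the Moore bound is used once, on the \emph{finished} combined graph where every vertex is already in place, so the ``running out of room'' obstruction that kills the greedy argument never appears.
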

Note that for fixed $d_1+d_2 \geq 3$ and large $n$, the number $k$
above is $(1+o(1))\frac{\log n}{\log (d_1+d_2-1)}$.

\subsection{Proof}
Clearly we may assume that both $G_1$ and $G_2$ have  edges, thus
$d_1$ and $d_2$ are positive. If $2d_1d_2 \geq n$ 
then $d_1+d_2 \geq \sqrt {2n}$ implying that
$1+(d_1+d_2)+(d_1+d_2)(d_1+d_2-1) \geq
1 +  \sqrt {2n}+ \sqrt {2n} (\sqrt {2n} -1) =2n+1>n$, that is, the
largest $k$ satisfying (\ref{e111}) is at most $1$. In this case the
conclusion is trivial since $\min\{g,k\} \leq 1$ and any
placement of $G_1,G_2$ will do. We thus may and will assume that
$2d_1d_2<n$. Since any placement will do even if $k=2$ we assume
that $k \geq 3$.
By the result of \cite{SS}, $G_1,G_2$ pack.
Among all possible packings choose one in which the girth $m$ of the
combined graph is maximum, and the number of cycles of length $m$
in this combined graph is minimum (if the girth is infinite there
is nothing to prove). Suppose this packing is given by two
bijections $f_1:V_1 \mapsto V$ and $f_2:V_2 \mapsto V$
where $V$ is the fixed set of $n$  vertices of the
combined graph, which we denote by $H=(V,E)$. As $G_1$ and $G_2$ pack,
$m \geq 3$.
ref{e111}). Let
$v_1=f_1^{-1}(v)$ be the preimage of $v$ in $V_1$. Let 
$f_1': V_1 \mapsto V$ be the bijection obtained from $f_1$ by
swapping the images of $u_1$ and $v_1$. Formally, $f'_1(u_1)=v,
f'_1(v_1)=u$, and $f'_1(w)=f_1(w)$ for all $w \in V_1-\{u_1,v_1\}$. 

We claim that in the embedding of $G_1,G_2$ given by $f'_1,f_2$
the girth of the combined graph, call it $H'$, is at least $m$  and
the number of cycles of length $m$ in $H'$ is smaller than the
corresponding number in $H$, contradicting the minimality in the
choice of $f_1,f_2$. To prove this claim put $u_2=f_2^{-1}(u)$,
$v_2=f_2^{-1}(v)$.
Let $X_1$ denote the set of
images under $f_1$ of all the neighbors of $u_1$ in $G_1$, and let
$X_2$ denote the set of images under $f_2$ of all neighbors of
$u_2$ in $G_2$. Similarly, let
$Y_1$ be the set of
images under $f_1$ of all the neighbors of $v_1$ in $G_1$, and let
$Y_2$ be the set of images under $f_2$ of all neighbors of
$v_2$ in $G_2$. Note that since $m \geq 3$ and
$k+1 \geq 3$ all four sets $X_1,X_2,Y_1,Y_2$ are pairwise disjoint.
The cycles of length $m$ in $H$ and $H'$ that do
not contain any of the two vertices $u,v$ are exactly the same
cycles. On the other hand, the 
cycle $C$ is of length $m$ and it exists in $H$ but not in $H'$, since
all edges of $H$ between $u$ and $X_1$ do not belong to
$H'$, and $C$ contains such an edge (as well as an edge from
$u$ to $X_2$). Any cycle $C'$ of $H'$ that is not a cycle of $H$ must
contain at least one edge either between $u$ and $Y_1$ or between
$v$ and $X_1$ (or both).  Consider the following possible cases.
\vspace{0.2cm}

\noindent
{\bf Case 1a:}\, $C'$ contains $u$ but not $v$ and contains two
edges from $u$ to $Y_1$. In this case the cycle of $H$ obtained 
from $C'$ by replacing $u$ by $v$ is of the same length as $C'$.
This is a one-to-one correspondence between cycles as above of length
$m$ in $H'$ and in $H$ (if there are any such cycles).
\vspace{0.2cm}

\noindent
{\bf Case 1b:}\, $C'$ contains $u$ but not $v$ and contains an edge
$uy_1$ from $u$ to $Y_1$ and an edge $ux_2$ from $u$ to $X_2$. 
In this case the part of the cycle between $y_1$ and $x_2$ which
does not contain $u$ is a path in $H$ between $y_1$ and $x_2$. The
length of this path is at least $k-1$, since the distance in $H$
between $u$ and $v$ is at least $k+1$. Therefore, the length of
$C'$ is at least $(k-1)+2=k+1>m$.
\vspace{0.2cm}

\noindent
{\bf Case 1c:}\, $C'$ contains $v$ but not $u$: this is symmetric
to either Case 1a or Case 1b.
\vspace{0.2cm}

\noindent
{\bf Case 2a:}\, $C'$ contains both $u$ and $v$ and contains two
edges $uy_1,uy'_1$ from $u$ to $Y_1$. If both neighbors of $v$
in $C'$ belong to $Y_2$ then each of the parts of $C'$ connecting any of
them to $y_1$ or to $y'_1$ is of length at least
$m-2$, since the girth of $H$ is $m$, hence the total length
of $C'$ is at least $2(m-2)+4=2m>m$. If both neighbors of
$v$ in $C'$ are in $X_1$ then since the distance in $H$ between 
$X_1$ and $Y_1$ is at least $k-1$, in this case the length of $C'$
is at least $2(k-1)+4=2k+2>m$. If the two neighbors of $v$
in $C'$ are $y_2 \in Y_2$ and $x_1 \in X_1$ then the cycle
$C'$ contains a path from $y_2$ to either $y_1$ or $y'_1$, whose
length is at least $m-2$, and a path from $x_1$ to either $y_1$ or
$y'_1$, of length at least $k-1$. Thus the total length of $C'$
is at least $(m-2)+(k-1)+4 >m$.
\vspace{0.2cm}

\noindent
{\bf Case 2b:}\, $C'$ contains both $u$ and $v$ and the two
neighbors of $u$ in $C'$ are $y_1 \in Y_1$ and $x_2 \in X_2$.
In this case the path in $C'$ from $v$ to $y_1$ is of length at
least $m-1$ if it does not pass through $X_1$, and at least
$k$ if it passes through $X_1$, and the path from $v$ to $x_2$ is
of length at least $k$ if it does not pass through $X_1$ and of
length at least $m-1$ if it does pass through $X_1$. In all these cases
the length of $C'$ is at least $2+2 \min\{m-1,k\}=2m >m$
(where here we used the assumption that $m<k$).
\vspace{0.2cm}

\noindent
{\bf Case 2c:}\, $C'$ contains both $u$ and $v$ and at least one
edge from $v$ to $X_1$. This is symmetric to either Case 2a or Case
2b.

It thus follows that the number of cycles of length $m$ in $H'$ is
smaller than that number in $H$, contradicting the minimality in
the choice of $H$ and implying that $m \geq \min \{g,k\}$. 
This completes the proof of the theorem.
\hfill $\Box$
\vspace{0.2cm}

\noindent
{\bf Remark:}\, The above proof is constructive, that is, it
provides a polynomial algorithm to find a packing of given graphs
$G_1,G_2$ as above, with the asserted bound on the girth of the
combined graph. Indeed, as long as the girth is too small we can
find a shortest cycle $C$, take in it a vertex $u$ as in the
proof, find a vertex $v$ far from it and swap their roles in the
image of $G_1$. By the argument above this decreases the number of
short cycles by at least $1$. As the total number of such cycles is
less than $n$ by the choice of the parameters 
and by (\ref{e111}), this process terminates in
polynomial time.

\subsection{Directed expanders}

By applying Theorem \ref{t111} repeatedly, starting with a cycle of
length $n$, it follows that for every
$d$ and all large $n$ there is a $2d$-regular graph on $n$ vertices
with girth at least $(1+o(1))\frac{\log n}{ \log (2d-1)}$ which can
be decomposed into $d$ Hamilton cycles. This is a (modest)
strengthening of the result of Erd\H{o}s and Sachs about the
existence of regular graphs of high girth. A more interesting
application of Theorem \ref{t111} is a strengthening of a result
proved in \cite{AMP} about the existence of high-girth directed
expanders. 

\begin{theo}
\label{t131}
For every  prime $p$ congruent to $1$ modulo $4$ and any $n>n_0(d)$ 
there is an explicit
construction of a $2d$-regular graph on $n$ vertices 
with (undirected) girth
at least $(\frac{2}{3}-o(1)) \frac{\log n}{\log (d-1)}$ 
and an orientation of this graph
so that for every two sets of vertices $X,Y$ satisfying
\begin{equation}
\label{e131}
\frac{|X|}{n} \cdot \frac{|Y|}{n} \geq \frac{16}{d}
\end{equation}
there is a directed edge from $X$ to $Y$ and a directed edge from
$Y$ to $X$.
\end{theo}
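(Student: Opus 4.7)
The plan is to take a non-bipartite Lubotzky--Phillips--Sarnak Ramanujan Cayley graph of degree $d = p+1$, pack it with a second $d$-regular graph of comparable girth via Theorem~\ref{t111} to obtain the desired $2d$-regular graph, and then use the representation-theoretic bound on the directed adjacency of the Cayley graph to establish the claimed directed expansion.

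For the first ingredient I would take $G_0 = X^{p,q}$, the non-bipartite LPS Cayley graph of $\operatorname{PSL}_2(\mathbb{F}_q)$, where $q$ is a prime with $\bigl(\tfrac{q}{p}\bigr) = 1$; the generating set $S$ of size $p+1 =: d$ is built from $(p+1)/2$ pairs of inverse integral quaternions of norm $p$, which is exactly what the hypothesis $p \equiv 1 \pmod 4$ provides. This $G_0$ is $d$-regular on $n = q(q^2-1)/2$ vertices, Ramanujan (second eigenvalue at most $2\sqrt{d-1}$), and has girth at least $2\log_p q = \bigl(\tfrac{2}{3} + o(1)\bigr)\log_{d-1} n$. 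The splitting $S = S_+ \sqcup S_+^{-1}$ with $|S_+| = d/2$ gives a canonical Eulerian orientation of $G_0$: each edge $\{g, gs\}$ is directed $g \to gs$ when $s \in S_+$.

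Next, I would apply Theorem~\ref{t111} to pack $G_0$ with a second $d$-regular graph $G_1$ of girth at least $(\tfrac{2}{3} - o(1))\log_{d-1} n$ --- for instance, another LPS graph on the same vertex set. By Theorem~\ref{t111}, the $2d$-regular packed graph $H$ has girth at least $\min\{g(G_0),\, g(G_1),\, k\}$ with $k = (1+o(1))\log_{2d-1} n$. The elementary inequality $\log_{2d-1} n \ge \bigl(\tfrac{2}{3} - o(1)\bigr)\log_{d-1} n$ is equivalent, up to lower-order terms, to $(d-1)^3 \ge (2d-1)^2$, which holds for all $d \ge 6$ (a range that covers all admissible values $d = p+1$ with $p \equiv 1 \pmod 4$); thus $H$ has girth $\bigl(\tfrac{2}{3} - o(1)\bigr)\log_{d-1} n$, as required.

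Finally, orient the edges of $G_0 \subseteq H$ by the canonical Cayley orientation above, and orient the edges of $G_1$ arbitrarily. By the argument of \cite{AMP}, the Ramanujan property of $G_0$ combined with the representation theory of $\operatorname{PSL}_2(\mathbb{F}_q)$ implies that the second singular value of the directed $(d/2)$-regular Cayley digraph $(V, S_+)$ is at most $2\sqrt{d-1}$. The directed expander mixing lemma then yields
\[
e^+(X, Y) \;\ge\; \frac{d\,|X|\,|Y|}{2n} \;-\; 2\sqrt{d-1}\,\sqrt{|X|\,|Y|},
\]
which is positive whenever $|X|\,|Y|/n^2 > 16(d-1)/d^2$, and in particular whenever $|X|\,|Y|/n^2 \ge 16/d$; swapping the roles of $X$ and $Y$ produces a directed edge from $Y$ to $X$. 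The main obstacle is the passage from the Ramanujan bound on the Hermitian operator $A_+ + A_+^{\ast}$ (which is just the adjacency of $G_0$) to a bound on the singular values of the non-Hermitian matrix $A_+$ itself; this is the technical core inherited from \cite{AMP} and uses the specific structure of the LPS generators. By contrast, the girth-boosting step via Theorem~\ref{t111} and the elementary logarithmic comparison are routine.
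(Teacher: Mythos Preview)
Your girth argument via Theorem~\ref{t111} and the comparison $(d-1)^3 \ge (2d-1)^2$ for $d\ge 6$ matches the paper's exactly. But from the orientation step onward your route diverges from the paper's, and the divergence contains a real gap.

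The paper does \emph{not} use any singular-value bound for a non-symmetric Cayley operator. Instead it packs two copies $H_1,H_2$ of the \emph{same} $(n,d,2\sqrt{d-1}+o(1))$-graph, numbers the vertices $1,\dots,n$, and orients each edge $ij$ (with $i<j$) from $i$ to $j$ if it lies in $H_1$ and from $j$ to $i$ if it lies in $H_2$. Given $X,Y$ with $|X||Y|/n^2\ge 16/d$, let $x,y$ be the medians of $X,Y$ in this ordering, say $x\le y$; set $A=\{a\in X:a\le x\}$ and $B=\{b\in Y:b\ge y\}$, so $|A||B|/n^2\ge 4/d>(2\sqrt{d-1}+o(1))^2/d^2$. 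The \emph{undirected} expander mixing lemma for $H_1$ then gives an $H_1$-edge between $A$ and $B$, which by the orientation rule points from $A\subseteq X$ to $B\subseteq Y$; the same argument applied to $H_2$ gives the reverse edge. This median trick is what the paper attributes to \cite{AMP}.

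Your proposal instead orients only $G_0$ via the Cayley splitting $S=S_+\sqcup S_+^{-1}$ and asserts that the second singular value of the directed adjacency $A_+$ is at most $2\sqrt{d-1}$, crediting \cite{AMP}. That attribution is off: the argument the paper imports from \cite{AMP} is precisely the median/ordering trick above, not a spectral bound on $A_+$. And the bound you want is not a formal consequence of the Ramanujan bound on $A_++A_+^{*}$; you yourself flag this passage as ``the main obstacle'' but do not supply it. Without it, your directed mixing inequality is unjustified, so the expansion claim is not established.

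A secondary gap: the theorem asserts the construction for \emph{every} $n>n_0(d)$, not only for the LPS values $n=q(q^2-1)/2$. The paper closes this via the vertex-deletion/edge-addition procedure of \cite{Al4}, which yields an $(n,d,2\sqrt{d-1}+o(1))$-graph on exactly $n$ vertices with essentially the same girth; you should invoke the same step before packing.
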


This improves the estimate on the girth in the result proved in \cite{AMP} 
by a factor of $3$, and also works for all large $n$. The proof
combines Theorem \ref{t111} with an argument from \cite{AMP} and
a recent result proved in \cite{Al4}. An explicit construction
here means that there is a polynomial time deterministic algorithm
for constructing the desired graphs.

\begin{proof}
An $(n,d,\lambda)$-graph is a $d$ regular graph on $n$ vertices in
which the absolute value of any nontrivial eigenvalue is at most
$\lambda$. The graph is Ramanujan if $\lambda=2 \sqrt {d-1}$.
Lubotzky, Phillips and Sarnak \cite{LPS}, and
independently Margulis \cite{Ma} gave, 
for every prime $p$ congruent to $1$ modulo $4$, an explicit
construction of infinite 
families
of $d=p+1$-regular Ramanujan graphs. The girth of these graphs
is at least $(1+o(1))\frac{2}{3}\log_{d-1} n'$, where $n'$ is the
number of vertices. In \cite{Al4} it is shown 
how one can modify these graphs by deleting a set of
appropriately chosen $n'-n$ vertices and by adding edges
among their neighbors to get an $(n,d,2\sqrt{d-1}+o(1))$-graph with
exactly $n$ vertices keeping the girth essentially the same.
Fix such a graph $H$. By Theorem \ref{t111} we can pack two
copies of it $H_1,H_2$ keeping the girth of the combined graph at least
$$
\min \{(1+o(1))\frac{2}{3}\log_{d-1} n, (1+o(1)) \log_{2d-1} n\}
=(1+o(1))\frac{2}{3}\log_{d-1} n,
$$
where here we used the fact that for all admissible $d$,
$$
\frac{2}{3 \log (d-1)} \leq  \frac{1}{\log (2d-1)}.
$$
Let $G$ be the combined graph. Number its vertices $1,2, \ldots ,n$
and orient every edge $ij$ with $i<j$ from $i$ to $j$ if it belongs
to the copy of $H_1$ and from $j$ to $i$ if it belongs to the 
copy of $H_2$.

It is well known (c.f. \cite{AS},  Corollary 9.2.5) that if
$A,B$ are two subsets of an $(n,d,\lambda)$-graph and
$$
\frac{|A||B|}{n^2} > \frac{\lambda^2}{d^2}
$$ 
then there is an edge connecting $A$ and $B$. Let
$X$ and $Y$ be two sets of vertices satisfying (\ref{e131}). 
Let $x$ be the median of $X$ (according to the numbering 
of the vertices), $y$ the median of $Y$. Without loss of generality
assume that $x \leq y$. Let $A$ be the set of all vertices of $X$
which are smaller or equal to $x$, $B$ the set of all vertices of 
$Y$ that are larger or equal to $y$. Then  $|A| \geq |X|/2$ and
$|B| \geq |Y|/2$. Therefore
$$
\frac{|A||B|}{n^2} \geq \frac{|X||Y|}{4 n^2 } \geq 
\frac{4}{d} > \frac{(2\sqrt{d-1}+o(1))^2}{d^2}.
$$ 
Therefore there is an edge of $H_1$ connecting $A$ and $B$
which, by construction, is oriented from $A$ to $B$.
Similarly there is an 
edge of $H_2$ oriented from $B$ to $A$.
This completes the proof.
\end{proof}

\section{Nearly fair representation } 
The approach described here was initiated in discussions with
Eli Berger and Paul Seymour \cite{BS}. 
Let $G=(V,E)$ be a graph and let $P$ be an arbitrary partition
of its set of edges 
into $m$ pairwise disjoint subsets $E_1, E_2, \ldots ,E_m$. 
The sets $E_i$ will be called the color classes of the partition.
For any
subgraph $H'=(V',E')$ of $G$, let
$x(H',P)$ denote the vector $(x_1,x_2, \ldots ,x_m)$, where
$x_i=|E_i \cap E'|$ is the number of edges of $H'$ that lie in
$E_i$. Thus, in particular, $x(G,P)=(|E_1|, \ldots ,|E_m|).$
In a completely fair representation of the sets $E_i$ in $H'$,
each entry $x_i$ of the vector $x(H',P)$ should be equal
to $|E_i| \cdot \frac{|E'|}{|E|}$. Of course such equality can hold
only if all these numbers are integers. But
even when this is not the case the equality may hold up to a small
additive error. 

In this section we are interested in results (and conjectures)
asserting that when $G$ is either the complete graph $K_n$ or the
complete bipartite graph $K_{n,n}$, then for certain graphs $H$
and for any partition $P$ of $E(G)$ into color classes $E_1,\ldots ,E_m$,
there is a subgraph $H'$ of $G$ which is isomorphic to $H$ so that
the vector $x(H',P)$ is close (or equal) to the vector 
$x(G,P)\frac{|E(H')|}{|E(G)|}$. Stein 
\cite{St} conjectured that if  $G=K_{n,n}$ and $P$ is any partition
of the edges of $G$ into $n$ sets, each of size $n$, then 
there is always a perfect matching  $M$ in $G$ satisfying
$x(M,P)=\frac{1}{n} x(G,P)$, that is, a perfect matching containing
exactly one edge from each color class of $P$. This turned out to
be false, a clever counter-example has been given by Pokrovskiy and
Sudakov. In \cite{PS} they describe a partition of the edges of
$K_{n,n}$ into $n$ sets, each of size $n$, so that every perfect
matching misses at least $\Omega( \log n)$ color classes.

In \cite{AABCKLZ}  
it is conjectured that when $G=K_{n,n}$, $P$ is arbitrary,
and $H$ is a matching of
size $n$, then there is always a copy $H'$ of $H$ (that is, a
perfect matching $H'$ in $G$), so that
$$
\|x(H',P)-\frac{1}{n} x(G,P)\|_{\infty} <2.
$$
This is proved in \cite{AABCKLZ} (in a slightly stronger form) 
for partitions $P$
with $2$ or  $3$ color classes. Here we first prove the following, 
showing that when allowing a somewhat  larger additive error
(which grows with the number of colors $m$ but is independent of $n$)
a similar result holds for partitions with any fixed number of
classes.
\begin{theo}
\label{t211}
For any partition $P$ of the edges of the complete bipartite graph
$K_{n,n}$ into $m$ color classes, there is a perfect matching $M$ so that
$$
\| x(M,P)-\frac{1}{n} x(K_{n,n},P) \|_{\infty}  \leq
\| x(M,P)-\frac{1}{n} x(K_{n,n},P) \|_{2} 
< (m-1)2^{(3m-2)/2}.
$$
\end{theo}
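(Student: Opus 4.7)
The plan is to prove the $\ell_2$ bound (the $\ell_\infty$ bound being automatic from the standard inequality $\|\cdot\|_\infty\leq\|\cdot\|_2$) by induction on the number of color classes $m$. Write $c_m:=(m-1)2^{(3m-2)/2}$ and $\mu_i:=|E_i|/n$, so that $\sum_i\mu_i=n$. The base case $m=1$ is immediate: $|E_1|=n^2$, every perfect matching has exactly $n$ edges of color $1$, and the deviation vector is zero.

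For the inductive step my strategy is to first fix the count of one distinguished color, then recurse on a smaller instance. Concretely, group the $m$ colors into two super-classes, $E_m$ and $E_1\cup\cdots\cup E_{m-1}$, and invoke the sharp two-color result of \cite{AABCKLZ} to obtain a perfect matching $M_0$ with $t:=|M_0\cap E_m|$ satisfying $|t-\mu_m|<2$. The $2t$ vertices saturated by these $E_m$-edges can then be removed, leaving a copy of $K_{n-t,n-t}$. On this smaller complete bipartite graph, I would apply (a mild strengthening of) the inductive hypothesis to the $(m-1)$-coloring induced by $E_1,\ldots,E_{m-1}$, treating the $E_m$-edges that still lie inside the subgraph as a forbidden set to be avoided. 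The resulting matching $M_1$, combined with $M_0\cap E_m$, yields a perfect matching $M$ of $K_{n,n}$ whose $m$-th deviation coordinate is bounded by $2$ and whose remaining coordinates are controlled by $c_{m-1}$ plus corrections from rescaling targets between scales $n$ and $n-t$.

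The main obstacle is this reduction step: the inductive hypothesis as stated does not accommodate a forbidden edge set, and the targets change from $\mu_i=|E_i|/n$ to $|E_i\cap E(K_{n-t,n-t})|/(n-t)$ after rescaling. I would address this by strengthening the inductive statement to allow a distinguished forbidden class together with $m-1$ target classes on any complete bipartite graph, and by bounding the rescaling error using $|t-\mu_m|<2$ so that it contributes only a constant per level. A careful accounting should produce a recursion of the shape $c_m^2\leq 8\,c_{m-1}^2+O(1)$, the per-level multiplicative factor $2^{3/2}$ arising from the aggregation of errors in the $\ell_2$ norm, which with $c_1=0$ telescopes to $c_m=(m-1)2^{(3m-2)/2}$ as claimed. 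As a backup (in case the forbidden-class reformulation proves awkward), a direct extremal argument should also work: start with an arbitrary perfect matching and repeatedly perform alternating $4$-cycle swaps that strictly decrease $\sum_i(X_i-\mu_i)^2$, then bound the deviation at a local minimum by a double-counting argument on the edges of a putative dominant color.
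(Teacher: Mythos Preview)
Your inductive reduction has a genuine gap at the rescaling step. After you delete the $2t$ vertices matched by $M_0\cap E_m$, the matching $M_1$ you build in $K_{n-t,n-t}$ will, by the inductive hypothesis, have each $|M_1\cap E_i|$ close to $|E_i'|/(n-t)$, where $E_i'$ is what remains of $E_i$ in the subgraph. But the quantity you actually need to control is $|M\cap E_i|=|M_1\cap E_i|$ versus the \emph{original} target $\mu_i=|E_i|/n$. The difference $\mu_i-|E_i'|/(n-t)$ is not bounded by an absolute constant: deleting $2t$ vertices can remove up to roughly $2tn$ edges of a single color $i$, so this discrepancy can be of order $t$, and $t\approx\mu_m$ can be as large as $\Theta(n)$. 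Knowing only $|t-\mu_m|<2$ does nothing to control which edges of the other colors are lost. The proposed strengthening to a ``forbidden class plus $m-1$ target classes'' does not repair this, and in any case that strengthened statement is not obviously true (the forbidden class could block all perfect matchings in the subgraph). Finally, your claimed recursion $c_m^2\le 8c_{m-1}^2+O(1)$ is asserted rather than derived; nothing in the sketch explains where the factor $8$ comes from.

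Your backup plan is in fact much closer to the paper's actual argument, but it is missing the key technical ingredient. The paper does take a perfect matching $M$ minimizing $\|x(M,P)-\tfrac{1}{n}x(K_{n,n},P)\|_2^2$ and considers the $4$-cycle swaps. The crucial observation is that the swap vectors (each with $\ell_1$-norm at most $4$ and coordinate sum $0$) have $y-x$ in their positive cone; Carath\'eodory's theorem then writes $y-x$ as a positive combination of at most $m-1$ linearly independent swap vectors, and Cramer's rule together with Hadamard's inequality bounds the coefficients by $\|y-x\|_2\cdot(2s^2)^{(m-2)/2}$ with $s=2$. Comparing this to the minimality condition yields the bound $(m-1)2^{(m-2)/2}s^m=(m-1)2^{(3m-2)/2}$. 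A ``double-counting argument on the edges of a dominant color'' will not produce this; the Carath\'eodory--Hadamard step is where the exponential in $m$ and the exact constant come from.
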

It is worth noting that a random perfect matching $M$ typically
satisfies
$$
\| x(M,P)-\frac{1}{n} x(K_{n,n},P) \|_{\infty}  \leq O(\sqrt n).
$$
The main challenge addressed in the theorem is to get an upper
bound independent of $n$.

Theorem \ref{t211} is a special case of a general result which we
describe next, starting with the following definition.
\begin{definition}
\label{d221}
Let $G$ be a graph and let $H$ be a subgraph of it. Call a family of
graphs $\HH$ (which may have repeated members) 
a {\em uniform cover of width $s$ of the pair $(G,H)$}
if every member $H'$ of $\HH$ is a subgraph of $G$ which is isomorphic
to $H$, the number of edges of each such $H'$ which are not edges
of $H$ is at most $s$, every edge of $H$ belongs to the same number
of members of $\HH$, and every edge in $E(G)-E(H)$ 
belongs to the same positive number of members of $\HH$.
\end{definition}
An example of a uniform cover of width $s=2$ for
$G=K_{n,n}$ and $H$ a perfect matching in it is the following.
Let the $n$ edges of $H$ be
$a_ib_i$ where $\{a_1,a_2, \ldots ,a_n\}$ and
$\{b_1, b_2, \ldots ,b_n\}$ are the vertex classes of $G$. Let
$\HH$ be the family of all perfect matchings of $G$ obtained from $H$
by omitting a pair of edges $a_ib_i$ and $a_jb_j$ and by adding 
the edges $a_ib_j$ and $a_jb_i$. The width is $2$, every edge of
$H$ belongs to exactly ${n \choose 2}-(n-1)$ members of $\HH$, and every
edge in $E(G)-E(H)$ belongs to exactly $1$ member of
$\HH$.
\begin{theo}
\label{t213}
Let $G$ be a graph with $g$ edges, 
let $F$ be a
subgraph of it with $f$ edges,
and suppose there is a uniform cover of width $s$ of the
pair $(G,F)$. Then for any partition $P$ of the edges of $G$ into
$m$-subsets, there is a copy $H$ of  $F$ in $G$ so that
$$
\| x(H,P)-\frac{f}{g} x(G,P) \|_{\infty} \leq
\| x(H,P)-\frac{f}{g} x(G,P) \|_{2} \leq (m-1)2^{(m-2)/2}s^m.
$$
\end{theo}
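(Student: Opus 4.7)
My plan is to prove Theorem \ref{t213} by induction on the number $m$ of color classes. The case $m=1$ is trivial: the single-coordinate error vector satisfies $x(H,P)-(f/g)\,x(G,P) = f - (f/g)\cdot g = 0$, matching the claimed $B_1=0$.

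For the inductive step from $m-1$ to $m$, I would merge the last two color classes $E_{m-1}$ and $E_m$ to obtain an $(m-1)$-class partition $P'$ and apply the inductive hypothesis to produce a copy $H_0$ of $F$ with $\|x(H_0,P')-(f/g)\,x(G,P')\|_2 \le B_{m-1}$. This already controls the first $m-2$ coordinates of the $P$-error vector $e(H_0)$ and the sum of its last two coordinates; the remaining task is essentially one-dimensional, namely to equalize those last two coordinates as much as possible. The tool is the uniform cover: each $H' \in \mathcal H$ differs from $F$ in at most $s$ edges, so its $P$-error vector differs from $e(F)$ by a vector of $\ell_\infty$-norm at most $s$ whose coordinates sum to zero. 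Starting from $H_0$ and using swaps analogous to those in $\mathcal H$ (lifted along an isomorphism $F \to H_0$, or obtained by rerunning the argument with $H_0$ playing the role of $F$), I would greedily select the swap most decreasing $\|e(H)\|_2^2$ and iterate until no improving swap exists.

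The main obstacle is the bookkeeping. Each swap can shift every coordinate of the error by up to $s$, so moves designed to equalize the last two coordinates inevitably perturb the first $m-2$. The crucial estimate is that at a local optimum $H^*$, both the residual $|e_{m-1}(H^*)-e_m(H^*)|$ and the accumulated $\ell_2$-perturbation on the other coordinates are of order $s\,B_{m-1}$, yielding the recursive bound $\|e(H^*)\|_2 \le \tfrac{m-1}{m-2}\sqrt{2}\,s\,B_{m-1}$, which telescopes to $B_m=(m-1)\,2^{(m-2)/2}\,s^m$. The positivity clause in the definition of a uniform cover—every edge of $E(G)-E(F)$ belonging to at least one member of $\mathcal H$—plays an essential role, guaranteeing enough candidate swaps at each intermediate copy to drive the local improvement; without it the residual error could grow with $n$. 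A cleaner variant, if the merging bookkeeping becomes unwieldy, is to skip the induction and pick $H^*$ directly minimizing $\|e(H^*)\|_2^2$ over all copies of $F$ in $G$; any violation $\|e(H^*)\|_2 > B_m$ is then contradicted by producing a single swap in $\mathcal H$ that strictly decreases the squared norm, via the same combinatorial swap estimates.
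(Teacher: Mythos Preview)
Your ``cleaner variant'' at the end is the right instinct and is exactly what the paper does: take $H$ minimizing $\|e(H)\|_2^2$ over all copies of $F$ and derive a contradiction if this norm exceeds the stated bound. But your proposal stops precisely where the real work begins. At a minimizer, all you know is that for every $H'$ in the cover, $\langle e(H), v_{H'}\rangle \le \tfrac12\|v_{H'}\|_2^2 \le s^2$, where $v_{H'}=x(H',P)-x(H,P)$. That upper bound is useless unless you can also exhibit \emph{some} $v_{H'}$ whose inner product with $e(H)$ is at least a fixed positive multiple of $\|e(H)\|_2$. Your sketch simply asserts such a thing (``the crucial estimate is that at a local optimum\ldots''), and the inductive merging you outline does not supply it: swaps aimed at equalizing the last two coordinates perturb the first $m-2$ by up to $s$ each, and nothing in your plan bounds how many swaps are needed or prevents the accumulated perturbation from growing with $n$.

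The missing idea is this. The uniform-cover hypothesis is used not merely to guarantee ``enough candidate swaps'', but to force the exact identity $\sum_{H'\in\HH} v_{H'} = c\,(y-x)$ for a constant $c>0$, where $y=(f/g)\,x(G,P)$ and $x=x(H,P)$; this is where the equal-multiplicity conditions on edges of $H$ and on edges of $E(G)\setminus E(H)$ are both needed. Once $y-x$ lies in the positive cone of the $v_{H'}$, Carath\'eodory's theorem writes it as a positive combination of at most $m-1$ linearly independent $v_{H'}$'s, Cramer's rule together with Hadamard's inequality bounds each coefficient by $\|y-x\|_2\,(2s^2)^{(m-2)/2}$, and taking inner products with $y-x$ then produces a specific $H'$ with $\langle y-x,v_{H'}\rangle \ge \|y-x\|_2\big/\bigl[(m-1)(2s^2)^{(m-2)/2}\bigr]$. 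Combining this lower bound with the minimality upper bound $\langle y-x,v_{H'}\rangle\le s^2$ gives the theorem in one stroke. Your recursion $\|e(H^*)\|_2 \le \tfrac{m-1}{m-2}\sqrt{2}\,s\,B_{m-1}$ cannot be justified without some substitute for this step, and none is offered.
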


Theorem \ref{t211} is a simple consequence of Theorem \ref{t213}.
A similar simple consequence is the following.
\begin{prop}
\label{p214}
For any partition $P$ of the edges of the complete graph
$K_{n}$ into $m$ color classes, there is a Hamilton cycle $C$ so that
$$
\| x(C,P)-\frac{2}{n-1} x(K_{n},P) \|_{\infty} \leq 
\| x(C,P)-\frac{2}{n-1} x(K_{n},P) \|_{2} 
< (m-1)2^{(3m-2)/2}.
$$
\end{prop}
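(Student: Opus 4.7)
The plan is to deduce Proposition~\ref{p214} directly from Theorem~\ref{t213}, applied to $G=K_n$ and $F=C_0$, a fixed Hamilton cycle on $v_1,\ldots,v_n$ with edges $v_iv_{i+1}$ (indices mod $n$). Here $g=\binom{n}{2}$, $f=n$, and $f/g=2/(n-1)$, which matches the normalization in the statement. The bound $(m-1)2^{(m-2)/2}s^m$ of Theorem~\ref{t213} collapses to the target $(m-1)2^{(3m-2)/2}$ precisely when $s=2$, so the entire task reduces to exhibiting a uniform cover of width $2$ for the pair $(K_n,C_0)$.

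For the cover I would use the classical 2-opt operation. For each pair of non-adjacent edges $v_iv_{i+1}, v_jv_{j+1}$ of $C_0$, delete these two edges and add the two chords $v_iv_j$ and $v_{i+1}v_{j+1}$; a routine check shows that this reconnects the resulting arcs $v_{i+1}\cdots v_j$ and $v_{j+1}\cdots v_i$ into a single Hamilton cycle. Let $\mathcal{H}$ be the family of all Hamilton cycles so obtained. Each member of $\mathcal{H}$ shares exactly $n-2$ edges with $C_0$ and has exactly $2$ edges outside $E(C_0)$, so the width is $s=2$.

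The two uniformity conditions of Definition~\ref{d221} are then straightforward to verify. Any fixed edge $v_iv_{i+1}\in E(C_0)$ is deleted in exactly $n-3$ of the 2-opt moves (one for each non-adjacent partner), so every edge of $C_0$ belongs to the same number of members of $\mathcal{H}$. A chord $v_av_b\notin E(C_0)$ is produced only when we delete either $\{v_av_{a+1}, v_bv_{b+1}\}$ (with partner chord $v_{a+1}v_{b+1}$) or $\{v_{a-1}v_a, v_{b-1}v_b\}$ (with partner chord $v_{a-1}v_{b-1}$); hence every chord of $K_n$ lies in exactly $2$ members of $\mathcal{H}$, a uniform positive count. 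Invoking Theorem~\ref{t213} then yields the claimed bound.

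The only real idea is recognizing that 2-opt simultaneously keeps the width at $2$ and yields a uniform count on the chords, exactly parallel to the width-$2$ swap cover used in the perfect matching example preceding Theorem~\ref{t213}. I expect no genuine obstacle beyond this observation; the very small cases (e.g.\ $n\le 3$) are vacuous since the right-hand side of the claimed bound is independent of $n$.
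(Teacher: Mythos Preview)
Your proposal is correct and is essentially the same as the paper's own argument: the paper also deduces Proposition~\ref{p214} from Theorem~\ref{t213} via a width-$2$ uniform cover of $(K_n,C)$ given by the $n(n-3)/2$ Hamilton cycles obtained from $C$ by the 2-opt move (delete two nonadjacent edges, add the unique reconnecting pair). Your verification that each chord lies in exactly two members and each cycle edge is omitted from exactly $n-3$ members is more explicit than what the paper spells out, but the construction is identical.
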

Similar  statements follow, by the same reasoning,
for a Hamilton  cycle in a complete bipartite 
graph, or for a perfect matching in a complete graph on an even number
of vertices. We proceed to describe a more general application.

For a fixed graph $T$ whose number of vertices $t$ divides $n$, 
a $T$-factor in $K_n$ is the graph consisting of $n/t$ pairwise
vertex disjoint copies of $T$. In particular, when $T=K_2$ this is
a perfect matching. 
\begin{theo}
\label{t215}
For any fixed graph $T$ with $t$ vertices and $q$ edges
and any $m$ there is a
constant $c=c(t,q,m) \leq (m-1)2^{(m-2)/2}(qt)^m$ 
so that for any $n$ divisible by $t$ and for
any partition $P$ of the edges of the complete graph $K_n$ into $m$
subsets, there is a $T$-factor $H$ so that
$$
\| x(H,P)-\frac{2q}{(n-1)t} x(K_{n},P) \|_{\infty} \leq
\| x(H,P)-\frac{2q}{(n-1)t} x(K_{n},P) \|_{2} \leq c.
$$
\end{theo}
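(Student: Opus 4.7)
The plan is to apply Theorem \ref{t213} with $G = K_n$ and $F$ a fixed $T$-factor of $K_n$; the theorem then produces the asserted bound $(m-1)2^{(m-2)/2}(qt)^m$ with $f/g = (qn/t)/\binom{n}{2} = 2q/((n-1)t)$ as required. All the work is in exhibiting a uniform cover of $(K_n,F)$ of width at most $qt$.

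I would split into two cases according to $N := n/t$. When $N\leq t$ (so $n\leq t^2$), let $\HH$ be the family of all $T$-factors of $K_n$. Edge-transitivity of $K_n$ gives $\HH$ uniform edge counts, and each member $F'$ has $|E(F')|=qN\leq qt$, so the width is at most $qt$. When $N>t$, fix $F=T_1\cup\cdots\cup T_N$ and combine two sub-families. The first is the pair-swap family $\HH_0$: for each pair $\{i,j\}$ of component indices and each $T$-factor $F'_{ij}$ of two components on the vertex set $V(T_i)\cup V(T_j)$, include the factor obtained from $F$ by replacing $T_i\cup T_j$ with $F'_{ij}$; each such member has width at most $2q$. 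The second is the transversal $t$-swap family $\HH^*$: for each $t$-subset $S\subseteq\{1,\ldots,N\}$ and each $T$-factor $F^*$ on $V(T_S):=\bigcup_{k\in S}V(T_k)$ each of whose $t$ components contains exactly one vertex from every $V(T_k),\,k\in S$, include the factor obtained from $F$ by replacing the $t$ components $\{T_k:k\in S\}$ with $F^*$; each such member has width at most $qt$.

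By the $\operatorname{Aut}(T)\wr S_2$ and $\operatorname{Aut}(T)\wr S_t$ symmetries respectively, both sub-families cover every edge of $F$ the same number of times, every within-component non-edge (case (a)) the same number of times, and every between-component non-edge (case (b)) the same number of times. Direct counting yields, for $\HH_0$, $B_a=(N-1)p$ and $B_b=p$ with some $p>0$, while for $\HH^*$ the transversal requirement forces $B_a(\HH^*)=0$ (a single transversal cannot contain two vertices of the same $V(T_k)$) and $B_b(\HH^*)>0$ (any between-component edge sits inside some transversal copy of $T$). Taking positive multiplicities $\alpha,\gamma$ with $\alpha(N-2)p=\gamma\,B_b(\HH^*)$ equalises $B_a$ and $B_b$, producing a uniform cover whose width is $\max(2q,qt)=qt$.

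The main obstacle is the counting for $\HH^*$ — in particular, showing that the number of transversal $T$-factors on $V(T_S)$ containing a given between-component edge is a positive constant depending only on $t$, $q$, and $|\operatorname{Aut}(T)|$, so that the resulting $c(t,q,m)$ is independent of $n$. Once this is verified, Theorem \ref{t213} applied with $s=qt$ delivers a $T$-factor $H$ in $K_n$ satisfying $\|x(H,P)-\tfrac{2q}{(n-1)t}x(K_n,P)\|_2\leq (m-1)2^{(m-2)/2}(qt)^m$, which is the claim.
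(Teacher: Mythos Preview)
Your approach is correct and essentially matches the paper's: both prove the theorem by constructing a uniform cover of width $qt$ for $(K_n,F)$ and then invoking Theorem~\ref{t213}, and your transversal family $\HH^*$ is exactly the paper's second family $\HH_2$. The only differences are that the paper uses single-component replacements (replace one $T_i$ by each of the $t!$ copies of $T$ on $V(T_i)$) in place of your pair-swaps $\HH_0$, and it does not separate out the case $N\le t$; your case split in fact patches the minor gap that the paper's $\HH_2$ is vacuous when $N<t$. Your stated ``main obstacle'' is not really one: uniformity and positivity of $B_b(\HH^*)$ follow immediately from the transitive action of $S_t\wr S_t$ on between-component edges, and since the bound in Theorem~\ref{t213} depends only on $m$ and the width $s=qt$, the constant $c(t,q,m)$ is automatically independent of $n$ once any uniform cover of width $qt$ exists.
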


\subsection{Proofs}

We start with the proof of Theorem \ref{t213}.

\begin{proof}
Let $P$ be a partition of the edges of $G$ into $m$ color 
classes $E_i$.
Put 
$$
y=(y_1,y_2, \ldots ,y_m)=\frac{f}{g}x(G,P).
$$
Let $H$ be a copy of $F$ in $G$ for which the quantity
$\|y-x\|_2^2 = \sum_{j=1}^m (y_i-x_i)^2$ is minimum where 
$x=(x_1, x_2, \ldots ,x_m)=x(H,P)$. 
Let $\HH$ be a uniform cover of width $s$ of the pair 
$(G,H)$. Suppose each edge of $H$ belongs to $a$ members of $\HH$
and each edge in $E(G)-E(H)$ belongs to $b>0$ such members.
For each member $H'$ of $\HH$, let $v_{H'}$ 
denote the vector of length $m$ defined as follows. For each
$1 \leq i \leq m$, coordinate number
$i$ of $v_{H'}$ is the number of edges in $E(H')-E(H)$ 
colored $i$ minus the number of edges in $E(H)-E(H')$ 
colored $i$. Note that the $\ell_1$-norm of
this vector is at most $2s$  and its sum of coordinates is $0$.
Therefore, its $\ell_2$-norm is at most $\sqrt {2s^2}$.
Note also that $x(H',P)=x(H,P) +v_{H'}$.

We claim that the sum $S$ of all $|\HH|$-vectors $v_{H'}$ for $H' \in
\HH$ is a positive multiple of the vector $(y-x)$. Indeed, each
edge in $E(G)-E(H)$ is covered by $b$ members of $\HH$, and each
edge of $E(H)$ is covered by $a$  members of $\HH$. In the sum
$S$ above this contributes to the coordinate corresponding to color
number $i$, $b$ times the number of edges of color $i$ in 
$E(G)-E(H)$ 
minus $(|\HH|-a)$ times the number of edges of color
$i$ in $H$. Equivalently, this is
$b$ times the number of all edges of $G$ colored $i$ minus
$(|\HH|+b-a)$ times the number of edges of $H$ colored $i$.
Since the sum of coordinates of each of the vectors
$v_{H'}$ is zero, so is the sum of coordinates of
$S$, implying that $bg=(|\HH|+b-a) f$, that is,
$|\HH|+b-a=\frac{g}{f}b$. Since $\frac{g}{f} y=x(G,P)$ 
this implies that 
$S=\frac{bg}{f}(y-x)$, proving the claim.

Since the vector $y-x$ is a linear combination
with positive coefficients of the 
vectors $v_{H'}$  it follows,
by Carath\'eodory's Theorem for cones,
that there exists a set $L$ of linearly independent vectors
$v_{H'}$ so that $y-x$ is
a linear combination with positive coefficients of them.
Indeed, starting
with the original expression of
$y-x$  mentioned above, as long as there is a linear dependence
among the vectors $v_{H'}$ participating in the combination
with nonzero (hence positive) coefficients, we can subtract an
appropriate multiple of this dependence and ensure that at least
one of the nonzero coefficients vanishes  and all others stay
non-negative (positive, after omitting all the ones with coefficient
$0$). As each vector $v_{H'}$ has $m$ coordinates and their sum is
$0$, it follows that $|L| \leq m-1$. 

We can now solve the system of linear equations $y-x=\sum z_{H'} v_{H'}$
with the variables $z_{H'}$ for $v_{H'} \in L$.
Note that it is enough to
consider any $|L|\leq m-1$ coordinates of $y-x$ and solve the system
corresponding to these coordinates. 
By Cramer's rule applied to this system each $z_{H'}$ is a ratio of
two determinants. The denominator is a determinant of a nonsingular matrix 
with integer coefficients, and its absolute value is thus at least $1$.
The numerator is also a determinant, and  
by Hadamard's Inequality its absolute value is at most the product of
the $\ell_2$-norms of the columns of the corresponding matrix. 
The norm of one column is at
most $\|y-x\|_2$ (this can be slightly improved by
selecting the $|L|$-coordinates with the smallest $\ell_2$-norm,
but we do not include this slight improvement here). Each other column
has norm at most $(2s^2)^{1/2}$. Therefore
each coefficient $z_{H'}$ satisfies
$0 \leq z_{H'} \leq \|y-x\|_2 (2s^2)^{(m-2)/2}.$
By taking the inner product with $y-x$ we get
$$
\|y-x\|_2^2 =\sum_{v_{H'} \in L} z_{H'} \langle y-x,v_{H'} \rangle
$$
$$
\leq \sum_{v_{H'} \in L, \langle y-x, v_{H'} \rangle >0}
z_{H'} \langle y-x,v_{H'} \rangle
$$
$$
\leq (m-1) \|y-x\|_2 (2s^2)^{(m-2)/2} \max \langle y-x,v_{H'} \rangle.
$$

Therefore, there  is a $v_{H'}$ so that
$$
\frac{\|y-x\|_2}{(m-1) (2s^2)^{(m-2)/2} }=
\frac{\|y-x\|_2^2}{(m-1) (2s^2)^{(m-2)/2} \|y-x\|_2} \leq
\langle y-x,v_{H'} \rangle,
$$
that is,
\begin{equation}
\label{e221}
\|y-x\|_2 \leq (m-1) (2s^2)^{(m-2)/2} \langle y-x,v_{H'} \rangle
=(m-1)2^{(m-2)/2} s^{m-2}  \langle y-x,v_{H'} \rangle.
\end{equation}
By the minimality of $\|y-x\|_2^2$ 
$$
\|x+v_{H'}-y\|_2^2 =\|x-y\|_2^2 -2 \langle y-x,v_{H'} \rangle
+\|v_{H'}\|_2^2 \geq \|x-y\|_2^2,
$$
implying that
$$
2 s^2 \geq \|v_{H'}\|_2^2 \geq 2 \langle y-x,v_{H'} \rangle.
$$
Plugging in (\ref{e221}) we get
$$
\|y-x\|_2 \leq (m-1)2^{(m-2)/2} s^{m},
$$
and the desired results follows since
$ \|y-x\|_{\infty} \leq \|y-x\|_2$.
\end{proof}

The assertions of Theorem \ref{t211} and Proposition \ref{p214}
follow easily from Theorem \ref{t213}. Indeed, as described above
there is a simple uniform cover of width $s=2$ for the pair
$(K_{n,n},M)$ where
$M$ is a perfect matching. There is also a similar uniform cover
$\HH$
of width $s=2$ for the pair $(K_n,C)$ where $C$ is a Hamilton
cycle. The $n(n-3)/2$ members of $\HH$ are all Hamilton cycles
obtained from $C$ by omitting two nonadjacent edges of it and by
adding the two edges that connect the resulting pair of paths to
a cycle.

To prove Theorem \ref{t215}
we need the following simple lemma.
\begin{lemma}
\label{l221}
Let $T$ be a fixed graph with $t$ vertices and $q$ edges, suppose
$t$ divides $n$ and let $H$ be a $T$-factor in $K_n$. Then there
is a uniform cover of width at most $qt$ of the pair $(K_n,H)$.
\end{lemma}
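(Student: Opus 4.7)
The plan is to exhibit an explicit family $\HH$ that is visibly a multiset of $T$-factors in $K_n$, each differing from $H$ in at most $qt$ edges, and to verify uniformity by a symmetry/double-counting argument. The building block is a ``$k$-wise swap'' for $k \le t$: fix a set $\{T_{i_1},\ldots,T_{i_k}\}$ of $k$ distinct copies of $T$ in $H$, and for every decomposition of the $kt$ vertices $V(T_{i_1})\cup\cdots\cup V(T_{i_k})$ into $k$ pairwise vertex-disjoint copies of $T$, replace $T_{i_1}\cup\cdots\cup T_{i_k}$ by this new decomposition and leave the remaining $m-k$ copies (where $m=n/t$) untouched. Only the $kq$ edges living inside the zone can change, so each such $T$-factor $H'$ satisfies $|E(H')\setminus E(H)|\le kq\le qt$, meeting the width condition.

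The first step is to show that for a fixed $k$-subset of copies, $S_{kt}$ acts on the $kt$ vertices of the zone and permutes the collection of $T$-factor decompositions of the zone, so by double counting (every such decomposition has $kq$ edges, spread among the $\binom{kt}{2}$ edge slots in the zone) every edge of $K_n$ with both endpoints in the zone lies in the same number $\alpha_k$ of these decompositions. Then, summing over the $\binom{m}{k}$ choices of zone, I would check that for any edge $e\in E(H)$ the count depends only on whether the copy $T_\ell\ni e$ is inside the zone (contributing $\alpha_k$) or outside (contributing the full number of zone-decompositions) — this already gives a common count on $E(H)$. A within-copy non-edge is counted $\binom{m-1}{k-1}\alpha_k$ times at level $k$, while a between-copy edge is counted $\binom{m-2}{k-2}\alpha_k$ times.

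The main obstacle, and the whole reason the width is allowed to be as large as $qt$ (not merely $2q$), is that pair-swaps ($k=2$) alone do \emph{not} equalize the counts on within-copy non-edges and between-copy edges. The plan is to take $\HH$ to be the union (with non-negative integer multiplicities $w_k$ to be chosen) of the $k$-wise swaps for $k=1,2,\ldots,t$, and to choose the $w_k$ so that
\[
\sum_{k=1}^{t} w_k \alpha_k \binom{m-1}{k-1} \;=\; \sum_{k=1}^{t} w_k \alpha_k \binom{m-2}{k-2},
\]
which is the uniformity constraint across $E(G)\setminus E(H)$. Since $\binom{m-1}{k-1}-\binom{m-2}{k-2}=\binom{m-2}{k-1}$, I need a non-negative solution with not all $w_k$ zero (and $B>0$). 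The hard part, and the one I expect to spend real effort on, is showing that by including enough types of modifications — not only the fully symmetric $k$-wise swaps described above but also, if needed, auxiliary moves such as single-vertex exchanges between two copies or cyclic shifts of one vertex through $t$ copies (both of width at most $qt$) — the resulting linear system admits such a solution, and exhibits the desired uniform cover.
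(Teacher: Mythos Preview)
Your framework is sound and your $k=1$ family is exactly the paper's $\HH_1$. The gap is in the second ingredient. As your own computation shows, the level-$k$ full swap covers each within-copy non-edge $\binom{m-1}{k-1}\alpha_k$ times and each between-copy edge $\binom{m-2}{k-2}\alpha_k$ times; since $\binom{m-1}{k-1}-\binom{m-2}{k-2}=\binom{m-2}{k-1}>0$ for every $1\le k\le t$ once $m=n/t>t$, and $\alpha_k>0$, \emph{every} non-negative combination of these families covers within-copy non-edges strictly more often than between-copy edges whenever $0<q<\binom{t}{2}$ and $n>t^2$. Your balancing equation therefore has no nontrivial non-negative solution in the range of $n$ that matters. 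The auxiliary moves you list do not escape this: a single-vertex exchange between two copies, or a cyclic shift of one vertex through $t$ copies, still leaves each modified copy with $t-1$ of its vertices inside a single original $T_i$, so these families again over-represent within-copy non-edges relative to between-copy ones.

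What is missing is a family that, among edges of $E(K_n)\setminus E(H)$, covers \emph{only} between-copy edges, giving the opposite imbalance. The paper obtains this by choosing $t$ copies $T_{i_1},\dots,T_{i_t}$ and replacing them not by an arbitrary $T$-factor on their $t^2$ vertices but by $t$ \emph{transversal} copies of $T$, each meeting every $V(T_{i_j})$ in exactly one vertex. Every edge of a transversal copy joins two distinct original $T_{i_j}$'s, so this family $\HH_2$ covers between-copy edges uniformly and positively and within-copy non-edges not at all. One integer combination of $\HH_1$ and $\HH_2$ is then immediately a uniform cover, of width $qt$ coming from $\HH_2$; no further moves or linear-algebra search is needed.
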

\begin{proof}
Let $H$ be a fixed $T$-factor in $K_n$, it consists of
$p=n/t$ (not necessarily connected) vertex disjoint copies of $T$
which we denote by $T_1,T_2, \ldots ,T_p$. Let $\HH_1$ be the set of
all copies  $H'$ of the $T$-factor obtained from $H$ by replacing one
the copies $T_i$
by another copy of $T$ on the same set of vertices, in all 
possible $t! $ ways. Note that if $T$ has a nontrivial automorphism
group some members of $\HH_1$ are identical, and $\HH_1$
is a multiset.
By symmetry it is clear that each edge
of $H$ belongs to the same number of members of $\HH_1$. Similarly,
each edge connecting two vertices of the same $T_i$ which does not
belong to $H$ lies in the same positive number of members of $\HH_1$. Beside
these two types of edges, no other edge of $K_n$ is covered by any
member of $\HH_1$. Let $\HH_2$ be the (multi)-set of all copies of
the $T$-factor obtained from $H$ by choosing, in all possible ways,
$t$  of the copies of $T$, say, $T_{i_1}, T_{i_2}, \ldots
,T_{i_t}$, removing them, and replacing them by all possible
placements of $t$ vertex disjoint copies of $T$ 
where each of the newly placed copies
contains exactly one vertex of each $T_{i_j}$. Again by symmetry it
is clear that each edge of $H$ belongs to the same number of
members of $\HH_2$. In addition, each
edge of $K_n$  connecting vertices from
distinct copies of $T$ in $H$
belongs to the same (positive) number of members of $\HH_2$. No
other edges of $K_n$ are covered by any $H' \in \HH_2$. 
It is now simple
to see that there are two integers $a$, $b$, so that the multiset
$\HH$ consisting of $a$ copies of each member of $\HH_1$ and $b$
copies of each member of $\HH_2$ is a uniform cover of the pair
$(K_n,H)$. The width of this cover is clearly $qt$, as every member
of $\HH_2$ contains $qt$ edges not in $E(H)$, and every member of
$\HH_1$ contains at most $2q$ edges not in $E(H)$. This completes
the proof.
\end{proof}
The assertion of Theorem \ref{t215} clearly follows from
the last Lemma together with Theorem \ref{t213}. 

\subsection{Concluding remarks and open problems}
\begin{itemize}
\item
The statement of Theorem \ref{t215} holds for any graph $H$
consisting of $n/t$ (not necessarily connected) 
vertex disjoint components, each having $t$ vertices
and $q$ edges. The proof applies with no need  to assume that all these
components are isomorphic.
\item
The proof of Theorem \ref{t213} is algorithmic in the sense that if
the cover $\HH$  is given then one can find, in time polynomial
in $n$ and $|\HH|$,
a copy $H$ of $F$ satisfying the conclusion. Indeed, the proof
implies that  as long as we have a copy $H$ for which
the conclusion does not hold, there is a member $H' \in \HH$ for
which $\| x(H',P) - \frac{f}{g} x(G,P) \|_2^2$  is strictly
smaller than $\| x(H,P) - \frac{f}{g} x(G,P) \|_2^2$. 
By checking all members of $\HH$ we can find an $H'$ for which 
this holds.
As both these
quantities are non-negative rational numbers smaller than
$n^4$ with denominator 
$g^2<n^4$, this process 
terminates in a polynomial number of steps. We make no attempt to 
optimize the number of steps here.
\item
The results can be extended to $r$-uniform hypergraphs by a
straightforward 
modification of the proofs.
\item
There are graphs $H$ for which no result 
like those proved above
holds when $G$ is either a complete or a complete bipartite
graph even if the number of colors is small. A simple example
is when $G=K_{2n}$, $H=K_{1,2n-1}$ and $m=3$. The edges of $K_{2n}$ can be
partitioned into two vertex disjoint 
copies of $K_n$ and a complete bipartite graph
$K_{n,n}$. For this partition, every copy of the star  $H$ misses
completely one of the color classes, although it's fair share in it
is roughly a quarter of its edges. More generally, let  $H$ be any
graph with a vertex cover of size smaller than $m-1$ (that is, $H$
contains a set of less than $m-1$ vertices touching all its edges).
Consider a partition of the edges of the complete graph $K_n$ into
$m-1$ pairwise vertex disjoint copies of the complete graph on
$\lfloor n/(m-1) \rfloor$ vertices, and an additional class
containing all the remaining  edges. Then any copy of $H$ in this
graph cannot contain edges of all those $m-1$ complete subgraphs,
as the edges of the copy can be covered by less than $m-1$ stars.
It is easy to see that a 
similar example exists for $G=K_{n,n}$ as well.
\item
The discussion here suggests the following conjecture.
\begin{conj}
\label{c231}
For every $d$ there exists a $c(d)$ so that for any graph $H$ with
at most $n$ vertices and maximum degree at most $d$ and for any
partition $P$ of the edges of $K_n$ into $m$ color classes, there is
a copy $H'$ of $H$ in $K_n$ so that 
$$
\| x(H',P)-\frac{|E(H)|}{E(K_n)|} x(K_{n},P) \|_{\infty} \leq
c(d).
$$
\end{conj}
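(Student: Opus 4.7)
The natural first attempt is to push the machinery of Theorem~\ref{t213} as far as possible. For a graph $H$ with maximum degree $d$ embedded in $K_n$, one obtains a uniform cover of width $s = O(d)$ from single-vertex relocations: for each embedded vertex $u$ and each $v \in V(K_n)$, swap their roles in the embedding, altering at most $O(d)$ edges. Applying Theorem~\ref{t213} to this cover yields a discrepancy bound of order $(m-1)\,2^{(m-2)/2}(Cd)^m$, which depends exponentially on $m$ and is therefore useless for the conjecture as stated.

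To remove the dependence on $m$, I would attempt a coarsen-then-refine strategy. First, group the $m$ color classes into $m_0 = m_0(d)$ super-classes of comparable total size and apply Theorem~\ref{t213} to the coarsened partition, obtaining a copy $H'$ whose representation at the super-class level has error $c_0(d)$. Second, perform composite local moves that are net-neutral across super-classes but redistribute edges nontrivially within a single super-class; if such moves form a uniform cover of the within-class subproblem, of width still bounded in terms of $d$, Theorem~\ref{t213} could be invoked again to refine each super-class individually. Iterating the recursion until every super-class is a singleton, and controlling constants carefully at each level, would in principle produce an absolute bound $c(d)$.

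The main obstacle is the construction and analysis of the within-class refinement. Composite swap moves that simultaneously preserve the aggregate count in each of $m_0$ super-classes require coordinating several single-vertex relocations at once, and such moves may fail to exist or fail to assemble into a genuine uniform cover once the super-classes become small and edges of different super-classes are tightly intertwined around individual vertices of $K_n$. A second, subtler difficulty is multiplicative error accumulation: if each refinement round inflates the discrepancy by a factor $\varphi(d)$, then after $\Theta(\log m)$ rounds the bound $\varphi(d)^{\Theta(\log m)}$ reintroduces a dependence on $m$. Overcoming these obstacles would likely require either a new convex-geometric argument that bypasses the $s^m$ factor intrinsic to the proof of Theorem~\ref{t213}, or a probabilistic/entropic approach exploiting concentration under the rich local modifications available for random embeddings of bounded-degree $H$.
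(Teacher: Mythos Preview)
The statement you are addressing is Conjecture~\ref{c231}, which the paper explicitly leaves open; there is no proof in the paper to compare against. So the appropriate evaluation is whether your write-up constitutes a proof, and it does not --- as you yourself recognise. Your text is a research outline, not an argument: the first paragraph correctly observes that a direct application of Theorem~\ref{t213} gives a bound depending on $m$, and the remaining paragraphs sketch a coarsen-then-refine recursion while candidly listing the obstructions (existence of super-class-neutral composite moves forming a uniform cover, and multiplicative error growth over $\Theta(\log m)$ rounds). Neither obstruction is resolved.

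A further gap worth flagging in your first step: single-vertex relocations do \emph{not} in general yield a uniform cover in the sense of Definition~\ref{d221} for an arbitrary bounded-degree $H$. Uniformity requires every edge of $E(K_n)\setminus E(H')$ to be covered the same number of times, but the number of swaps producing a given non-edge $ab$ depends on the $H$-degrees of $a$ and $b$ in the current embedding, which vary unless $H$ is regular. So even the ``useless'' $m$-dependent bound via Theorem~\ref{t213} is not immediately available for general $H$; one would first have to symmetrise the family of moves. In short: the conjecture remains open, and your proposal is an honest sketch of why the paper's machinery, as it stands, does not reach it.
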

The analogous conjecture for bipartite bounded-degree graphs $H$
with at most $n$ vertices in each color class and for partitions of
the edges of $K_{n,n}$ is also plausible. Note that the conjecture
asserts that the same error term $c(d)$ should hold for any number of
colors $m$. Note also that $c(d)$ must be at least $\Omega(d)$ as shown by
the example of a star $H=K_{1,d}$ and the edge-coloring of $K_{2n}$ with 
$m=3$ colors described above.
\end{itemize}

\section{The choice number of complete multipartite graphs with
equal color classes}
The choice number  of a graph $G$ is the 
smallest integer $s$ so that for any
assignment of a list of $s$ colors to each vertex of $G$
there is a proper coloring of $G$ assigning to each vertex a color
from its list. This notion was introduced in \cite{Vi}, \cite{ERT}. 
Let $K_{m*k}$ denote the complete $k$-partite graph with 
$k$ color classes, each of size $m$. Several researchers
investigated the choice number
$ch(K_{m*k})$ of this graph.  Trivially $ch(K_{1*k})=1$ as
$K_{1*k}$ is a $k$-clique. In \cite{ERT} it is proved that
$ch(K_{2*k})=k$. Kierstead  \cite{Ki} proved that
$ch(K_{3*k})=\lceil (4k-1)/3 \rceil$ and
in \cite{KSW} it is proved that
$ch(K_{4*k})= \lceil (3k-1)/2 \rceil.$

In \cite{ERT} it is shown that as $m$ tends to infinity
$ch(K_{m*2})=(1+o(1))\log_2 m$. In \cite{Al} the author shows
that there are absolute constants $c_1,c_2>0$ so that
$c_1 k \ln m \leq ch(K_{m*k}) \leq c_2 k \ln m$ for all $m$ and
$k$. In \cite{GK} it is proved that for fixed $k$,
as $m$ tends to infinity, $ch(K_{m*k})=(1+o(1)) \frac{\ln m}{\ln
(k/(k-1)}$ and in  \cite{Sh} it is proved that if both
$m$ and $k$ tend to infinity and $\ln k =o(\ln m)$ then
$ch(K_{m*k})=(1+o(1)) k \ln m$. Our first result here is that 
the assumption
that $\ln k =o(\ln m)$ can be omitted, obtaining the asymptotics of
$ch(K_{m*k})$ when $m$ and $k$ tend to infinity (with no assumption
on the relation between them).
\begin{theo}
\label{t311}
If $m$ and $k$ tend to infinity then
$$
ch(K_{m*k})=(1+o(1))k \ln m.
$$
\end{theo}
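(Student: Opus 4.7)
The result is a matching $(1+o(1))k\ln m$ upper and lower bound.

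For the lower bound $ch(K_{m*k}) \geq (1-o(1))k\ln m$, I would adapt the random-list construction in the spirit of \cite{GK} and \cite{Sh}, extending it to cover all joint limits $m, k \to \infty$. Pick a suitable color universe $[N]$, give each $L(v)$ an independent uniform random $s$-subset of $[N]$ with $s = (1-\eps)k\ln m$, and bound the expected number of valid $\pi : [N] \to [k]$ (the color-to-part assignment encoding a proper list coloring, satisfying $L(v) \cap \pi^{-1}(i) \neq \emptyset$ for all $v \in V_i$). By symmetry the per-$\pi$ success probability is maximized at $\pi$ with balanced fibers of size $N/k$, yielding a bound of the form $\exp(-(1-o(1))km^\eps)$. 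A union bound over the $k^N$ candidates, with $N$ of order $km^\eps/\ln k$, forces the expected number below $1$ in the regime $\ln m\cdot \ln k = O(m^\eps)$; for larger $k$ relative to $m$, a modified construction with $N$ and the list structure tuned to the exact rates of $m, k$ will be required.

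For the upper bound $ch(K_{m*k}) \leq (1+o(1))k\ln m$, set $s = \lceil(1+\eps)k\ln m\rceil$, let $U = \bigcup_v L(v)$, and consider the baseline random strategy from \cite{Sh}: sample $\sigma : U \to [k]$ uniformly at random with independent coordinates, and color each $v \in V_i$ by an element of $L(v) \cap \sigma^{-1}(i)$. A vertex is uncovered with probability $(1-1/k)^s \leq m^{-(1+\eps)}$, so the union bound over the $mk$ vertices gives failure probability $\leq km^{-\eps}$, which is $o(1)$ exactly in Shen's regime $\ln k = o(\ln m)$.

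To extend to arbitrary $k$, my plan is a hierarchical random allocation. Partition $[k]$ into $K = \lceil k/k'\rceil$ blocks of size $k' = \lfloor m^{\eps/8}\rfloor$, and assign each color of $U$ independently and uniformly to one of the $K$ blocks. Within a block, the restricted list at $v$ has expected size $s/K = (1+\eps)k'\ln m$, and by Chernoff is at least $(1+\eps/2)k'\ln m$ with probability $1 - \exp(-\Omega(\eps^2 k'\ln m))$; a union bound over the $mk$ vertices keeps this uniform whenever $k'\ln m$ dominates $\ln(mk)$. Each block is then a sub-instance on its private palette with $k'$ parts, and since $k' = m^{\eps/8}$ it is solvable by the baseline argument. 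This already covers all $k$ up to roughly $\exp(c(\eps) m^{\eps/8}\ln m)$. For more extreme regimes (e.g., $k = 2^{\Omega(m)}$), the decomposition must be iterated $O(\log^{*} k)$ times, with per-level slack $\delta_\ell$ chosen so that $\sum_\ell \delta_\ell < \eps/2$. The main technical obstacle is maintaining Chernoff concentration at every level while keeping the accumulated multiplicative slack $1+o(1)$; this requires a careful schedule of block sizes and per-level slacks that works uniformly over how $m, k\to\infty$.
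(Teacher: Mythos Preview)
Your proposal correctly identifies the baseline arguments (random color-to-part assignment for the upper bound, random lists for the lower bound) and correctly pinpoints where each breaks down as $k$ grows relative to $m$, but the extensions you sketch to cover all regimes are incomplete in both directions.

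For the lower bound when $k$ is large, you write only that ``a modified construction with $N$ and the list structure tuned to the exact rates of $m,k$ will be required,'' without saying what it is. This is the real gap. The paper's solution is a reduction rather than a new random construction: first establish the bound for $k'=\ln m$ parts (which falls in the easy regime $k'\le m$), using random lists of size $(1-o(1))k'\ln m$ drawn from a universe $S$ of size $k'(\ln m)^2$ and showing that no subset of size $|S|/k'$ meets all lists. Then, for arbitrary $k$, \emph{blow up} each color of that construction into a block of roughly $k/k'$ fresh colors. This mechanically converts a system witnessing $ch(K_{m*k'})>(1-o(1))k'\ln m$ into one witnessing $ch(K_{m*k})>(1-o(1))k\ln m$, with no further probabilistic analysis and no dependence on how fast $k$ grows. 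Your union bound over all $\pi:[N]\to[k]$ has $k^N$ terms, and once $\ln k$ becomes comparable to the per-$\pi$ exponent no choice of $N$ rescues it; the blow-up trick sidesteps this entirely.

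For the upper bound, your hierarchical block decomposition with $O(\log^* k)$ levels may be salvageable, but you yourself flag the ``main technical obstacle'' of controlling the cumulative slack uniformly and do not resolve it. The paper avoids iteration altogether with a \emph{reserve colors} trick: put each color into a reserve set $R$ independently with probability $\Theta(1/\ln m)$, so that with positive probability every list retains between $k$ and $20k$ reserve colors. Run the random partition on the non-reserve colors; the \emph{expected} number of uncolored vertices is at most $k$ (no union bound over vertices is needed), so some partition leaves at most $k$ uncolored, and these are then colored greedily from their $\ge k$ reserve colors. This handles all $k>10\ln m$ in a single step and gives the explicit bound $ch(K_{m*k})\le k(\ln m+\ln\ln m+20)$.
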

The proof is probabilistic, similar to the one in \cite{Al}, 
where the main additional argument is in the proof of the upper
bound for values of $k$ which are much bigger than $m$. 

Our second result is the following.
\begin{theo}
\label{t312}
For any fixed integer $m \geq 1$ the limit
$$
\lim_{k \rightarrow \infty} \frac{ch(K_{m*k})}{k}
$$
exists (and is $\Theta(\ln m)$).
\end{theo}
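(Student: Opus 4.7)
My plan is to establish the existence of the limit via Fekete's subadditivity lemma, the key input being an approximate subadditivity of the form
\[
ch(K_{m*(k_1+k_2)}) \leq ch(K_{m*k_1}) + ch(K_{m*k_2}) + C(m)
\]
for some constant $C(m)$ depending only on $m$. Once this is in hand, the sequence $g(k):=ch(K_{m*k})+C(m)$ is subadditive in $k$, so by Fekete's lemma $g(k)/k$ converges to $\inf_k g(k)/k$; since $C(m)/k\to 0$, the limit of $ch(K_{m*k})/k$ exists and equals the same value. The rate $\Theta(\ln m)$ is then immediate from the bounds $c_1 k\ln m\le ch(K_{m*k})\le c_2 k\ln m$ of \cite{Al}. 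Note that the additive constant $C(m)$ must really be independent of $k_1,k_2$: iterating the inequality $q\approx k/k^\ast$ times contributes a total error $qC(m)$, and only a $k$-free $C(m)$ makes this error asymptotically negligible.

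To prove the subadditivity, I would take any list assignment $L$ with $|L(v)|=ch(K_{m*k_1})+ch(K_{m*k_2})+C(m)$ on $K_{m*(k_1+k_2)}$ and split the classes into a left block $V_L$ (the first $k_1$ classes) and a right block $V_R$ (the last $k_2$ classes). The goal is to partition the color universe $\bigcup_v L(v)$ into two disjoint pieces $T_L, T_R$ so that $|L(v)\cap T_L|\geq ch(K_{m*k_1})$ for every $v\in V_L$ and $|L(v)\cap T_R|\geq ch(K_{m*k_2})$ for every $v\in V_R$. Granted such a partition, the induced list assignments on the two halves each admit a proper list coloring by the very definition of the choice number; since $T_L$ and $T_R$ are disjoint, the two colorings use disjoint palettes and combine into a proper coloring of $K_{m*(k_1+k_2)}$.

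The chief obstacle is constructing this color partition with a slack depending only on $m$. Placing each color in $T_L$ independently with probability $p=ch(K_{m*k_1})/(ch(K_{m*k_1})+ch(K_{m*k_2}))$ satisfies both inequalities in expectation, and Chernoff plus a union bound over the $m(k_1+k_2)$ vertices would yield the inequalities with slack $C=O(\sqrt{(k_1+k_2)\log(k_1+k_2)})$; but this grows with $k_1+k_2$ and is not a constant. To obtain a genuinely $k$-free $C(m)$, the argument must exploit the fact that each of the $k_1+k_2$ classes has only $m$ vertices, so only $m$ list-constraints are active per class. A plausible route is to start from a nearly-balanced random partition and then perform a class-by-class correction, adjusting at most $O(m)$ colors per class via a Hall-type or LP-rounding step, so that the total deviation from the target sizes is localised and does not accumulate with the number of classes. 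Making this localisation rigorous -- so that the final $C(m)$ depends only on $m$ -- is the heart of the proof; the subsequent Fekete deduction and the identification of the $\Theta(\ln m)$ rate are then routine.
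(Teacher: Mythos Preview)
Your outline has a genuine gap at exactly the point you yourself flag as ``the heart of the proof.'' You need
\[
ch(K_{m*(k_1+k_2)}) \leq ch(K_{m*k_1}) + ch(K_{m*k_2}) + C(m)
\]
with $C(m)$ independent of $k_1,k_2$, but you never prove it. The random split plus Chernoff gives slack $O\bigl(\sqrt{(k_1+k_2)\log(m(k_1+k_2))}\bigr)$, and your proposed fix---``class-by-class corrections, adjusting $O(m)$ colors per class''---does not work as stated: colours are global objects shared across all $k_1+k_2$ classes, so moving a colour between $T_L$ and $T_R$ to repair one vertex can simultaneously damage vertices in many other classes. There is no localisation to exploit, and no Hall or LP argument you cite actually delivers a $k$-free constant. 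As written, the proposal is a restatement of the difficulty rather than a resolution of it. (The paper in fact remarks that exact subadditivity fails for large $m$, so some additive defect is genuinely necessary; whether a constant $C(m)$ suffices is not established anywhere in your argument.)

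The paper avoids this obstacle entirely by proving a different inequality. Rather than additive near-subadditivity, it shows a multiplicative bound of the shape
\[
ch(K_{m*kt}) \leq \bigl[(s^{1/3}+3)\,t^{1/3}-3\bigr]^3, \qquad s=ch(K_{m*k}),
\]
by induction on $t$, halving $t$ at each step and splitting the colour universe with probabilities $\lfloor t/2\rfloor/t$ and $\lceil t/2\rceil/t$. The point of the cubic parameterisation is that it tolerates a deviation of order $(st)^{2/3}$ per vertex in the random split, which is far more than the $\sqrt{st\log(st)}$ needed for a union bound over the $mkt<(st)^2$ vertices; thus ordinary Chernoff suffices with room to spare, and no delicate correction step is required. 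One then fixes $\eps>0$, picks $k$ large with $ch(K_{m*k})/k\leq q+\eps$ where $q=\liminf ch(K_{m*k})/k$, and the displayed bound gives $ch(K_{m*kt})/(kt)\leq (q+\eps)(1+\eps)$ for all $t$, forcing $\limsup\leq q$. The $\Theta(\ln m)$ identification is, as you say, immediate from \cite{Al}.
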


For $m \geq 1$, let $c(m)$ denote the above limit.
By the known results stated above
$c(1)=c(2)=1$, $c(3)=4/3$, $c(4)=3/2$ and
$c(m)=(1+o(1)) \ln m$. The problem of determining $c(m)$ precisely
for every $m$ seems very difficult.

We prove Theorem \ref{t311} without trying to optimize the error terms.
To simplify the presentation, we omit all floor and ceiling signs
whenever these are not crucial.

\subsection{The upper bound}

\begin{prop}
\label{p321}
For every $m,k \geq 2$
$$
ch(K_{m*k}) \leq k (\ln m + \ln \ln m+20).
$$
\end{prop}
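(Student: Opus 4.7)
The proof is probabilistic. Set $s := k(\ln m + \ln\ln m + 20)$ and assume WLOG that every vertex $v$ has a list $L(v)$ of size exactly $s$, drawn from some ground set $C$. The main construction is the standard random partition of colors: for each $c \in C$, independently draw a target part $\phi(c) \in [k]$ uniformly. Call $v \in V_i$ \emph{satisfied} if $L(v) \cap \phi^{-1}(i) \neq \emptyset$. If every vertex is satisfied, picking any color in $L(v) \cap \phi^{-1}(\mathrm{part}(v))$ for each $v$ produces a proper list coloring, because a color is used only within its unique target part, so distinct parts are assigned disjoint color palettes.

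For a single vertex, $\Pr[v \text{ unsatisfied}] = (1-1/k)^s \le e^{-s/k} = 1/(e^{20} m \ln m)$. A direct union bound over the $mk$ vertices gives $\Pr[\exists \text{ unsatisfied } v] \le k/(e^{20}\ln m)$, which is less than $1$ as soon as $k \le e^{20} \ln m$. This resolves the moderate-$k$ regime without any further work.

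The main obstacle --- the ``main additional argument'' the text flags --- is the regime $k \gg \ln m$, where the factor $k$ in the union bound is no longer absorbed by the $e^{-s/k}$ savings. I would attack it via the Lov\'asz Local Lemma: the event $A_v := \{v\text{ unsatisfied}\}$ depends only on $\phi|_{L(v)}$, hence is mutually independent of every $A_u$ with $L(u) \cap L(v) = \emptyset$. If the list-overlap degree $D := \max_v |\{u : L(u) \cap L(v) \neq \emptyset\}|$ is at most $e^{19} m \ln m$, the symmetric LLL condition $eD\cdot(1-1/k)^s \le 1$ holds and a good $\phi$ exists. To enforce this bound on $D$ when the raw instance has large overlap, I would preprocess greedily: while some color $c$ appears in at least $m$ lists of vertices inside a common part $V_i$, reserve $c$ to $V_i$, color those vertices by $c$, and delete $c$ from every surviving list. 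Each step either retires a part entirely or drops the popularity of $c$ by a controlled amount, while shrinking other lists by at most one. The slack $\ln\ln m + 20$ over the naive $\ln m$ is precisely the budget for these deletions.

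The technical difficulty I expect is the combined bookkeeping of the preprocessing/LLL hand-off: guaranteeing simultaneously that after preprocessing (i) every surviving list still has size at least $k(\ln m + O(1))$, (ii) the residual list-overlap degree has dropped to $O(m \ln m)$ so that LLL applies, and (iii) any parts truncated to fewer than $m$ vertices can be finished by a trivial greedy coloring. These three conditions trade off against each other, which is why the additive constant in the statement is the loose ``$20$'' rather than an optimized value.
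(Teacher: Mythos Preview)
Your Case~1 (random partition plus union bound when $k \le e^{20}\ln m$) is exactly the paper's argument for its Case~1, with a different threshold constant.

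The gap is in Case~2. Your preprocessing only removes a color $c$ when it appears in at least $m$ lists \emph{within a single part}; since each part has exactly $m$ vertices, this just says ``if $c$ lies in every list of some part $V_i$, color $V_i$ by $c$''. After this, a color can still appear in $m-1$ lists in each of the $k$ parts, hence in $(m-1)k$ lists overall, and a vertex with $s\approx k\ln m$ colors can overlap with $\Theta(smk)=\Theta(mk^2\ln m)$ other lists. The LLL condition $eD\cdot(1-1/k)^s\le 1$ then requires $D\lesssim e^{20}m\ln m$, which fails once $k$ is large. More generally, any ``remove colors appearing in $\ge X$ lists'' scheme gives $D\le sX$ and needs $X\lesssim m\ln m/s\approx m/k$; for $k\gg m$ this forces $X<1$, so every color would have to be removed. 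The adversary controls the overlap structure of the lists completely, and LLL seems to be a dead end in this regime.

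The paper's Case~2 avoids LLL with a short reserve-colors trick. Choose a random reserve set $R$ by placing each color in $R$ independently with probability $p=10/(\ln m+20)$; Chernoff gives that with positive probability $k\le |L(v)\cap R|\le 20k$ for every vertex $v$, and one fixes such an $R$. Now run the random partition $\phi$ on the \emph{non-reserve} colors only. Each list still has at least $s-20k=k\ln m$ non-reserve colors, so $\Pr[v\text{ unsatisfied}]\le (1-1/k)^{k\ln m}\le 1/m$, and the \emph{expected} number of unsatisfied vertices is at most $mk\cdot(1/m)=k$. Fix a $\phi$ achieving at most $k$ failures. These at most $k$ leftover vertices are then colored greedily from their reserve colors: each has at least $k$ reserve colors, so it can avoid the reserve colors already used on earlier leftovers. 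Reserve and non-reserve colors are disjoint, so there is no conflict with the already-satisfied vertices. No dependency-graph bookkeeping is needed.
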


\noindent
{\bf Proof:}\, 
Since $\ln m + \ln \ln m +20  \geq 20$  for all $m \geq 2$ 
we may and will assume that $m > 20$.  We consider two possible cases.
\vspace{0.2cm}

\noindent
{\bf Case 1:}\, $k \leq 10 \ln m$.

In this case we show that lists of size 
$s=k(\ln m + \ln \ln m +3)$ suffice. Let $G=K_{m*k}=(V,E)$, and
suppose we  assign a list $S_v$ of  colors to each vertex $v \in
V$, where $|S_v|=s$ for all $v$. Let $S=\cup_{v \in V} S_v$
be the union of all lists. Let $S=T_1 \cup T_2 \ldots \cup T_k$ be
a random partition of all colors in $S$ into $k$ pairwise disjoint
subsets, where each color $x \in S$ is assigned, randomly, uniformly
and independently, to one of the subsets $T_j$. We obtain a proper
coloring of $G$ by coloring each vertex $v$ that lies in color
class number $j$ by a color from $S_v \cap T_j$. Clearly, if there is
indeed such a color for each vertex, then the resulting coloring
is  proper. The probability that for a fixed vertex $v$ the
above fails is exactly
$$
(1-\frac{1}{k})^{|S_v|} \leq e^{-(\ln m + \ln \ln m +3)}
<\frac{1}{e^3 m \ln m } < \frac{1}{km}.
$$
As there are $mk$ vertices, 
the probability that there is a vertex  for which the
above fails is smaller than $1$, completing the proof in this
case.
\vspace{0.2cm}

\noindent
{\bf Case 2:}\ $ k > 10 \ln m$. 

Note that since by assumption 
$m>20$ this implies that $k \geq 30$.
In this case we show that lists of size $s=k(\ln m+20)$ suffice.
Let  $G=K_{m*k}=(V,E)$, and
suppose we assign a list $S_v$ of  $s$ colors to each vertex $v \in
V$. As before, let $S=\cup_{v \in V} S_v$
be the union of all lists. Our strategy now is to first define a
set of reserve colors $R$, these colors will be used to
assign colors to the vertices that will not  be colored 
by the procedure applied in Case 1. Let $R$ be a random subset of
$S$ obtained by picking each color in $S$ to lie in $R$
with probability $p=\frac{10}{\ln m + 20}$, where all choices are
independent. For a fixed vertex $v$, the random variable
$|S_v \cap R|$ is a Binomial random variable with expectation
$sp=10k$. By the standard estimates for Binomial distributions
(see, e.g., \cite{AS}, appendix A, Theorems A.1.11 and A.1.13),
the probability that this random variable is smaller than $k$ is
less than $e^{-10k/8}$ and the probability it is larger than 
$20k$ is less than $e^{-10k/14}$. Thus the probability it is not
between $k$ and $20k$  is less than
$$
2e^{-10k/14}<2e^{-k/3} e^{-k/3} < 2\frac{1}{2k}\frac{1}{m^3}
< \frac{1}{mk},
$$
where here we used the fact that $k \geq 10 \ln m$ to conclude
that $e^{-k/3} < \frac{1}{m^3}$ and the fact that $k \geq 30$ to
conclude that $e^{-k/3} < \frac{1}{2k}.$
It follows that with positive probability
$k \leq |S_v \cap R| \leq 20 k$ for every vertex $v \in V$. 
Fix a set of colors $R$ for which this holds. Now proceed as 
in Case 1.
Let $S-R=T_1 \cup T_2 \ldots \cup T_k$ be
a random partition of all colors in $S-R$ into $k$ pairwise disjoint
subsets, where each color in $S-R$ is assigned, randomly, uniformly
and independently to one of the subsets $T_j$. If a vertex 
$v$ of $G$ lies in color class
number $j$, and $S_v \cap T_j \neq \emptyset$, then color it by an
arbitrary color in this intersection $S_v \cap T_j$.
The probability that $v$ fails to have such a color is
$$
(1-\frac{1}{k})^{|S_v-R|} \leq (1-\frac{1}{k})^{k \ln m} \leq
\frac{1}{m},
$$
where here we used the fact that $|S_v \cap R| \leq 20k$ for all
$v$.
By linearity of expectation, the  expected number of uncolored
vertices at this stage is at most $k$, hence we can fix a
splitting $T_1,\cdots, T_k$ as above  so that there are at most
$k$ uncolored vertices. But now we can color these vertices one by
one using the reserve colors. Since for each such vertex $u$,
$|S_u \cap R| \geq k$, each of these vertices has at least
$k$ colors of $R$ in its list and thus we will be able to assign to
it a color that differs from all colors of $R$ assigned to
previous vertices. This completes the proof of the upper bound.
\hfill $\Box$

\subsection{The lower bound}
The proof of the lower bound is  essentially the one in 
\cite{Al}, with a more careful computation and choice of parameters. 
For completeness, we sketch the details. 
\begin{prop}
\label{p322}
There exists an $m_0$ so that for all $m > m_0$ and every $k$
$ch(K_{k*m}) > t$ where
$$
t=(k-1-\frac{k}{\ln m}) (\ln m - 4 \ln \ln m) 
(1-\frac{\ln m}{m})~ (~=(1+o(1)) k \ln m),
$$
where the $o(1)$-term tends to zero as $m$ and $k$ tend to
infinity.
\end{prop}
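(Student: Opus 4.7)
The proof is a direct probabilistic construction, in the spirit of \cite{Al}. Take a ground palette $[n]$ with $n$ slightly larger than $t$, and assign to each vertex $v$ of $K_{k*m}$ an independent uniformly random list $L(v)\in\binom{[n]}{t}$. The aim is to show, via a union bound, that with positive probability no proper list coloring exists, which establishes $ch(K_{k*m})>t$.

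The key observation is that a proper list coloring exists iff there is a \emph{color-to-part} function $\phi:[n]\to[k]$ such that $L(v)\cap\phi^{-1}(i)\neq\emptyset$ for every $i\in[k]$ and every $v\in V_i$ (the $i$-th part of $K_{k*m}$); indeed, in a complete multipartite graph the color sets used on distinct parts must be pairwise disjoint, and one sets $\phi(c)=i$ whenever color $c$ is used in $V_i$. For a fixed $\phi$ with part sizes $a_i=|\phi^{-1}(i)|$ and $q_i=\binom{n-a_i}{t}/\binom{n}{t}$, independence of the lists gives
\[
\Pr[\phi\text{ works}]=\prod_{i=1}^k(1-q_i)^m\le\exp\Bigl(-m\sum_{i=1}^k q_i\Bigr).
\]
Since $a\mapsto\binom{n-a}{t}$ is convex in $a$, $\sum_i q_i$ is minimized on the balanced partition $a_i=n/k$, yielding $\sum_i q_i\ge kq$ where $q:=\binom{n(1-1/k)}{t}/\binom{n}{t}$. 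Union-bounding over the $k^n$ choices of $\phi$ then gives
\[
\Pr[\text{a proper list coloring of } K_{k*m} \text{ exists}]\le k^n e^{-mkq},
\]
which is less than $1$ provided $n\ln k<mkq$.

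It remains to choose parameters so that the last inequality holds with $t$ as in the proposition. Pick $n$ just large enough that $q\ge(1-1/k)^t(1-\ln m/m)$; this absorbs the approximation error between $q$ and the ``infinite-palette'' quantity $(1-1/k)^t$ and is the source of the multiplicative $(1-\ln m/m)$ factor in the statement. Writing the leading part of $t$ as $(k-1)(\ln m-\alpha)$ and using the elementary inequality $(1-1/k)^{k-1}\ge 1/e$, one obtains $(1-1/k)^t\ge e^\alpha/m$, so $mkq\ge k e^\alpha(1-o(1))$, and the required inequality reduces to $k e^\alpha>n\ln k\sim(k\ln m)(\ln k)$, that is, $\alpha>\ln\ln m+\ln\ln k$. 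Taking $\alpha=4\ln\ln m$ is comfortable whenever $\ln k\le(\ln m)^3$; the additional $-k/\ln m$ correction in the first factor of the proposition's $t$ supplies a further safety margin (effectively subtracting $k$ from $t$) that absorbs the $n\ln k$ term in broader regimes of $k$. The main obstacle is bookkeeping these approximation errors precisely enough to hit the exact form of $t$ in the statement, and verifying that the argument is uniform over all $k$, including the regime where $k$ is much larger than any fixed polynomial in $\ln m$.
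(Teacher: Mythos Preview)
Your probabilistic strategy is sound for moderate $k$, but two steps do not go through as written. First, the palette cannot be taken ``slightly larger than $t$''. With $n=Ct$ for any fixed $C>1$ one has
\[
\frac{q}{(1-1/k)^t}=\prod_{j=0}^{t-1}\Bigl(1-\frac{j}{(k-1)(n-j)}\Bigr)\le\exp\Bigl(-\frac{t(t-1)}{2(k-1)n}\Bigr)\le m^{-1/(2C)+o(1)},
\]
so the estimate $q\ge(1-1/k)^t(1-\ln m/m)$ is off by a full power of $m$. The workable scale is $n=k(\ln m)^2\sim t\ln m$, which is exactly the palette the paper uses; with it one gets $q\ge(\ln m)^4/m$, and the condition $n\ln k<mkq$ becomes $\ln k<(\ln m)^2$, covering $k\le m$ but certainly not all $k$. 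Second, and this is the main gap, the direct union bound cannot reach arbitrarily large $k$: the correction $-k/\ln m$ in the first factor of $t$ subtracts only about $k$ from $t$, hence multiplies $(1-1/k)^t$ by at most a bounded constant (roughly $e$), which does nothing to absorb $n\ln k$ once $\ln k$ exceeds $(\ln m)^2$.

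The paper handles $k>m$ by a separate reduction rather than a direct argument: it applies its Case~1 construction with $k'=\ln m$ parts and then replaces each color by a block of $\lfloor k/k'\rfloor$ new colors, producing lists of size $\approx k\ln m$ over a palette of size $k''(\ln m)^2$ (with $k''=k'\lfloor k/k'\rfloor$) for which no $(1/k'')$-fraction of the palette hits every list. The factor $(1-\ln m/m)$ in the statement comes from the rounding $k''\ge k(1-\ln m/m)$ in this blow-up, not from the approximation of $q$. Incidentally, in Case~1 the paper also uses a cleaner device than your union over all $k^n$ maps $\phi$: it assigns the \emph{same} $m$ random lists to every part and, via pigeonhole, union-bounds only over the $\binom{k(\ln m)^2}{(\ln m)^2}\le(ek)^{(\ln m)^2}$ candidate transversals of size $n/k$. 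Your outline becomes a correct proof once you fix the palette size and supply such a reduction for large $k$.
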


\noindent
{\bf Proof:}\,
We consider two possible cases.
\vspace{0.2cm}

\noindent
{\bf Case 1:}\,  $k \leq m $.

In this case  we prove that $ch(K_{k*m})>s$, where
$$
s= (k-1-\frac{k}{\ln m}) (\ln m - 4 \ln \ln m) ~ (~=(1+o(1)) k \ln m).
$$
Let $S$ be a set of $k (\ln m)^2$ colors, and let
$S_1,S_2, \ldots ,S_m$ be $m$ random subsets of $S$, each chosen
independently and uniformly among all subsets of cardinality
$s$ of $S$, where $s$ is as above. We claim  that with positive
probability there is no subset of $S$ of cardinality 
at most  $|S|/k = (\ln m)^2$ that intersects all 
subsets $S_i$. This claim suffices to prove the assertion of the
proposition in this case. Indeed, we simply assign the $m$ vertices
in each color class of $G$ the $m$ lists $S_i$. If there would have
been a proper coloring of $G$ assigning to each vertex a color 
from its list, then the set of all colors assigned to  vertices 
in one of the color classes of $G$ must be of size at most $|S|/k$ and
it must intersect all
lists $S_i$, contradiction. It thus suffices to prove the claim.

Fix a set $T$ of $(\ln m)^2$ colors. The probability that a random
subset of size $s$ of $S$ does not intersect $T$ is
$$
\frac{{{|S|-|T|} \choose s}}{{{|S|} \choose s}}
=\frac{{{(k-1) \ln^2 m} \choose s}}{{{k \ln^2 m } \choose s}}.
$$
This quantity is at least
$$
(\frac{(k-1) \ln^2 m - k \ln m}{k \ln^2 m- k \ln m}) ^s
=(1-\frac{1}{k(1-1/\ln m)})^{[k(1-\frac{1}{\ln m})-1](\ln m -4 \ln
\ln m)}
$$
$$
\geq (\frac{1}{e})^{\ln m-4 \ln \ln m} =\frac{\ln^4 m}{m},
$$
where here we used the fact that for every $q>1$, $(1-1/q)^{q-1}
\geq \frac{1}{e}$.
Therefore, the probability that none of the $m$
random sets $S_i$ misses $T$ is at most
$$
(1-\frac{\ln^{4} m}{m})^m < e^{-\ln^{4}m}.
$$
As the number of choices for $T$ is only
$$
{{k \ln^2 m} \choose {\ln^2 m}} \leq (ek)^{\ln^2 m}
 \leq e^{(1+o(1))\ln^3 m},
$$
where here we used the assumption that $k \leq m$, the desired claim
follows, completing the proof of Case 1.
\vspace{0.2cm}

\noindent
{\bf Case 2:}\, $k \geq m$.

In this case, take first the previous construction with $m$ and
$k'=\ln m$. Replace $k$ by the largest integer $k''$ which is
at most $k$ and is divisible by $k'$, that is: 
$k''=k'\lfloor k/k' \rfloor$. Note that as $k \geq m$ and $k' = \ln
m$, $k'' \geq k (1-\frac{\ln m}{m})$. Now replace in the
construction for $k'=\ln m$ every color by a group of 
$k''/k'$ colors, where all groups are pairwise disjoint,
to get $m$ lists, each of size
$(1+o(1))k'' \ln m=(1+o(1)) k \ln m$, in  a set of size
$k'' \ln^2 m$,
so that no 
subset of size $\ln^2 m$, that is,
a fraction of $1/k''$ of the colors, intersects
all of them. This shows, as before. that
$ch(K_{m*k''}) > (1+o(1)) k'' \ln m=(1+o(1)) k \ln m$, 
and as
$ch(K_{m*k})$ can be only larger (since it contains $K_{m*k''})$ as a
subgraph), this completes the proof.
\hfill $\Box$

\subsection{The existence of the limit}

In this subsection we prove Theorem \ref{t312}. A natural way to try
and prove it is to show that for every
fixed $m$, the function $f(k)=ch(K_{m*k})$ is either sub-additive
or super-additive. In theses cases the existence of the limit would
follow from Fekete's Lemma. Unfortunately this function is not
always super-additive, as shown by the case $m=3$, since 
$ch(K_{3*2})=3$ and 
$$
ch(K_{3*2k})= \lceil (8k-1)/3 \rceil <3k.
$$

Similarly, the function is not always sub-additive, as shown by the
case of large $m$, where 
$ch(K_{m*2})=(1+o(1)) \log_2 m$
and for large $k$,
$$
ch(K_{m*2k})= (1+o(1)) 2k \ln m > (1+o(1)) k \log_2 m.
$$
Still we show that the limit exists by proving that the
above function is nearly sub-additive.

We need the following technical lemma.
\begin{lemma}
\label{l323}
There is a positive integer $s_0$, so that for every integer
$s>s_0$ the following holds. For every real
$c$ satisfying $1/3 \leq c \leq 2/3$ and for every integer $t \geq
2$:
$$
c[(s^{1/3}+3)t^{1/3}-3]^3 -c [(s^{1/3}+3)t^{1/3}-3]^2
\geq
[(s^{1/3}+3)(ct)^{1/3}-3]^3.
$$
\end{lemma}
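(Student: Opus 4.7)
The plan is to reduce the inequality to a one-variable polynomial statement by substituting
\[
u := (s^{1/3}+3)\,t^{1/3}-3, \qquad v := (s^{1/3}+3)(ct)^{1/3}-3,
\]
so that the claim reads $cu^{3}-cu^{2} \geq v^{3}$. The key algebraic observation is that $(ct)^{1/3}=c^{1/3}t^{1/3}$, which immediately gives the identity
\[
v \;=\; c^{1/3}(u+3)-3 \;=\; c^{1/3}\,u \;-\; 3(1-c^{1/3}).
\]

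Writing $\gamma := c^{1/3}$ and expanding $v^{3}$ via the binomial theorem, a direct calculation yields
\[
cu^{3}-cu^{2}-v^{3} \;=\; \gamma^{2}(9-10\gamma)\,u^{2} \;-\; 27\gamma(1-\gamma)^{2}\,u \;+\; 27(1-\gamma)^{3}.
\]
Thus the claim reduces to showing that this quadratic in $u$ is non-negative once $u$ is sufficiently large. For $c\in[1/3,2/3]$ we have $\gamma\in\bigl[(1/3)^{1/3},(2/3)^{1/3}\bigr]\subset(0,9/10)$, so the leading coefficient $\gamma^{2}(9-10\gamma)$ is strictly positive and bounded below by an absolute constant on this interval, while the remaining two coefficients are uniformly bounded in absolute value. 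Hence the quadratic is non-negative for every $u$ exceeding some threshold $U_{0}$ independent of $c$.

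To conclude, note that $t\geq 2$ gives $u \geq 2^{1/3}(s^{1/3}+3)-3$, which tends to $\infty$ with $s$. Choosing $s_{0}$ so large that $2^{1/3}(s_{0}^{1/3}+3)-3\geq U_{0}$, the inequality holds for every $s>s_{0}$, every $c\in[1/3,2/3]$, and every integer $t\geq 2$.

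The main (and essentially only) obstacle is securing a strictly positive leading coefficient $\gamma^{2}(9-10\gamma)$ uniformly in $c$. This depends critically on the hypothesis $c\leq 2/3$, since $(2/3)^{1/3}\approx 0.874<9/10$; were $c$ allowed to approach $1$, the coefficient would change sign and the quadratic would fail to dominate $v^{3}$ for large $u$. The lower bound $c\geq 1/3$ plays no role in the sign of the leading term but keeps $1-\gamma$ bounded away from $1$ and thus the subleading terms uniformly controlled. Beyond this algebraic check, the proof is a routine verification that the positive root of the quadratic remains bounded uniformly over $c\in[1/3,2/3]$.
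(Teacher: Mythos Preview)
Your proof is correct and rests on the same pivot as the paper's: after the substitution relating the two cubic expressions via $v=c^{1/3}u-3(1-c^{1/3})$, everything hinges on the numerical fact that $c^{1/3}\leq(2/3)^{1/3}<0.9$, so that the dominant $u^2$-term carries a strictly positive coefficient uniformly in $c$. The only difference is in the algebra. The paper sets $X=c^{1/3}u$, $Y=v$, rewrites the claim as $(X-Y)(X^2+XY+Y^2)\geq c^{1/3}X^2$, observes $X-Y=3(1-c^{1/3})>0.36$, and for large $s$ bounds $X^2+XY+Y^2>2.7X^2$ to conclude. You instead expand $cu^3-cu^2-v^3$ completely to the explicit quadratic $\gamma^{2}(9-10\gamma)u^{2}-27\gamma(1-\gamma)^{2}u+27(1-\gamma)^{3}$ and argue from the sign of the leading coefficient. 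Your route is slightly more mechanical and makes the role of the threshold $\gamma<9/10$ transparent; the paper's factorisation avoids the full expansion at the cost of a couple of ad hoc numerical estimates. Either way the substance is identical.
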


\noindent
{\bf Proof:}\, 
Put 
$$
X=c^{1/3}[(s^{1/3}+3)t^{1/3}-3],~~~Y=(s^{1/3}+3)(ct)^{1/3}-3.
$$
Then the above inequality is equivalent to the statement
$$
X^3 -c^{1/3} X^2 \geq Y^3,
$$
that is, to
$$
(X-Y)(X^2+XY+Y^2) \geq c^{1/3} X^2.
$$
Since $1/3 \leq c \leq 2/3$, we have $0.69<c^{1/3}<0.88$.
Thus $X-Y=3-3c^{1/3}>0.36$. For sufficiently large $s$,
$X>Y>0.9X>0$ and thus $XY >0.9 X^2$ and $Y^2>0.8 X^2$.
Therefore
$$
(X-Y)(X^2+XY+Y^2) > 0.36 \cdot 2.7 X^2=0.972 X^2
> 0.88 X^2 > c^{1/3} X^2.
$$
This completes the proof. \hfill $\Box$

We also need the following simple corollary of Chernoff's
Inequality (see, e.g., \cite{AS}, Appendix A.)
\begin{lemma}
\label{l324}
There exists an $s_0>0$ so that for every $s >s_0$, every integer
$t \geq 2$ and every real $c$ satisfying $1/3 \leq c \leq 2/3$,
the probability that the Binomial random variable with parameters
$[(s^{1/3}+3)t^{1/3}-3]^{3}$ and $c$ is at most
$$
c[(s^{1/3}+3)t^{1/3}-3]^3 -c [(s^{1/3}+3)t^{1/3}-3]^2
$$
is smaller than $\frac{1}{(st)^2}$.
\end{lemma}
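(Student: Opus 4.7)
The plan is to apply the standard lower-tail Chernoff bound for binomials, as recorded in Appendix A of \cite{AS}. To streamline notation, write $N = [(s^{1/3}+3)t^{1/3}-3]^3$, so the statement becomes: for $X \sim \mathrm{Bin}(N,c)$ with mean $\mu = cN$, the probability that $X$ is at most $cN - cN^{2/3}$ is less than $(st)^{-2}$. In other words, I want to control the lower tail at deviation $\lambda = cN^{2/3}$ below the mean; the hypothesis $\lambda \le \mu$ needed to make the Chernoff bound nontrivial reduces to $N^{2/3} \le N$, which holds as soon as $N \ge 1$.

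Applying Chernoff then yields
$$\Pr[X \le \mu - \lambda] < \exp\!\bigl(-\lambda^2/(2\mu)\bigr) = \exp\!\bigl(-cN^{1/3}/2\bigr),$$
and since $c \ge 1/3$ this is at most $\exp(-N^{1/3}/6)$. The whole claim therefore reduces to the deterministic inequality $N^{1/3} > 12\ln(st)$. Because $t \ge 1$, we have $N^{1/3} = (s^{1/3}+3)t^{1/3} - 3 \ge s^{1/3} t^{1/3}$, so it suffices to show $s^{1/3} t^{1/3} > 12 \ln(st)$ for every $t \ge 2$ and every sufficiently large $s$.

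I would finish with a short one-variable calculus check: fix $s$ and set $g(t) = s^{1/3} t^{1/3} - 12\ln(st)$, so that $g'(t) = \tfrac{1}{3} s^{1/3} t^{-2/3} - 12/t$, which is nonnegative once $s^{1/3} t^{1/3} \ge 36$, hence throughout $[2,\infty)$ provided $s$ is larger than some absolute constant. Consequently $g(t) \ge g(2) = 2^{1/3} s^{1/3} - 12\ln(2s)$ on $[2,\infty)$, and the right-hand side is positive for all sufficiently large $s$, which defines the required $s_0$. I do not expect any real obstacle here: the statement is essentially a quantitative repackaging of Chernoff, and the only thing to watch is that the factor $c$ appearing in the deviation is the same $c$ that is the binomial parameter, which is exactly what makes $\lambda^2/\mu$ collapse to the clean expression $cN^{1/3}$ and lets the constraint $c \ge 1/3$ absorb the last constant. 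Beyond that, the proof is a routine Chernoff-plus-polynomial comparison, and the somewhat awkward cubic and root exponents in the statement are simply those needed for the output to slot into Lemma~\ref{l323}.
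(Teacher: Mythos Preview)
Your proof is correct and follows exactly the route the paper indicates: the paper's own argument is the single line ``By Chernoff this probability is smaller than $e^{-\Omega((st)^{1/3})}$,'' and your write-up is precisely a careful unpacking of that claim, with the same bound $\exp(-cN^{1/3}/2) \le \exp(-(st)^{1/3}/6)$ and the elementary comparison $(st)^{1/3} > 12\ln(st)$ for large $s$ supplying the required $s_0$.
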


\noindent
{\bf Proof:},
By Chernoff this probability is smaller than
$$
e^{-\Omega((st)^{1/3})}.
$$
\hfill  $\Box$

Using the above, we prove the following.
\begin{prop}
\label{p325}
For every fixed $m$ there exists $k_0=k_0(m)$ so that for all 
$k>k_0$ the following holds. If $ch(K_{m*k})=s$ then
for every integer $t \geq 1$
$$
ch(K_{m*kt}) \leq [(s^{1/3}+3)t^{1/3}-3]^3.
$$
\end{prop}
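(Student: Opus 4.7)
The plan is strong induction on $t$. Writing $f(t):=\bigl[(s^{1/3}+3)t^{1/3}-3\bigr]^3$, so that the claim reads $ch(K_{m*kt})\le f(t)$, the base case $t=1$ is immediate since $f(1)=s=ch(K_{m*k})$ by hypothesis. For $t\ge 2$, set $t_1=\lfloor t/2\rfloor$ and $t_2=t-t_1$, so $t_1,t_2<t$, and put $c_i:=t_i/t$. One checks $c_1,c_2\in[1/3,2/3]$ and $c_1+c_2=1$, which is precisely the regime in which Lemma \ref{l323} applies.

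Given any list assignment $\{S_v\}$ with $|S_v|=f(t)$, let $S=\bigcup_v S_v$ and let $R\subseteq S$ be a random subset obtained by including each color independently with probability $c_1$; set $B=S\setminus R$. For each vertex $v$ the set $R_v:=S_v\cap R$ has a Binomial distribution with parameters $f(t)$ and $c_1$. By Lemma \ref{l324}, with probability greater than $1-1/(st)^2$ one has $|R_v|\ge c_1 f(t)-c_1[(s^{1/3}+3)t^{1/3}-3]^2$, and by Lemma \ref{l323} this lower bound is at least $f(c_1 t)=f(t_1)$. The symmetric statement gives $|B_v|\ge f(t_2)$ with the same probability. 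Since $K_{m*k}$ contains $K_k$ we have $s\ge k$, so union-bounding over the $mkt$ vertices and the two events produces a total failure probability at most $2mkt/(st)^2\le 2m/(kt)$, which is less than $1$ whenever $k>2m$. Fix an outcome of the random partition for which $|R_v|\ge f(t_1)$ and $|B_v|\ge f(t_2)$ for every vertex $v$.

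Now split the $kt$ color classes of $K_{m*kt}$ into groups $G_1,G_2$ of sizes $kt_1,kt_2$. The subgraph induced on the vertices of $G_i$ is isomorphic to $K_{m*kt_i}$, so by the inductive hypothesis $ch(K_{m*kt_i})\le f(t_i)$. Use any size-$f(t_1)$ subset of $R_v$ as the list at $v\in G_1$ and any size-$f(t_2)$ subset of $B_v$ as the list at $v\in G_2$; the inductive hypothesis then produces proper list-colorings of $G_1$ from the red palette and of $G_2$ from the blue palette. Because $R$ and $B$ are disjoint, concatenating them yields a proper list-coloring of $K_{m*kt}$ from the original $S_v$, completing the induction. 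Choosing $k_0(m)$ so that $k>2m$ and $s=ch(K_{m*k})$ exceeds the threshold $s_0$ of Lemmas \ref{l323} and \ref{l324} makes everything go through, and this $k_0$ depends only on $m$ since $s\ge k$.

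The one genuinely delicate point — the reason $f$ has its unusual form — is the inequality of Lemma \ref{l323}, which only holds for $c\in[1/3,2/3]$. This is exactly why the recursion must bisect $t$ rather than peel off a single class: any substantially asymmetric split would either violate the hypothesis of Lemma \ref{l323} or else make the loss term $c[(s^{1/3}+3)t^{1/3}-3]^2$ too large to be absorbed. Verifying that the near-balanced cut really does let the recursion close is the step that requires care; everything else is a routine random-splitting and union-bound argument.
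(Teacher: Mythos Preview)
Your proof is correct and follows essentially the same route as the paper: strong induction on $t$, a near-balanced split $t=t_1+t_2$ with $t_i\in[t/3,2t/3]$, a random bipartition of the colour palette with probabilities $t_i/t$, Lemmas~\ref{l323} and~\ref{l324} to guarantee each vertex keeps at least $f(t_i)$ colours on the appropriate side, a union bound over the $mkt$ vertices (using $s\ge k>k_0$ to make it nontrivial), and the inductive hypothesis on each half. The only cosmetic differences are that you check both tail events for every vertex (costing a harmless factor~$2$ in the union bound) and take $k_0>2m$ rather than the paper's $k_0>m$; neither affects the argument.
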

{\bf Proof:}\,
Since trivially $ch(K_{m*k}) \geq ch(K_{1*k})=k$, we can choose
$k_0>m$ so that for $k>k_0$,
$s=ch(K_{m*k})$ is sufficiently large to ensure that
the assertions of Lemma \ref{l323} and Lemma \ref{l324} hold. 
Note also that for this $k_0$, $s >m$.
With this $k_0$ we prove the above by induction on $t$.
For $t=1$ there is nothing to prove. Assuming the result
holds for all integers $t'<t$ we prove it for $t$.
Let $G=K_{m*kt}=(V,E)$ have the $kt$ color classes
$U_1,U_2, \cdots U_{kt}$, and suppose we have a list  $L_v$
of $[(s^{1/3}+3)t^{1/3}-3]^3$ colors assigned to each vertex
$v \in V$. Put $t_1=\lfloor t/2 \rfloor$,
$t_2=\lceil t/2 \rceil$ and split $V$ into two disjoint 
sets $V_1,V_2$, where
$V_1$ consists of all vertices in the first $t_1k$ color classes
$U_j$ and $V_2$ consist of all vertices in the last $t_2$ color
classes $U_j$. Let $G_1$ be the induced subgraph of $G$ on $V_1$
and
$G_2$ the induced subgraph of $G$ on $V_2$. Thus $G_1$ is a copy of
$K_{m*kt_1}$ and $G_2$ is a copy of $K_{m*t_2}$.

Let $S \cup_{v \in V} L_v$ be the set of all colors,
and let $S=S_1 \cup S_2$ be a random partition of it into two
disjoint sets, where each color in $S$ is chosen, randomly and
independently, to lie in $S_1$ with probability $t_1/t$ and to lie
in $S_2$ with probability $t_2/t$.

Our objective is to use only the colors of $S_1$ for the vertices
in $G_1$ and only those of $S_2$ for the vertices in $G_2$.
Note that $1/3 \leq t_1/t \leq t_2/t \leq 2/3$. For each vertex
$v \in V_1$ the set $L_v \cap S_1$ of colors in $S_1$ that belong
to the list of $v$ is of size which is a binomial random variable
with parameters 
$[(s^{1/3}+3)t^{1/3}-3]^3$ and $t_1/t$. Therefore, by Lemma
\ref{l324} the probability
that this size is smaller than 
$(t_1/t)[(s^{1/3}+3)t^{1/3}-3]^3 -(t_1/t) [(s^{1/3}+3)t^{1/3}-3]^2$
is less than $\frac{1}{(st)^2}$. By the same reasoning the
probability that for a vertex $u \in V_2$ the size of
$L_u \cap V_2$ is smaller than
$(t_2/t)[(s^{1/3}+3)t^{1/3}-3]^3 -(t_2/t) [(s^{1/3}+3)t^{1/3}-3]^2$
is less than $\frac{1}{(st)^2}$. As $s>m, s \geq k$ 
the total number of vertices is 
smaller than $kst<(st)^2$ and hence with positive probability this
does not
happen for any vertex. By Lemma \ref{l323} in this case each vertex
of $G_1$ still has at least 
$[(s^{1/3}+3)(t_1)^{1/3}-3]^3$ colors in its list (restricted to
the colors in $S_1$), and a similar statement holds for the vertices
of $G_2$. We can now fix a partition $S=S_1 \cup S_2$ for which this holds 
and apply induction to color $G_1$ by the colors from $S_1$ and
$G_2$ by the colors from $S_2$, completing the proof. 
\hfill $\Box$
\vspace{0.2cm}

\noindent
{\bf Proof of Theorem \ref{t312}:}\,
Fix an integer $m \geq 1$.
By the result of \cite{Al} stated in Section 1, 
$$
\lim \inf_{k \rightarrow \infty} \frac{ch(K_{m*k})}{k}=q
$$ 
exists (and is $\Theta(\ln m)$). Fix a small $\epsilon>0$
and let $k>k_0$ be a large integer, where $k_0$ is as in
Proposition \ref{p325}, so that 
$$
\frac{ch(K_{m*k})}{k} \leq q +\epsilon.
$$
Put $s=ch(K_{m*k})$. Then $s \leq k(q+\epsilon)$.
By Proposition \ref{p325} for every integer
$t \geq 1$,
$$
ch(K_{m*kt}) \leq [(s^{1/3}+3)t^{1/3}-3]^3 
< [s^{1/3}e^{3/s^{1/3}}t^{1/3}]^{3}=ste^{9/s^{1/3}}.
$$
Suppose, further, that $k$ is chosen to be sufficiently large to
ensure that 
$$
e^{9/k^{1/3}}<(1+\epsilon).
$$ 
As $s=ch(K_{m*k}) \geq k$
in this case we have also
$$
e^{9/s^{1/3}}<(1+\epsilon).
$$
Therefore, for every integer $t \geq 1$
$$
ch(K_{m*kt}) \leq 
ste^{9/s^{1/3}} <
k(q+\epsilon)t(1+\epsilon).
$$
It follows that for every large integer $p$,
$$
ch(K_{m*p}) \leq
k(q+\epsilon) \lceil p/k \rceil (1+\epsilon)
\leq 
k(q+\epsilon) (p+k)/k (1+\epsilon).
$$
Thus
$$
\frac{ch(K_{m*p})}{p} 
\leq k(q+\epsilon) (p+k)/(pk) (1+\epsilon)
$$
which for sufficiently large $p$ is at most, say,
$$
(q+\epsilon)(1+\epsilon)^2.
$$
Since, by the result in \cite{Al}, $q =\Theta( \ln m)$ 
and $\epsilon>0$ can be chosen to be arbitrarily small 
this implies that 
$$
\lim \sup_{p \rightarrow \infty} \frac{ch(K_{m*p})}{p}
\leq q =
\lim \inf_{p \rightarrow \infty} \frac{ch(K_{m*p})}{p},
$$
completing the proof. 
\hfill $\Box$

\section{On vector balancing}
Let $p$ be a prime, let $w_1=e^{2 \pi i/p}$ be the $p$th primitive 
root of unity, and define $w_j=w_1^j$ for $0 \leq j \leq p-1$. Let
$n$  be an integer divisible by $p$, and let $B$ be the set of all
$p^n$ vectors of length $n$ in which each coordinate is in the set
$\{1,w_1, \ldots ,w_{p-1} \}$. Let $K(n,p)$ denote the minimum $k$
so that there exists a set $\{v_1, v_2, \ldots ,v_k\}$ of members of
$B$ such that for every $u \in B$ there is some $1 \leq j \leq k$
so that the scalar inner product $v_i \cdot u=0$.

Heged\H{u}s \cite{Heg} proved that for every prime $p$ and $n$
divisible by $p$, $K(n,p) \geq (p-1)n$, extending a result of
\cite{ABCO} where the statement is proved for $p=2$. He also
conjectured that
equality always holds, as is the case for $p=2$, by a simple construction
of Knuth (c.f. \cite{ABCO}).
Our first observation here is that this conjecture
is (very) false for every prime $p \geq 5$ and large $n$.
\begin{prop}
\label{p411}
For every prime $p$ and every $n$ divisible by $p$
\begin{equation}
\label{e411}
K(n,p) \geq \frac{p^n [(n/p)!]^p}{n!}.
\end{equation}
Therefore, for every fixed $p$ and large $n$
\begin{equation}
\label{e412}
K(n,p) \geq  (1+o(1)) \frac{(2 \pi)^{(p-1)/2}}{p^{p/2}} \cdot
n^{(p-1)/2}.
\end{equation}
\end{prop}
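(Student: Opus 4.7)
The plan is to lower bound $K(n,p)$ by a straightforward counting/union-bound argument: control the maximum number of vectors in $B$ that any single $v\in B$ can annihilate, and divide $|B|=p^n$ by that number. The main task is therefore to compute, for each fixed $v\in B$, the quantity $N(v)=\#\{u\in B:v\cdot u=0\}$.

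First I would write $v_i=w_{\alpha_i}$ and $u_i=w_{\beta_i}$, so that $v\cdot u=\sum_i w_{\alpha_i+\beta_i}=\sum_{j=0}^{p-1}d_j w_j$, where $d_j$ counts the positions $i$ with $\alpha_i+\beta_i\equiv j\pmod p$. The key algebraic step is the observation that, since the minimal polynomial of $w_1$ over $\mathbb Q$ is the cyclotomic polynomial $\Phi_p(x)=1+x+\cdots+x^{p-1}$ of degree $p-1$, the only $\mathbb Q$-linear relation among $1,w_1,\ldots,w_{p-1}$ is $\sum_j w_j=0$. Consequently, for nonnegative integers $d_j$ with $\sum_j d_j=n$ (which is divisible by $p$), $\sum_j d_j w_j=0$ forces $d_0=d_1=\cdots=d_{p-1}=n/p$.

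Next, for each fixed $v$ (equivalently, fixed $(\alpha_i)$), the map $\beta_i\mapsto\alpha_i+\beta_i\pmod p$ is a bijection on $\mathbb Z/p$ at every coordinate, so counting $u\in B$ with prescribed values $d_j=n/p$ is the same as counting sequences $(\gamma_i)\in(\mathbb Z/p)^n$ with each residue appearing exactly $n/p$ times. This gives
\[
N(v)=\frac{n!}{[(n/p)!]^p},
\]
independently of $v$. Since any covering family must satisfy $k\cdot\max_v N(v)\geq|B|=p^n$, inequality \eqref{e411} follows.

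Finally, the asymptotic \eqref{e412} is a routine application of Stirling's formula: $n!\sim\sqrt{2\pi n}\,(n/e)^n$ and $[(n/p)!]^p\sim(2\pi n/p)^{p/2}(n/(pe))^n$ combine to give
\[
\frac{p^n[(n/p)!]^p}{n!}\sim\frac{(2\pi n/p)^{p/2}}{\sqrt{2\pi n}}=\frac{(2\pi)^{(p-1)/2}}{p^{p/2}}\,n^{(p-1)/2}.
\]
There is no real obstacle here; the only subtlety is the irreducibility argument forcing $d_0=\cdots=d_{p-1}$, and the pleasant feature of the proof is that $N(v)$ comes out independent of $v$, so the union bound is tight in the counting step.
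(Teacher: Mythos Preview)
Your proof is correct and follows essentially the same approach as the paper: both arguments use the irreducibility of $\Phi_p$ to identify the vectors orthogonal to a given $v\in B$ as exactly those with each residue class appearing $n/p$ times, obtain the count $n!/[(n/p)!]^p$ (the paper does this for the all-ones vector and then invokes symmetry, which is your shift bijection), and finish with the union bound and Stirling's formula.
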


The proof of Heged\H{u}s is based on Gr\"obner basis methods. In
particular, he established the following result.
\begin{theo}[\cite{Heg}]
\label{t412}
Let $p$ be a prime and let $P(x)=P(x_1,x_2, \ldots ,x_{4p})$ be a
polynomial over $Z_p$ which vanishes over all $\{0,1\}$ vectors of
Hamming weight $2p$ and suppose that there is a $\{0,1\}$-vector 
$z$ of Hamming weight $3p$ so that $P(z) \neq 0$ (in $Z_p$). 
Then the degree of $P$ is at least $p$.
\end{theo}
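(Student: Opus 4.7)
My plan is to combine a truncated Möbius-inversion identity with Lucas's theorem, reducing the theorem to a rank computation of mod-$p$ inclusion matrices.

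First I would reduce to $P$ multilinear via $x_i^2 = x_i$, writing $P = \sum_{|S|<p} c_S \prod_{i \in S} x_i$. The key combinatorial identity is that for any $T \subseteq [4p]$,
$$P(\mathbf{1}_T) = \sum_{R \subseteq T,\, |R|<p}(-1)^{p-1-|R|}\binom{|T|-|R|-1}{p-1-|R|} P(\mathbf{1}_R),$$
obtained by inserting the Möbius formula $c_S = \sum_{R \subseteq S}(-1)^{|S|-|R|} P(\mathbf{1}_R)$ into $P(\mathbf{1}_T) = \sum_{S \subseteq T,|S|<p} c_S$, swapping the order of summation, and collapsing the inner sum via $\sum_{k=0}^K(-1)^k\binom{n}{k} = (-1)^K\binom{n-1}{K}$.

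Applying Lucas's theorem, for $0 \leq r < p$ both $\binom{2p-r-1}{p-1-r}$ and $\binom{3p-r-1}{p-1-r}$ are congruent to $1$ modulo $p$: in base $p$, $2p-r-1$ has digits $(1, p-r-1)$ and $3p-r-1$ has digits $(2, p-r-1)$, while $p-1-r$ has single digit $p-r-1$, so Lucas produces $\binom{p-r-1}{p-r-1} = 1$ in both cases. Consequently, whenever $|T| \in \{2p, 3p\}$,
$$P(\mathbf{1}_T) \equiv \sum_{R \subseteq T,\, |R|<p}(-1)^{p-1-|R|} P(\mathbf{1}_R) \pmod{p}.$$
Setting $a_R := (-1)^{p-1-|R|}P(\mathbf{1}_R)$ and $Q(x) := \sum_{|R|<p} a_R \prod_{i \in R} x_i$, we get a polynomial of degree $<p$ agreeing with $P$ at all vectors of weight $2p$ or $3p$. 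The hypothesis becomes $Q$ vanishing on weight-$2p$ vectors, and the desired conclusion is $Q$ vanishing on weight-$3p$ vectors, which would give $P(z) = Q(z) = 0$.

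At this point the Möbius identity alone is circular, since the last implication is itself the theorem applied to $Q$. To break the circularity I would invoke Wilson's formula for the $\mathbb{F}_p$-rank of inclusion matrices: decomposing the space of multilinear polynomials of degree $<p$ into $S_{4p}$-isotypic components, the evaluation at weight $w$ acts on each Specht layer $S^{(4p-i,i)}$ via a scalar which Wilson's formula identifies, up to a unit, with $\binom{w-i}{\ell-i} \pmod{p}$. By the same Lucas calculation, these scalars coincide for $w=2p$ and $w=3p$, so the kernels of the two evaluation maps coincide. The main obstacle is this last rank computation; Hegedűs's proof bypasses the representation theory by an explicit Gröbner-basis computation for the vanishing ideal of weight-$2p$ vectors, where the same Lucas congruences reappear in identifying the standard monomials.
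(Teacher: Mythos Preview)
Your proposal does not yet contain a proof. The M\"obius--Lucas computation in the first three paragraphs is correct, but as you yourself note it is circular: you start with a multilinear $P$ of degree $<p$, produce another multilinear $Q$ of degree $<p$ agreeing with $P$ on weight-$2p$ and weight-$3p$ vectors, and are left with exactly the original statement for $Q$. Nothing has been reduced. The entire content of the argument would have to live in your final paragraph, and there you only gesture at ``Wilson's formula'' and an $S_{4p}$-isotypic decomposition, explicitly labelling this ``the main obstacle.'' Two specific problems with that sketch: the symbol $\ell$ in $\binom{w-i}{\ell-i}$ is never defined, and over $\mathbb{F}_p$ the Specht modules $S^{(4p-i,i)}$ are in general not irreducible, so there is no clean isotypic decomposition on which evaluation acts by scalars. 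Wilson's diagonal-form theorem can be made to work here, but it is not a one-line citation.

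In fact the rank step you are reaching for has a two-line proof that bypasses Wilson entirely (for odd $p$; the case $p=2$ is a trivial check on affine-linear functions). Given $Q$ multilinear of degree $<p$ vanishing on all weight-$2p$ vectors and a $3p$-set $T$, sum $Q(\mathbf{1}_{T'})$ over all $2p$-subsets $T'\subseteq T$. Writing $Q=\sum_{|R|<p}c_R\prod_{i\in R}x_i$ and swapping sums gives $\sum_{R\subseteq T,\,|R|<p}c_R\binom{3p-|R|}{2p-|R|}$; by Lucas, $\binom{3p-r}{2p-r}\equiv 2\pmod p$ for every $0\le r<p$, so the sum equals $2\,Q(\mathbf{1}_T)$. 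The left side is $0$ by hypothesis, hence $Q(\mathbf{1}_T)=0$. This is the kernel inclusion you wanted, obtained by a single Lucas evaluation rather than the two you performed.

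The paper's proof is entirely different and avoids all of this. Assuming $\deg P\le p-1$, it builds the auxiliary polynomial
\[
f=P(x)\Bigl[1-\bigl(\textstyle\sum_i x_i\bigr)^{p-1}\Bigr]\,x_1\cdots x_{p+1}\,(1-x_{3p+1})\cdots(1-x_{4p})\;-\;P(z)\,x_1\cdots x_{3p}\,(1-x_{3p+1})\cdots(1-x_{4p}),
\]
checks that $\deg f=4p$ with the coefficient of $x_1\cdots x_{4p}$ nonzero, and applies the Combinatorial Nullstellensatz to obtain a $\{0,1\}$-point $y$ with $f(y)\neq 0$. A short case analysis on the Hamming weight of $y$ (using Fermat's little theorem for the bracketed factor and the hypothesis on $P$ for weight $2p$) then forces $f(y)=0$, a contradiction. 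No M\"obius inversion, no inclusion matrices, no representation theory.
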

An elementary proof of this lemma, due to S. Srinivasan, is
given in \cite{AKV}. Here we describe a variant of this proof
providing a very short derivation
of this lemma from the Combinatorial Nullstellensatz proved in
\cite{Al00}, which is the following.
\begin{theo}
\label{t413}
Let $F$ be an arbitrary field, and let $f=f(x_1, \ldots ,x_n)$
be a polynomial in $F[x_1, \ldots ,x_n]$. Suppose the degree
$deg(f)$ of $f$ is $\sum_{i=1}^n t_i$, where each $t_i$ is a
nonnegative integer, and suppose the coefficient of
$\prod_{i=1}^n x_i^{t_i}$ in $f$ is nonzero. If
$S_1, \ldots ,S_n$ are subsets of $F$ with $|S_i|>t_i$,
then there are $s_1 \in S_1, s_2 \in S_2, \ldots, s_n \in S_n$
so that
$$
f(s_1, \ldots ,s_n) \neq 0.
$$
\end{theo}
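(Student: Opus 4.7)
My plan is to prove this by contradiction, following the standard ``division / reduction'' approach. Suppose that $f(s_1,\ldots,s_n)=0$ for every choice $s_i \in S_i$. For each coordinate $i$, define the univariate polynomial
\[
g_i(x_i) = \prod_{s \in S_i}(x_i - s) = x_i^{|S_i|} - h_i(x_i),
\]
where $\deg h_i < |S_i|$. Since $g_i$ vanishes identically on $S_i$, we have the identity $x_i^{|S_i|} = h_i(x_i)$ whenever $x_i \in S_i$. The plan is to repeatedly use this identity to rewrite $f$ as a polynomial $\tilde f$ with $\deg_{x_i} \tilde f < |S_i|$ for every $i$, such that $\tilde f$ and $f$ agree on $S_1 \times \cdots \times S_n$.

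The main step to get right is that this reduction preserves the coefficient of the monomial $\prod_i x_i^{t_i}$. I would argue as follows. Each elementary reduction replaces a factor $x_j^{|S_j|}$ occurring in some monomial $M = \prod_i x_i^{a_i}$ (with $a_j \ge |S_j|$) by $h_j(x_j)$, which has $x_j$-degree strictly less than $|S_j|$. Thus each new monomial produced from $M$ has the same $x_i$-degrees for $i \ne j$ but strictly smaller $x_j$-degree, hence strictly smaller \emph{total} degree than $M$. Iterating, every monomial that ever gets genuinely reduced is replaced by monomials of strictly smaller total degree. Since $\prod_i x_i^{t_i}$ has total degree $\sum_i t_i = \deg(f)$, and since $t_i < |S_i|$ means this monomial is already in reduced form, it can appear in $\tilde f$ only as the image of itself in $f$. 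Consequently the coefficient of $\prod_i x_i^{t_i}$ in $\tilde f$ equals its coefficient in $f$, which is nonzero. In particular $\tilde f \not\equiv 0$, while $\tilde f$ still vanishes on $S_1 \times \cdots \times S_n$.

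It then remains to show that this is impossible: a nonzero polynomial $\tilde f$ with $\deg_{x_i} \tilde f < |S_i|$ for every $i$ cannot vanish on the entire grid $S_1 \times \cdots \times S_n$. I would prove this by induction on $n$. For $n=1$ this is the elementary fact that a nonzero univariate polynomial of degree less than $|S_1|$ has fewer than $|S_1|$ roots. For the inductive step, write
\[
\tilde f(x_1,\ldots,x_n) = \sum_{j=0}^{|S_n|-1} \tilde f_j(x_1,\ldots,x_{n-1})\, x_n^{j}.
\]
For each fixed $(s_1,\ldots,s_{n-1}) \in S_1 \times \cdots \times S_{n-1}$, the univariate polynomial $\tilde f(s_1,\ldots,s_{n-1},x_n)$ has degree less than $|S_n|$ and vanishes on $S_n$, so all its coefficients $\tilde f_j(s_1,\ldots,s_{n-1})$ are zero. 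By the inductive hypothesis each $\tilde f_j$ is identically zero, hence $\tilde f \equiv 0$, a contradiction. The only subtle ingredient is the degree bookkeeping in the reduction step; everything else is routine, and the whole argument needs nothing beyond standard field arithmetic, so it applies to arbitrary fields $F$ as claimed.
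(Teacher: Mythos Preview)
Your argument is correct and is essentially the standard proof of the Combinatorial Nullstellensatz as given in \cite{Al00}. Note, however, that the present paper does not actually supply a proof of Theorem~\ref{t413}: it is quoted from \cite{Al00} as a known tool and then applied to derive Theorem~\ref{t412}. So there is no ``paper's own proof'' to compare against here; your write-up simply reproduces the original reduction-modulo-$g_i$ argument together with the grid lemma proved by induction on $n$, and both steps are handled correctly. The only point worth stating a little more explicitly is termination of the reduction: since each elementary replacement strictly lowers the $x_j$-degree of the affected monomial while keeping the other degrees fixed, the multiset of exponent vectors decreases in a well-founded order, so the process stops after finitely many steps with every $x_i$-degree below $|S_i|$.
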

\subsection{Proofs}

\noindent
{\bf Proof of Proposition \ref{p411}:}\, 
Let $M$ be the collection of all vectors in $B$ in which each
$w_i$ appears in
exactly $n/p$ coordinates and let
$$
m=|M|=\frac{n!}{[(n/p)!]^p}
$$ 
be its cardinality. We claim that $M$ is the set of all vectors in
$B$ that are orthogonal to the vector ${\bf j}=(1,1, \ldots ,1) \in B$.
Indeed, it is a well known consequence of Eisenstein's
criterion that the minimal polynomial of $w_1$ over the rationals
is the polynomial $1+x+x^2 + \cdots +x^{p-1}$. Therefore, if 
$\sum_{i=0}^{p-1} \alpha_i w_i=0$ for some integers $\alpha_i$,
then the polynomial $1+x+x^2 + \cdots +x^{p-1}$ divides
$\sum_{i=0}^{p-1} \alpha_i x^i$, implying that all the coefficients 
$\alpha_i$ are equal. This implies the assertion of the claim.

By the claim, the number of vectors in $B$  orthogonal to
${\bf j}$ is exactly $m$, and this is clearly also the
number of vectors in $B$ orthogonal to any other fixed member 
of $B$. It follows that if each vector in $B$ is orthogonal to at
least one vector in a subset of cardinality $k=K(n,p)$  of $B$,
then $k \geq p^n/m$, implying (\ref{e411}). The estimate in
(\ref{e412}) follows from (\ref{e411}) by Stirling's Formula.
$\Box$
\vspace{0.2cm}

\noindent
{\bf Proof of Theorem \ref{t412}:}\, 
Without loss of generality assume that $z$ is the vector
starting with $3p$ $1$s followed by $p$ $0$s. Suppose, for
contradiction, that the degree of $P$ is at most $p-1$ and consider
the polynomial
$ f(x_1,x_2 \ldots ,x_{4p}) =f_1-f_2$
where
$$
f_1= P(x)[1-(\sum_{i=1}^{4p} x_i)^{p-1}]
x_1 x_2 \cdots x_{p+1}(1-x_{3p+1})(1-x_{3p+2}) \cdots (1-x_{4p})
$$
and
$$
f_2=P(z) x_1x_2  \cdots x_{3p}(1-x_{3p+1})(1-x_{3p+2}) \cdots
(1-x_{4p}).
$$
The degree of the polynomial $f_1$ is at most $4p-1$, that of
$f_2$ is exactly $4p$, hence the degree of $f$ is $4p$ and the
coefficient of $\prod_{i=1}^{4p} x_i$ in it is $P(z) \neq 0$.

By the
Combinatorial Nullstellensatz (Theorem \ref{t413}) 
with $F=Z_p$, $n=4p$, $t_i=1$ for all $i$
and $S_i= \{0,1\}$ for all $i$ there is a vector $y=(y_1,y_2, \ldots
,y_{4p}) \in \{0,1\}^{4p}$ so that $f(y_1,y_2, \ldots y_{4p}) \neq
0$. However, the only vector with $\{0,1\}$ coordinates in which
$f_2$ is nonzero is $z$, and as $f_1(z)=f_2(z)=P(z)$, $f(z)=0$.
Thus $y \neq z$ and $f(y)=f_1(y)$. If the Hamming weight of 
$y$ is not divisible by $p$ then the term 
$[1-(\sum_{i=1}^{4p} y_i)^{p-1}]$ vanishes. If the Hamming weight
of $y$ is $2p$ then the term $P(y)$ vanishes. If it is $0$ or $p$, then the
term $y_1y_2 \cdots y_{p+1}=0$ and if it is $4p$ or $3p$ (and $y \neq z$)
then the term $(1-y_{3p+1})(1-y_{3p+2}) \ldots (1-y_{4p})=0$.
Therefore $f(y)=f_1(y)=0$, contradiction. This completes the proof.
$\Box$

\section{High School Coalitions}

In May 2019 Shay Moran showed me a question posted by a woman 
named Ruthi Shaham in a Facebook Group focusing on Mathematics.
She wrote that her son has finished elementary school and was about
to move
to high school. When doing so, each child lists three friends,
and the assignment of children 
into classes ensures that each child will have at least one
of these three friends in his class. Ruthi further wrote that 
her son heard from five of his schoolmates that they found that
they can make their selections in a way that will ensure that all five will
be scheduled to the same class. She tried to check with a paper and pencil
and couldn't decide whether or not this is possible, but she
suspected it is 
impossible.
She thus asked if this is indeed the case, and if so, whether a 
larger group
of children can form such a coalition ensuring they will all 
necessarily be assigned
to the same class. 

In this brief section we show that Ruthi has indeed been right, no 
coalition of five children can ensure they will share the same class.
Moreover, no coalition of any size can ensure to share the same class.
This is related to known problems and results 
in Graph Theory, as are several variants of the problem
mentioned below.

Here is a more formal formulation  of the problem, with general parameters.
Let $N=\{1,2, \ldots ,n\}$ be a finite set of size $n$, 
let $k$ and $r$ be integers,
and suppose $n \geq k+1$. For any collection of subsets $S_i$ of $N$, 
$(1 \leq i \leq n)$,
with $i \not \in S_i$, and $|S_i|=k$ for all $i$, let 
$P(S_1, S_2, \ldots ,S_n)$
be a partition of $N$, so that :
\begin{equation}
\label{e511}
\mbox{For any part~~} N_i \mbox{~~of the partition
and for any~~} j \in N, \mbox{~~if~~} j \in N_i \mbox{~~then~~}
S_j \cap N_i \neq \emptyset.
\end{equation}

Here $N$ denotes the group of children, $S_i$ is the list of friends listed
by child number $i$, 
and the partition of $N$ into parts $N_i$ is the partition
of the set of children into classes. The function $P$ represents the 
way the children are partitioned into classes $N_i$ given their
choices $S_i$, and the condition (\ref{e511}) is the one ensuring that each
child will have at least one other child from his list in his class. 

We say that a subset $R \subset N$
is a successful coalition, if there are choices $S_i, i \in R$ of sets
$S_i$ satisfying $|S_i|=k$ and $i \not \in S_i$ so that for  any sets
$S_j \subset N$ with $|S_j|=k$ for all $j \in N-R$, and for any function
$P$ satisfying the conditions above, all elements of $R$ belong to
the same part of the partition $f(S_1, S_2, \ldots ,S_n)$. Note that 
by symmetry if a successful coalition of size $r$ is possible
then any set of size $r$ can form such a coalition, and hence we may
always assume that $R=\{1,2, \ldots ,r\}$.

The question of Ruthi is whether or not for $k=3$ there can be a successful
coalition $R$ of size $|R|=5$. 
\begin{theo}
\label{t511}

\noindent
\begin{enumerate}
\item
For $k \leq 2$ and every integer $r>1$, every set $R$ of size $r$ can form
a successful coalition.
\item
For any $k \geq 3$ and every $r > 1$ no set of size $r$ can form
a successful
coalition.
\end{enumerate}
\end{theo}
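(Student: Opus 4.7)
The plan is to handle the two parts very differently. For Part 1 ($k\le 2$) I would exhibit an explicit winning coalition. Writing $R=\{1,\ldots,r\}$ and $\sigma(i)=(i\bmod r)+1$, for $k=1$ the choice $S_i=\{\sigma(i)\}$ already suffices: in any valid partition the unique friend $\sigma(i)$ of $i$ must share $i$'s class, so $\sigma$ preserves the class assignment, and since $\sigma$ acts cyclically on $R$ this forces $R$ to be monochromatic. For $k=2$ I would augment this by a common anchor $x\notin R$, taking $S_i=\{\sigma(i),x\}$. Letting $C_x$ be the class containing $x$, for any $i\in R\setminus C_x$ the option $x\in C_i$ fails and hence $\sigma(i)\in C_i$, which gives $\sigma(i)\notin C_x$ as well. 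Thus $R\setminus C_x$ is $\sigma$-invariant, so by cyclicity it is either all of $R$ or empty: in the first case $\sigma$ preserves the class everywhere on $R$ and $R$ is monochromatic, and in the second $R\subseteq C_x$ and is again monochromatic.

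For Part 2 ($k\ge 3$) the plan is adversarial: for any coalition choice $(S_i)_{i\in R}$ I produce a valid partition that splits $R$. The task reduces to finding a $2$-coloring $\chi:N\to\{1,2\}$ which is not constant on $R$, in which every $i\in R$ has some $v\in S_i$ with $\chi(v)=\chi(i)$, and with both color classes of size at least two; from such a $\chi$ the adversary then freely picks $S_j$ for each $j\notin R$ to contain a same-class friend. In Case A, some $a\in R$ has a friend $v\in S_a\setminus R$; I color $\{a,v\}$ with $1$ and everything else with $2$, so that $a$ is satisfied via $v$, while every other $i'\in R$ has $|S_{i'}|=k\ge 3>2=|\{a,v\}|$ and therefore $S_{i'}$ meets the class-$2$ side. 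In Case B every $i\in R$ has $S_i\subseteq R$, which forces $|R|\ge k+1\ge 4$, and the problem reduces to finding a nontrivial partition $R=A\sqcup B$ of the directed friendship graph $D$ on $R$ (of min out-degree $k$) into two \emph{adequate} parts, meaning parts in which every vertex has an out-neighbor in its own part.

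To handle Case B, I would first look for a mutual edge $\{u,v\}$ in $D$: if one exists, $A=\{u,v\}$ is adequate, and since $|A|=2<k$ no vertex of $R\setminus A$ can have all $k$ of its out-neighbors inside $A$, so $R\setminus A$ is automatically adequate. If $D$ has no mutual edges, the edge count $rk\le\binom{r}{2}$ forces $r\ge 2k+1$; in this regime I would invoke a short-cycle guarantee for digraphs of min out-degree~$k$ to find a directed cycle $C$ of length at most $k$, take $A=V(C)$, and apply pigeonhole among all such short cycles: once their number exceeds $r$, one can select $C$ whose vertex set is not equal to $S_v$ for any $v\notin V(C)$, making $R\setminus A$ adequate as well. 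The main obstacle is precisely this no-mutual-edge subcase, where the proof requires combining classical girth bounds for min-out-degree digraphs with a careful cycle-counting argument to evade the ``blocking'' out-neighborhoods.
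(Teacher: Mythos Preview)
Your Part~1 is correct; the paper's $k=2$ construction ($S_1=\{2,3\}$, $S_2=\{1,3\}$, and $S_i=\{1,2\}$ for $3\le i\le r$) differs only in not requiring an anchor outside $R$, so it also covers the boundary case $n=r$ that your version misses. In Part~2, your Case~A and the mutual-edge subcase of Case~B are both correct and pleasingly direct.

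The genuine gap is the no-mutual-edge subcase of Case~B. What you need there --- a partition of the digraph $D$ on $R$ (minimum out-degree $k\ge 3$) into two nonempty adequate parts --- is equivalent, via the obvious maximal-extension step, to the existence of two vertex-disjoint directed cycles in $D$. That is precisely Thomassen's theorem (\emph{Combinatorica}~\textbf{3} (1983), 393--396), and it is what the paper invokes; it is a nontrivial result, not something one reproves in passing. Your proposed workaround does not go through as written: there is no general ``short-cycle guarantee'' producing a directed cycle of length at most $k$ from minimum out-degree $k$ --- the relevant girth bounds are of Caccetta--H\"aggkvist type, yielding a cycle of length at most roughly $r/k$, which can far exceed $k$ once $r\gg k^2$. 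Even if short cycles were available, your pigeonhole selection is ill-posed, since the family of ``blocking'' out-neighbourhoods $\{S_v : v\notin V(C)\}$ that $V(C)$ must avoid depends on the candidate cycle $C$ itself, so comparing the number of cycles to $r$ does not yield a valid choice. You rightly flag this subcase as the main obstacle; it is in fact the whole difficulty, and the paper resolves it by citing Thomassen rather than by any elementary counting argument.
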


\subsection{Proofs}

Before presenting the general proof, here is a short argument showing
that for $k=3$ no successful coalition of size $5$ is possible. This
proof is a simple application of the probabilistic method.
\vspace{0.2cm}

\noindent
{\bf Claim:}\, Suppose $n \geq 5$, $N=\{1,2, \ldots ,n\}$, 
$R=\{1,2, \ldots ,5\}$, and let $S_1, \ldots ,S_5$ be subsets of
$N$, each of size $3$, so that $i \not \in S_i$ for all
$1 \leq i \leq 5$. Then there are subsets $S_j \subset N$,
for $5 \leq j \leq n$ and there is a partition 
$P(S_1, \ldots ,S_n)$ of $N$ into two disjoint parts $N_1,N_2$ 
satisfying (\ref{e511}) such that $R$ intersects both $N_1$ and $N_2$.
\vspace{0.2cm}

\noindent
{\bf Proof:}\, Color the elements of $N$ randomly red and blue, where each
$i \in N$ randomly and independently is red with probability $1/2$ and
blue with probability $1/2$. The probability that all members of $R$
have the same color is $1/16$. For each fixed $i \leq 5$, the probability
that the color of $i$ is different than that of all elements in $S_i$
is $1/8$. Therefore, with probability at least $1-1/16-5/8>0$ none of these
events happens. Hence there is a coloring in which $R$ contains both
red and blue elements, and every $i \in R$ has at least one member
of $S_i$ with the same color as $i$. Fix such a coloring.
Without loss of generality
$1$ is colored red and $2$ is colored blue.
Let $N_1$ be the set of all
elements colored red and let $N_2$ be the set of all elements colored
blue. For each $j \in N_1-R$ let $S_j$ contain $1$ and for each
$j \in N_2-R$ let $S_j$ contain $2$. It is easy to see that the partition
$N=N_1 \cup N_2$ satisfies (\ref{e511}) but $R$ intersects both
$N_1$ and $N_2$, completing the proof. \hfill $\Box$
\vspace{0.2cm}

\noindent
Note that the above proof does not work for $r \geq 8$, thus the proof
of Theorem \ref{t511} requires a different method, which we show next.
\vspace{0.2cm}

\noindent
{\bf Proof of Theorem \ref{t511}:}\,
The case $k \leq 2$ is very simple. For 
$k=1$ simply define $S_i=\{(i+1)(\bmod~~r)\}$ to see that
the coalition $R=\{1,2,\ldots ,r\}$ is successful.
For $k=2$ and $r=2$, $S_1=\{2,3\}$, $S_2=\{1,3\}$ show that
$\{1,2\}$ is successful.  For 
any larger $r$ add to the above
$S_i=\{1,2\}$ for all $3 \leq i \leq r$.
\vspace{0.2cm}

\noindent
The more interesting part is the proof that for $k\geq 3$ no coalition
of any size $r>1$ can be successful.
The case $r<k$ here is simple. 
One possible proof is to repeat the probabilistic
argument described above for the case $k=3,r=5$. 
Since for $1<r <k$, $k \geq 3$,
$$
\frac{1}{2^{r-1}} + \frac{r}{2^k} \leq \frac{1}{2}+\frac{k-1}{2^{k}}
\leq \frac{1}{2}+\frac{2}{8} <1
$$
the result follows as before. 
(It is also possible to give a direct simple proof
for this case).

For $k \geq 3$, $r \geq k$ consider the digraph whose 
set of vertices is $N$, where for each vertex $i$ and each
$j \in S_i$, $ij$ is a directed edge.  Thus every outdegree in this digraph
is exactly $k$.
Given the sets
$S_1, \ldots ,S_r$ 
of outneighbors of the vertices in $R=\{1,2, \ldots ,r\}$ 
(representing the children attempting
to form a successful coalition), define the sets $S_j$ for $j >r$
in such a way that the induced subgraph on $N-R$ is acyclic.
(For example, we can define $S_j=\{1,2,\ldots ,k\}$ for each
$j>r$, or $S_j=\{j-1, j-2, \ldots, j-k\}$ for each $j > r$. Note that
here we used the fact that $r \geq k$).

The crucial result we use here is a theorem of Thomassen
(\cite{Th}, see also \cite{Al50}
for an extension). This Theorem asserts that any digraph with minimum
outdegree at least $3$ contains two vertex disjoint cycles. Let 
$A$ and $B$ be the sets of vertices of these two cycles. 
Note that both $A$ and $B$ must contain
a vertex of $R$ (as $N-R$ contains no directed cycles).
Let $A', B'$ be two sets of 
vertices satisfying $A \subset A'$, $B \subset B'$ with 
$|A'|+|B'|$ maximum subject to the constraint that
every outdegree in $A'$ is at least $1$ and every outdegree in $B'$ is
at least $1$. We claim that $A' \cup B'$ is the set $N$ of 
all vertices.
Indeed, otherwise, every $v$ in $C=N-(A' \cup B')$ has no outneighbors in 
$A' \cup B'$ (otherwise we could have added it to
either $A'$ or $B'$ contradicting maximality), 
so has at least $k \geq 3>1$
outneighbors in $C$ and then we can replace $A'$ by 
$A' \cup C$ contradicting
maximality. This proves the claim. The assignment to two groups is now 
$N_1=A'$ and $N_2=B'$. Since both $A \subset A'$ and $B \subset B'$ contain
elements of $R$, this shows that $R$ is not a successful coalition,
completing the proof.  \hfill $\Box$
\subsection{Variants}
\begin{enumerate}
\item
What if every child is ensured to have at least 
two of his choices with him in his class ? In this case, even if
$k$ is arbitrarily large (but $r$ is much larger) we do not know
to prove that a
coalition of $r$ cannot ensure they are all in the same group.
This is identical to one of the open questions in  \cite{Al51},
which is the following.
\vspace{0.2cm}

\noindent
{\bf Question:}\, Is there a finite 
positive integer $k$ such that every digraph in
which all oudegrees are (at least) $k$ contains 
two vertex disjoint subgraphs,
each having minimum outdegree at least $2$ ?
\vspace{0.2cm}

\noindent
On the other hand it is easy to see that this is impossible if
$\frac{1}{2^{r-1}}+ \frac{r (1+k)}{2^{k}}<1$. Indeed, if so 
we can split the group of
children randomly into two sets, 
red and blue. With positive probability the 
specific set of $r$ children trying to form a coalition is not
monochromatic, and also for any child in the coalition there are at 
least two of his choices in his group. We can now fix the choices of
all others outside the coalition to ensure they will also be happy with this
partition. It follows that if in this version of the problem
a successful  coalition
of size $r$ is possible, then $r$ has to be at least exponential in
$k$.
\item
Suppose we change the rules, and each child lists
$k$ other children that he does {\em not} like, and wishes 
not to have many of them
in his class. It can then be shown that for any $k$ there is an example
of choices of the children in which each one lists $k$ others he 
prefers to avoid, so that in any partition of the group of children
into $2$ classes, there will always be at least one poor 
child sharing the same class with all the $k$ he listed !  This is based
on another result of Thomassen \cite{Th1}: for every $k$ there is a 
digraph with minimum outdegree $k$ which contains no even directed cycle.
If $D=(N,E)$ 
is such a digraph, and $N=V_1 \cup V_2$ is a partition of its 
vertex set into two disjoint parts, then, as observed in
\cite{Al51}, there is a vertex in one 
of the classes having all its out-neighbors  in the same class.
Indeed, otherwise, starting at an arbitrary vertex $v_1$ we can define
an infinite sequence $v_1,v_2, v_3, \ldots$, where each pair
$(v_i,v_{i+1})$ is a directed edge with one end in $V_1$ and one 
in $V_2$. As the graph is finite, there is a smallest $j$ such 
that there is $i<j$ with $v_i=v_j$, and the cycle
$v_i,v_{i+1},\ldots,v_j=v_i$ is even, contradiction.
On the other hand, by splitting the group of children into
$s \geq 3$  disjoint groups, we  can always ensure that each child
will have in his own class at most $2k/s$ of the $k$ children he 
wants to avoid. This follows from a result of Keith Ball described
in \cite{Al51}.
\end{enumerate}

\section{$\ell_1$-balls and projections of linear codes}

A remarkable known property of the Binomial distribution $Bin(n,p)$ is
that its median is always either the floor or the ceiling of its
expectation $np$. In particular, if the expectation is an integer
then this is also the median. The following more general result 
is proved by
Jogdeo and Samuels in \cite{JS}.
\begin{theo}[\cite{JS}, Theorem 3.2 and Corollary 3.1]
\label{t611}
Let $X=X_1+X_2+ \ldots +X_n$ be a sum of independent indicator random
variables where for each $i$, $Pr(X_i=1)=p_i$ and
$Pr(X_i=0)=1-p_i$. Then the median of $X$ is always the floor or
the ceiling of its expectation $\sum_{i=1}^n p_i$.
\end{theo}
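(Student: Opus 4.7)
The plan is to establish the two complementary tail inequalities
$$P(X \leq \lceil \mu \rceil) \geq 1/2 \quad \text{and} \quad P(X \geq \lfloor \mu \rfloor) \geq 1/2,$$
where $\mu = \sum_{i=1}^n p_i$; together these force every median of $X$ to lie in $\{\lfloor \mu \rfloor, \lceil \mu \rceil\}$. The symmetry $X_i \mapsto 1-X_i$ sends $X$ to $n-X$ and $\mu$ to $n-\mu$, and interchanges the two inequalities, so it suffices to prove one of them. By a standard continuity/limiting argument we may assume $\mu$ is not an integer, and we set $k = \lfloor \mu \rfloor$.

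First I would record the log-concavity of the PMF $f(j) = P(X=j)$. Since the probability generating function $G(z) = \prod_{i=1}^n (1-p_i + p_i z)$ has only real (negative) roots, Newton's inequalities imply that the sequence $(f(j))$ is strictly log-concave, and in particular unimodal; the ratio $f(j+1)/f(j)$ is strictly decreasing in $j$. A direct computation using this monotonicity shows that the mode of $f$ lies in $\{k,k+1\}$, which is essentially Darroch's theorem.

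The crux is to pass from unimodality of $f$ to a statement about tail probabilities. I would use a Hoeffding-style averaging: replacing a pair $(p_i,p_j)$ by $((p_i+p_j)/2, (p_i+p_j)/2)$ preserves $\mu$ and is known to act in a Schur-concave manner on the point probabilities $f(j)$ near the mean. Iterating such replacements, one reduces to the binomial case $X \sim \mathrm{Bin}(n,\mu/n)$, for which the claim is classical: it follows by comparing consecutive binomial terms $\binom{n}{j} p^j(1-p)^{n-j}$ and noting that the ratio crosses $1$ exactly between $k$ and $k+1$, so that each of the cumulative sums $\sum_{j\leq k} f(j)$ and $\sum_{j\geq k+1} f(j)$ exceeds $1/2$ by an elementary identity.

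The main obstacle I foresee is the transfer step: Schur-concavity controls individual point probabilities cleanly, but not tail sums, so one must verify that the averaging operation does not drag the median outside $\{k,k+1\}$ at any intermediate stage. I would address this via a sign analysis of the difference $\sum_{j \leq k}(f(j) - \tilde f(j))$ produced by a single averaging, leveraging log-concavity to control the sign pattern. If that route turns out to be fragile, the fallback is a direct induction on $n$, adding one $p_{n+1}$ at a time and tracking how $P(X\leq k)$ and $P(X\leq k+1)$ shift as $\mu$ moves by $p_{n+1}$; this is essentially the route pursued in \cite{JS}.
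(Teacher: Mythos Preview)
The paper does not contain a proof of this statement. Theorem~\ref{t611} is quoted verbatim from Jogdeo and Samuels \cite{JS} and used as a black box in the proofs of Theorem~\ref{t621} and Claim~\ref{c636}; the paper never attempts to reprove it. So there is no ``paper's own proof'' to compare your proposal against.

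As for the proposal itself, it is a sketch rather than a proof, and you say as much. Two concrete issues are worth flagging. First, in your binomial paragraph you write that ``each of the cumulative sums $\sum_{j\le k} f(j)$ and $\sum_{j\ge k+1} f(j)$ exceeds $1/2$''; these two sums are complementary and add to $1$, so this is impossible as stated. What you need is $\sum_{j\ge k} f(j)\ge 1/2$ and $\sum_{j\le k+1} f(j)\ge 1/2$, which overlap at $j=k,k+1$. Second, and more substantively, the averaging/Schur step you propose is exactly the delicate part: Hoeffding's result gives convex-order comparison of the whole distributions, which controls expectations of convex test functions but not indicator tails like $\mathbf{1}_{\{X\le k\}}$. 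Your own diagnosis that ``Schur-concavity controls individual point probabilities cleanly, but not tail sums'' is correct, and the sign-pattern analysis you gesture at would have to be carried out in full to make this route work. Your fallback---induction on $n$ tracking the shift in $P(X\le k)$---is precisely the argument in \cite{JS}, so invoking it amounts to citing the original proof.
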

This theorem can be used to derive several interesting results.
Here we describe one quick application and another more complicated
one in which it is convenient (though not absolutely necessary) to
use it, combined with several additional ingredients.

\subsection{ $\ell_1$-balls and Hamming balls in the discrete cube}

If $n$ is even, $d=n/2$ and $x=(1/2,1/2, \ldots ,1/2)$ is the
center of the $n$-dimensional real unit cube $[0,1]^n$, then the
$\ell_1$-ball of radius $d$ centered at $x$ contains all the
$2^n$ 
points of the discrete cube $\{0,1\}^n$. On the other hand, any
Hamming ball of radius $d$ centered at a vertex $y$ of this
discrete cube
contains only $\sum_{i=0}^d  {n \choose i} =(\frac{1}{2}+o(1))
2^n$ points of the cube, where the $o(1)$-term tends to $0$ as $n$
tends to infinity. Madhu Sudan \cite{Su} asked me whether a similar
bound holds for any $\ell_1$-ball of integral radius. 
The precise statement of the question is as follows:
\vspace{0.1cm}

\noindent
Is it true that for any positive integer  $d$ and for any $\ell_1$-ball
$B$
(centered at any real point in $R^n$) there is a Hamming ball
of the same radius $d$ centered at a point in  $\{0,1\}^n$ that 
contains at least half the points in $B \cap \{0,1\}^n$ ?

The following stronger result shows that this is indeed the case.
\begin{theo}
\label{t621}
For any real $x=(x_1,x_2, \ldots ,x_n)$ in $R^n$ and 
for any subset $A$ of points of
$B(x,d) \cap \{0,1\}^n$, where $B(x,d)$ is the $\ell_1$-ball of
radius $d$
centered
at $x$, and $d$ is an integer, there is $y \in \{0,1\}^n$ so that
$|A \cap B(y,d)| \geq |A|/2.$
\end{theo}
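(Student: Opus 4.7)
The plan is a straightforward probabilistic argument: produce the point $y \in \{0,1\}^n$ at random from a product distribution depending on $x$, and show that each $a \in A$ lies in $B(y,d)$ with probability at least $1/2$; then linearity of expectation delivers some deterministic $y$ with $|A \cap B(y,d)| \geq |A|/2$.

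First I would reduce to the case $x \in [0,1]^n$. For each coordinate $i$, replacing $x_i$ by $\max(0, \min(1, x_i))$ can only decrease $|a_i - x_i|$ for any $a_i \in \{0,1\}$, and so can only decrease $\|a - x\|_1$. Consequently $B(x,d) \cap \{0,1\}^n$ is contained in the analogous intersection for the clamped point, and $A$ remains inside the new $\ell_1$-ball of radius $d$. So I may assume $x \in [0,1]^n$.

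Now let $y = (y_1,\ldots,y_n)$ have independent coordinates with $\Pr[y_i = 1] = x_i$ and $\Pr[y_i = 0] = 1-x_i$. For any fixed $a \in \{0,1\}^n$, the quantity $|a_i - y_i|$ is itself a $\{0,1\}$-valued indicator: it equals $1$ exactly when $a_i \neq y_i$, which happens with probability $x_i$ if $a_i = 0$ and with probability $1 - x_i$ if $a_i = 1$ — in either case the probability equals $|a_i - x_i|$. Thus $\|a - y\|_1 = \sum_{i=1}^n |a_i - y_i|$ is a sum of independent indicators whose expectation is exactly $\|a - x\|_1$.

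At this point I would invoke Theorem \ref{t611}: the median of $\|a - y\|_1$ is either $\lfloor \|a - x\|_1 \rfloor$ or $\lceil \|a - x\|_1 \rceil$. For any $a \in A$ one has $\|a - x\|_1 \leq d$, and since $d$ is an integer this forces $\lceil \|a - x\|_1 \rceil \leq d$, so the median is at most $d$. Hence $\Pr[\|a - y\|_1 \leq d] \geq 1/2$, i.e.\ $\Pr[a \in B(y,d)] \geq 1/2$. Linearity of expectation gives $E[\,|A \cap B(y,d)|\,] \geq |A|/2$, so some particular $y \in \{0,1\}^n$ achieves at least $|A|/2$, as required. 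The only non-routine ingredient is Theorem \ref{t611}; a naive Markov-type bound from $E[\|a-y\|_1] \leq d$ would be far too weak, so the tightness of the median-vs-mean statement for sums of independent Bernoullis is really the crux.
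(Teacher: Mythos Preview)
Your argument is correct and matches the paper's proof essentially line for line: clamp $x$ into $[0,1]^n$, choose $y$ coordinatewise with $\Pr[y_i=1]=x_i$, observe that $\|a-y\|_1$ is a sum of independent indicators with mean $\|a-x\|_1\le d$, invoke the Jogdeo--Samuels median result (Theorem~\ref{t611}) to get $\Pr[\|a-y\|_1\le d]\ge 1/2$, and finish by linearity of expectation. Your write-up is in fact slightly more explicit than the paper's in spelling out why $\Pr[|a_i-y_i|=1]=|a_i-x_i|$ and why the integrality of $d$ forces the median to be at most $d$.
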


\begin{proof}
Note, first, that we may assume that $x_i \in [0,1]$ for all $i$.
Indeed, otherwise, replace $x_i$ by $1$ if $x_i>1$ and by $0$ if 
$x_i<0$. This modification only decreases the  $\ell_1$-distance
between $x$ and any point in $\{0,1\}^n$. Therefore $A$ is a subset of
the ball $B(x,d)$ for the modified vector $x$ too. We thus may and will
assume that $x \in [0,1]^n$. 
Let $y=(y_1,y_2, \ldots ,y_n)$ be a random binary vector
obtained by choosing, for each i, randomly and
independently, $y_i$ to be $1$ with probability $x_i$ and 
$0$ with probability
$(1-x_i)$. For each point $a \in A$, the $\ell_1$-distance 
between $y$ and $a$ is a
random variable which is a sum of independent Bernoulli random
variables  and its
expectation
is exactly the $\ell_1$ distance between $a$ and $x$, which is at most 
$d$.
By Theorem \ref{t611} of Jogedo and Samuels stated above
the probability that this random
variable is at most $d$ is at least a half. It follows by linearity of
expectation that the expected number of
points of $A$ within distance at most $d$ from $y$ is at least
$|A|/2$, and
thus there is a $y$ as needed.
\end{proof}

\subsection{Random projections of linear codes}

Let $F$ be a finite or infinite field, and let $V$ be a linear code
of length $n$, dimension $k$ and minimum relative distance 
at least $\delta$
over $F$. Thus $V$ is a subspace of dimension $k$ of $F^n$, and the
number of nonzero coordinates of any nonzero codeword $v \in V$
is at least $\delta  n$. Let $m$ be an integer. A 
projection of $V$ on $m$ random coordinates is obtained by selecting a
random (multi)set $I$ of $m$ coordinates of $[n]$, chosen with
repetitions. With this random choice of $I$ let $V_m \subset F^m$ 
be the vector space over $F$ consisting  of all
vectors $\{ (v_i)_{i \in I}~: ~ v=(v_1,v_2, \ldots ,v_n) \in V\}$.
One may expect that if $m$ is large, then typically the
vector space $V_m$, considered as a linear code of length $m$ over
$F$, will have dimension $k$ and minimum distance not much smaller
than $\delta m$. This is easy to prove by a standard application of 
Chernoff's Inequality and the union bound, provided
$m$ is sufficiently large as a function of $|F|,k$ and
$\delta$. It is, however, not clear at all 
that this is the case for $m$ of size
independent of the size of the field $F$ (which may even be
infinite).  Such a statement is proved by Saraf and Yekhanin in
\cite{SY}.
\begin{theo}[\cite{SY}, Theorem 3]
\label{t631}
Let $V$ be a linear code of dimension $k$, length $n$ and minimum
relative distance $\delta$ over an arbitrary field $F$.
If $m$ is at least $c(\delta) k$ and $V_m$ is a projection of
$V$ on $m$ random coordinates
then with probability at least
$1-e^{-\Omega(\delta m)}$ the dimension of $V_m$ is $k$ and its
minimum distance is at least $\delta m/8$.
\end{theo}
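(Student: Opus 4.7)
Represent $V$ by a $k\times n$ generator matrix $G$ with columns $g_1,\ldots,g_n\in F^k$. Every nonzero codeword has the form $x^{\top}G$ for some nonzero $x\in F^k$, and its Hamming weight equals $|\{i:\langle x,g_i\rangle\ne 0\}|$. The minimum-distance hypothesis thus becomes a purely geometric statement about the columns: for every nonzero $x\in F^k$, at most $(1-\delta)n$ of the $g_i$ lie in the hyperplane $x^{\perp}$. After sampling the multiset $I=(i_1,\ldots,i_m)$ uniformly with replacement from $[n]$, the projection $V_m$ is generated by $g_{i_1},\ldots,g_{i_m}$, and both conclusions of the theorem follow at once from the uniform claim that, for every nonzero $x$, fewer than $(1-\delta/8)m$ of the sampled columns lie in $x^{\perp}$.

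The bad event depends on $x$ only through the subset $A(x):=\{i\in[n]:g_i\in x^{\perp}\}$, so one replaces a union bound over the (potentially infinite) set $V\setminus\{0\}$ by a union bound over the set system $\mathcal{A}:=\{A(x):0\ne x\in F^k\}$ on the ground set $[n]$, each of whose members has relative size at most $1-\delta$. The key combinatorial input is that $\mathcal{A}$ has a \emph{polynomial shatter function}: every $A(x)$ is determined by a basis of $\operatorname{span}\{g_i:i\in A(x)\}$, a subspace of $F^k$ of dimension at most $k-1$, so for every $M$-element subset $J\subseteq[n]$ there are at most $\binom{M}{\le k-1}\le M^k$ distinct traces $A\cap J$. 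In particular, the VC-dimension of $\mathcal{A}$ is at most $k$.

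Given this, I would conclude via a standard Vapnik--Chervonenkis uniform-convergence argument. For a fixed $A\in\mathcal{A}$, the random variable $Y_A:=|\{j:i_j\notin A\}|$ is a sum of i.i.d.\ Bernoulli indicators with mean at least $\delta m$, and a multiplicative Chernoff inequality (applied to a deviation which is a constant fraction below the mean) yields $\Pr[Y_A<\delta m/8]\le e^{-\Omega(\delta m)}$; here Theorem \ref{t611} of Jogdeo--Samuels enters as a particularly clean concentration statement for such binomials, playing the ``convenient but not absolutely necessary'' role the author mentions. Combining this single-set bound with the shatter-function estimate via the classical symmetrization / double-sampling trick gives an overall failure probability at most $(2em/k)^k\cdot e^{-\Omega(\delta m)}$, and choosing $m\ge c(\delta)k$ for a sufficiently large $c(\delta)$ absorbs the polynomial prefactor and leaves the desired bound of $e^{-\Omega(\delta m)}$. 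On the resulting good event every nonzero codeword retains at least $\delta m/8$ nonzero sampled coordinates, forcing $\dim V_m=k$ and $d(V_m)\ge\delta m/8$.

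The main obstacle is precisely this uniformity issue over an infinite family of codewords when $|F|=\infty$; the reduction to the set system $\mathcal{A}$ of hyperplane-section patterns, whose shatter function is bounded in terms of $k$ alone, is exactly what makes the final sample size depend only on $\delta$ and $k$ rather than on $n$ or $|F|$. The remaining technical care is to choose the correct multiplicative Chernoff tail so that the exponent in the single-set bound is $\delta m$ rather than $\delta^{2}m$, exploiting that the threshold $\delta m/8$ sits a \emph{constant factor} below the expectation $\delta m$.
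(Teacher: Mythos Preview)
The paper does not give its own proof of Theorem~\ref{t631}: that result is quoted from \cite{SY}, and the paper's contribution is the sharper Theorem~\ref{t632}, where the relative distance $\delta/8$ is replaced by $\frac{B-1}{B+1}\delta$ for any fixed $B>2$. Your plan---pass to the set system of (complements of) supports of codewords, show it has VC-dimension at most $k$, and then run a symmetrization/double-sampling argument together with the Sauer--Perles--Shelah bound---is exactly the method the paper uses to establish Theorem~\ref{t632} (via Claim~\ref{cl633} and Proposition~\ref{p634}). So your proposal is correct and in fact reproduces the paper's \emph{improvement} rather than the original \cite{SY} argument; the paper explicitly remarks that the factor-of-$8$ loss ``is inherent in the approach of \cite{SY}'', so that proof presumably proceeds along different lines.

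Two small points of alignment with what the paper actually does. First, the Jogdeo--Samuels result (Theorem~\ref{t611}) is not used as a tail bound for the single-set Chernoff step; it enters inside the symmetrization (the paper's Claim~\ref{c636}), where after drawing the ghost sample $T$ one needs $\Pr\bigl[|C\cap T|\ge\lfloor B\eps m\rfloor\bigr]\ge\tfrac12$, i.e.\ precisely the statement that the median of a binomial is at least the floor of its mean. Second, your direct shatter-function bound---each trace $A(x)\cap J$ is recovered from the $(\le k-1)$-dimensional span of the corresponding columns, hence at most $\binom{|J|}{\le k-1}$ traces---is a slightly more hands-on variant of the paper's route (VC-dimension $\le k$ by Claim~\ref{cl633}, then Sauer--Perles--Shelah), and gives the same bound. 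Working out the arithmetic in your final step yields $c(\delta)$ of order $\frac{1}{\delta}\log\frac{1}{\delta}$, matching what the paper reports for \cite{SY}.
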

One can check that the estimate the proof in \cite{SY} provides for
$c(\delta)$ is $b \frac{\log (1/\delta)}{\delta}$ for a
sufficiently large absolute constant $b$. Note, however, that the
minimum relative distance obtained is only $\delta /8$, this loss in the
minimum relative distance is inherent in the approach of \cite{SY}.

Here we show how to apply some of the techniques in the study of
$\eps$-nets and $\eps$-approximations in range spaces with
finite Vapnik-Chervonenkis dimension to get an improved  version of
the above theorem in which the relative minimum distance obtained
can be arbitrarily close to $\delta$. 
\begin{theo}
\label{t632}
There exists an absolute positive constant $c$ so that the
following holds. Let $B>2$ be an integer, and let
$V$ be a linear code of dimension $k$, length $n$ and minimum
relative distance $\delta$ over an arbitrary field $F$.
If $m$ is at least 
$c\frac{B^2 k}{\delta} \log (B/\delta)$
and $V_m$ is a projection of
$V$ on $m$ random coordinates
then with probability at least
$1-e^{-\Omega(\delta m/B^2)}$ the dimension of $V_m$ is $k$ and
its minimum distance 
is at least $(\frac{B-1}{B+1})\delta m$.
\end{theo}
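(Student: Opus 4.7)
The plan is to reduce Theorem \ref{t632} to a sampling statement for the set system $\Sigma=([n],\mathcal{R})$ whose ranges are the supports of nonzero codewords of $V$. The weight of any codeword $v$ in the projection $V_m$ equals $|\mathrm{supp}(v)\cap I|$, where $I=(i_1,\ldots,i_m)$ is the random multiset, so it depends only on $I$ and on $\mathrm{supp}(v)\in\mathcal{R}$. Hence it suffices to show that, with probability at least $1-e^{-\Omega(\delta m/B^{2})}$, every $R\in\mathcal{R}$ satisfies
$$
\frac{|R\cap I|}{m}\ \geq\ \frac{B-1}{B+1}\cdot\frac{|R|}{n}.
$$
Since $V$ has minimum relative distance $\delta$, each such $R$ has $|R|/n\geq\delta$, yielding the claimed minimum distance $\tfrac{B-1}{B+1}\delta m$; moreover no nonzero codeword then projects to zero, so the projection is injective and $\dim V_m=k$.

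The first structural step is to show that $\Sigma$ has VC dimension at most $k$ over any field $F$. Identify each coordinate $j$ with the evaluation functional $\mathrm{ev}_j\in V^{*}\cong F^{k}$ sending $v\mapsto v_j$. Then for a nonzero $v\in V$, $\mathrm{supp}(v)^{c}=\{j:\mathrm{ev}_j\in v^{\perp}\}$, so $\mathcal{R}^{c}$ is the trace on $\{\mathrm{ev}_1,\ldots,\mathrm{ev}_n\}$ of the family of hyperplanes through the origin of $F^{k}$. A classical arrangement/zero-pattern argument shows that any $k+1$ linear functionals in $F^{k}$ realize strictly fewer than $2^{k+1}$ distinct zero/nonzero patterns (in fact at most $2^{k+1}-1$), so no $k+1$ coordinates can be shattered. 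Since VC dimension is invariant under complementation, $\mathrm{VCdim}(\Sigma)\leq k$.

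Next I would invoke a relative $(\eta,\rho)$-approximation theorem (in the spirit of Li--Long--Srinivasan or Har-Peled--Sharir): for any range space of VC dimension at most $k$, a uniform i.i.d.\ multi-sample $I$ of size
$$
m\ \geq\ \frac{C}{\eta^{2}\rho}\bigl(k\log(1/\rho)+t\bigr)
$$
is, with probability at least $1-e^{-t}$, a relative $(\eta,\rho)$-approximation, in the sense that every range $R$ with $|R|/n\geq\rho$ obeys $|R\cap I|/m\geq(1-\eta)|R|/n$. Apply this with $\rho=\delta$ and $\eta=2/(B+1)$; the hypothesis $m\geq cB^{2}k/\delta\cdot\log(B/\delta)$ dominates the VC term $k\log(1/\delta)$ and leaves slack $t=\Omega(\delta m/B^{2})$, so the failure probability is $e^{-\Omega(\delta m/B^{2})}$. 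On the good event each support $R$ of relative size at least $\delta$ has $|R\cap I|/m\geq(1-2/(B+1))\delta=\tfrac{B-1}{B+1}\delta$, concluding the proof.

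The main obstacle is the VC dimension step: a naive application of Sauer--Shelah via the $|F|^{k}$ codewords gives only $\mathrm{VCdim}\leq k\log_{2}|F|$, useless for infinite or large $F$, so one really needs the algebraic zero-pattern bound to obtain a VC dimension depending only on $k$. A secondary point is choosing the \emph{relative} (rather than absolute) $\epsilon$-approximation, since only the former delivers the factor $1/\delta$ in place of $1/\delta^{2}$ in the required sample size, together with the matching tail $e^{-\Omega(\eta^{2}\rho m)}=e^{-\Omega(\delta m/B^{2})}$ rather than the weaker $e^{-\Omega(\eta^{2} m)}$ from the basic VC approximation.
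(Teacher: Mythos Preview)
Your plan is correct and follows the same skeleton as the paper: pass to the range space of supports of nonzero codewords, bound its VC dimension by $k$, and then apply a one-sided relative $\epsilon$-approximation bound with $\rho=\delta$ and $\eta=2/(B+1)$. The two differences are in execution rather than strategy. First, for the VC bound the paper gives a cleaner direct argument than your zero-pattern/dual formulation: if a set $J$ of coordinates is shattered by supports, then for each $j\in J$ there is a codeword $v^{(j)}$ with $v^{(j)}_j\neq 0$ and $v^{(j)}_i=0$ for all $i\in J\setminus\{j\}$; these $|J|$ codewords are visibly linearly independent, so $|J|\le k$. This is the same fact you are invoking, but without the detour through hyperplane arrangements. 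Second, where you cite a Li--Long--Srinivasan / Har-Peled--Sharir relative-approximation theorem as a black box, the paper reproves exactly the one-sided statement it needs (Proposition~\ref{p634}) from scratch via the classical double-sampling trick: draw an auxiliary sample $T$ of size $Bm$, use the Jogdeo--Samuels median result to compare the two events, and then apply Sauer--Shelah on $X\cup T$ together with a hypergeometric tail bound. Your route is shorter if one is willing to import the relative-approximation machinery; the paper's route is self-contained and makes the dependence on $B$ explicit at each step.
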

Taking $B$ to be a large fixed constant  we get that typically the
minimum relative distance of $V_m$ is close to  $\delta$, and the
estimates for $m$ and for the failure probability are essentially
as in Theorem \ref{t631}. 

We start with a quick reminder of the relevant facts about VC-dimension.
The {\em Vapnik-Chervonenkis dimension} $VC(\CC)$ of a (finite) 
family of binary vectors 
$\CC$ is the maximum cardinality of a set of coordinates $I$ such
that for every binary vector $(b_i)_{i \in I}$ there is a  $C \in
\CC$ so that $C_i=b_i$ for all $i \in I$.  (In this case we say
that the set $I$ is {\em shattered} by $\CC$).
Suppose the vectors
in the family are of length $n$. An {\em $\eps$-net} for the family is
a subset $I \subset [n]$ such that for every $C \in \CC$ of Hamming
weight at least $\eps n$ there is an $i \in I$ so that
$C_i=1$. An $\eps$-approximation for the family is
a sub(multi)set $I \in [n]$ so that for every $C \in \CC$
$$
| ~\frac{|\sum_{i=1}^n C_i|}{n}-
\frac{|\sum_{i \in I}^n C_i|}{|I|} ~| < \eps.
$$
A basic result proved by Vapnik and Chervonenkis \cite{VC} (with
a logarithmic improvement by Talagrand \cite{Ta}), is that if 
$VC(\CC) \leq d$ then a random set of $\Theta(\frac{d}{\eps^2})$
coordinates is typically an
$\eps$-approximation.
A similar result, proved by Haussler and
Welzl \cite{HW}, is that for such a  $\CC$ a random set of
$\Theta(\frac{d}{\eps} \log (1/\eps))$ coordinates is typically an
$\eps$-net. 
Another basic combinatorial result is the Sauer-Perles-Shelah
Lemma: if $VC(\CC) \leq d$ then the number of distinct projections of 
the set of vectors in $\CC$ on any set of $t$ coordinates is at
most $g(d,t)=\sum_{i=0}^d {t \choose i}.$ 

The relevance of the VC-dimension to projections of linear codes
is the following simple observation.
\begin{claim}
\label{cl633}
Let $F$ be an arbitrary field, and 
let $V \subset F^n$ be a linear subspace of dimension $k$ over $F$.
For each vector $v \in C$ let $C=C(v)$ denote the indicator vector
of the support of $v$, that is, $C_i=1$ if $v_i \neq 0$ and
$C_i=0$ is $v_i=0$. Put $\CC=\{C(v): v \in V\}$. Then $VC(\CC)
= k$. 
\end{claim}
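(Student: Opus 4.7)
The plan is to prove the two bounds $VC(\CC) \leq k$ and $VC(\CC) \geq k$ separately, both via short linear-algebraic arguments that do not depend on the size of $F$.

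For the upper bound, I would assume a set $I \subseteq [n]$ is shattered by $\CC$ and produce $|I|$ linearly independent codewords. Concretely, for each $i \in I$ the binary pattern on $I$ which is $1$ at coordinate $i$ and $0$ elsewhere must be realized by some codeword $v^{(i)} \in V$; this means $v^{(i)}_i \neq 0$ while $v^{(i)}_j = 0$ for every $j \in I \setminus \{i\}$. The restrictions $v^{(i)}|_I$ therefore form a ``diagonal'' family in $F^I$ and are linearly independent there, which forces the $v^{(i)}$'s themselves to be linearly independent in $V$. Hence $|I| \leq \dim V = k$.

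For the lower bound, I would use the standard fact that since $\dim V = k$, there is a set $I \subseteq [n]$ of size exactly $k$ on which the projection $\pi_I : V \to F^I$, $v \mapsto (v_i)_{i \in I}$, is a linear isomorphism (take $k$ coordinates corresponding to $k$ linearly independent columns of a generator matrix of $V$). To show that such an $I$ is shattered, let $(b_i)_{i \in I} \in \{0,1\}^I$ be an arbitrary binary pattern, define $c \in F^I$ by $c_i = 1_F$ if $b_i = 1$ and $c_i = 0$ if $b_i = 0$, and invoke the surjectivity of $\pi_I$ to obtain $v \in V$ with $\pi_I(v) = c$. The support indicator $C(v)$ then agrees with $(b_i)_{i \in I}$ on $I$, yielding $VC(\CC) \geq k$.

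There is no substantive obstacle here: both halves reduce to elementary linear algebra over an arbitrary field. The only point that deserves attention is that the argument must work regardless of $|F|$ (even infinite); this is handled in the lower bound by noting that shattering with respect to supports only requires distinguishing the values $0$ and $\neq 0$ on the chosen coordinates, which is always achievable by prescribing zero versus any nonzero field element and lifting through the isomorphism $\pi_I$.
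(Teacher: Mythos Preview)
Your proposal is correct and follows essentially the same route as the paper: the upper bound is identical (produce, for each coordinate of a shattered set, a codeword supported there alone among those coordinates, giving a diagonal and hence independent family), and your lower bound via an information set $I$ with $\pi_I:V\to F^I$ an isomorphism is just a rephrasing of the paper's construction of $k$ vectors acting as the identity on $I$ and taking $\{0,1\}$-linear combinations.
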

\begin{proof}
Since the dimension of $V$ is $k$ it contains a set of $k$ vectors 
$v^{(i)}$ 
such that there is a set $I=\{i_1, i_2, \ldots ,i_k\}$ 
of $k$ coordinates so that $v^{(i)}_{i_j}$ is $1$ for  $i=j$ and
$0$ otherwise. The supports of the set of all linear combinations 
with $\{0,1\}$-coefficients 
of these vectors shatter the set $I$, implying that
$VC(\CC) \geq k$. Conversely, if there is a set of coordinates $J$
shattered by the vectors in $\CC$, then for each $j \in J$ there is
a vector in $V$ with $v_j \neq 0$ and $v_i=0$ for all $i \in J-j$.
These $|J|$ vectors are clearly linearly independent, implying that
$|J| \leq k$ and completing the proof.
\end{proof}

The above claim and the known result stated above about
$\eps$-approximation for families of vectors with finite
$VC$-dimension suffice to prove a version of Theorem \ref{t632}
with $m=\Theta(\frac{B^2 k}{\delta^2})$. Indeed, we simply consider
a $2\delta/(B+1)$-approximation for the set $\CC$ corresponding
to $V$. Similarly, the result about $\eps$-nets shows that
typically the dimension of $V_m$ is $m$.

In order to prove the improved estimate for $m$ stated in
the theorem we show that in the setting
here the bound can be improved to be closer to that in the
theorem about $\delta$-nets. This is proved in the following
result, which applies to general collections of vectors with 
a bounded VC-dimension.
\begin{prop}
\label{p634}
There exists an absolute positive constant $c>1$ such that the
following holds. 
Let $\CC$ be a family of binary vectors of length $n$, and assume
that $VC(\CC) \leq d$. Let $X$ be a random multiset of $m$
coordinates, with $m=c \frac{B^2 d }{\eps} \log (B/\eps)$, where
$B>2$ is an integer. Then
with probability at least $1-e^{-\Omega(\eps m/B^2)}$, for every
$C \in \CC$ satisfying $\sum_{i=1}^n C_i \geq \eps n$
we have $\sum_{i \in X} C_i \geq \frac{B-1}{B+1}\eps m.$
\end{prop}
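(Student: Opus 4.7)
The plan is to prove Proposition \ref{p634} via the classical Vapnik--Chervonenkis double-sampling argument, with the Sauer--Perles--Shelah lemma controlling the effective number of test vectors, Theorem \ref{t611} playing the role of the symmetrization inequality, and a one-sided hypergeometric Chernoff estimate producing the sharp $\tfrac{B-1}{B+1}$ relative error. Introduce an independent copy $Y$ of $X$, another random $m$-multiset. Call $X$ \emph{bad} if some $C\in\CC$ with $|C|:=\sum_iC_i\geq\eps n$ has $\sum_{i\in X}C_i<\tfrac{B-1}{B+1}\eps m$, and let $E$ be the event that some such $C$ additionally satisfies $\sum_{i\in Y}C_i\geq\eps m$. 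The goal is to show $\Pr[X\text{ bad}]\leq 2\Pr[E]$ and then bound $\Pr[E]$ by union-bounding over projections.

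For the symmetrization, fix a candidate $C$ with $p=|C|/n\geq\eps$; then $\sum_{i\in Y}C_i$ is a sum of $m$ independent Bernoullis with mean $pm\geq\eps m$. Theorem \ref{t611} says its median is $\lfloor pm\rfloor$ or $\lceil pm\rceil$; up to a cosmetic $O(1/m)$-tightening of $\eps$ to handle rounding, this gives $\Pr[\sum_{i\in Y}C_i\geq\eps m]\geq 1/2$, so that $\Pr[X\text{ bad}]\leq 2\Pr[E]$.

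Next, condition on the combined ordered sample $Z=(X,Y)\in[n]^{2m}$. Since $X$ and $Y$ are i.i.d.\ $m$-tuples, $(X,Y)$, given $Z$ up to permutation, is a uniformly random balanced split of the $2m$ positions of $Z$. By the Sauer--Perles--Shelah lemma the set $\{C|_Z:C\in\CC\}\subset\{0,1\}^{2m}$ has size at most $g(d,2m)\leq(2em/d)^d$. For each fixed projection, with $T=\sum_{i\in Z}C_i$, the variable $\sum_{i\in X}C_i$ is hypergeometric with mean $T/2$, and the event $E$ forces $\sum_{i\in X}C_i\leq a:=\min\{\tfrac{B-1}{B+1}\eps m,\,T-\eps m\}$ together with $T\geq\eps m$. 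A case split on whether $T\leq\tfrac{2B}{B+1}\eps m$ (in which case the relevant inequality is $\sum_{i\in Y}C_i\geq\eps m$, an upper-tail deviation of multiplicative size at least $1/B$ for a variable with mean $T/2\in[\eps m/2,\tfrac{B}{B+1}\eps m]$) or $T\geq\tfrac{2B}{B+1}\eps m$ (in which case $\sum_{i\in X}C_i\leq(1-1/B)\mu$ with $\mu=T/2\geq\tfrac{B}{B+1}\eps m$) puts the conditional probability, via standard Chernoff for the hypergeometric, at $\leq e^{-\Omega(\eps m/B^2)}$.

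Union-bounding over projections yields $\Pr[E]\leq(2em/d)^d\,e^{-\Omega(\eps m/B^2)}$; choosing the absolute constant $c$ large enough so that $m=c(B^2d/\eps)\log(B/\eps)$ makes the Sauer--Shelah exponent $d\log(2em/d)$ less than half the Chernoff exponent, delivering the desired $\Pr[X\text{ bad}]\leq e^{-\Omega(\eps m/B^2)}$. The main obstacle is the hypergeometric step: one must verify that at the ``tightest'' value $T\approx\tfrac{2B}{B+1}\eps m$, where the mean $T/2$ is closest to the threshold $a$, the remaining multiplicative gap is still of order $1/B$, and it is precisely this calculation that pins the relative error at $\tfrac{B-1}{B+1}$ rather than some weaker ratio.
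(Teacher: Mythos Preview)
Your argument is correct and follows the same double-sampling/symmetrization template as the paper, but with one genuine structural difference: you take the ghost sample $Y$ to have the \emph{same} size $m$ as $X$, whereas the paper takes an asymmetric ghost sample $T$ of size $Bm$. In the paper's version the event $E_2$ asks for $|C\cap T|\ge\lfloor B\eps m\rfloor$; conditioning on the combined sample of size $(B+1)m$, the hypergeometric mean of $|C\cap X|$ is then automatically at least about $\frac{B}{B+1}\eps m$, so a single lower-tail bound (Lemma~\ref{l635}) with deviation $\frac{\eps m}{B+1}$ suffices and no case analysis is needed. Your symmetric sample forces the mean $T/2$ to float, which is exactly why you must split on $T\lessgtr\frac{2B}{B+1}\eps m$ and invoke both an upper- and a lower-tail hypergeometric estimate; you have done this correctly, and the crossover point is indeed where the $\frac{B-1}{B+1}$ ratio becomes tight. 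The trade-off: the paper's asymmetric trick buys a cleaner one-line tail bound at the price of applying Sauer--Perles--Shelah on $(B+1)m$ rather than $2m$ coordinates, which only affects the implicit constant $c$; your route is the more ``textbook'' VC symmetrization and yields a marginally better Sauer factor, but requires the two-case hypergeometric computation.
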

In order to prove the above statement, we need some standard
estimates for large deviations of the hypergeometric distribution.
The estimate we use here was first proved by  Hoeffding \cite{Hoe},
see also \cite{JLR}, Theorem 2.10 and Theorem 2.1.
\begin{lemma}[Hoeffding \cite{Hoe}, see also \cite{JLR}]
\label{l635}
Let $H$ be the hypergeometric  distribution given by the cardinality
$|R \cap S|$ where $S$ is a random subset of cardinality $m$ in 
a set of size $N$ containing a subset $R$  of cardinality 
$pN$. Then the probability that $H$ is smaller than 
$pm-t$ is at most $e^{-t^2/2pm}$.
\end{lemma}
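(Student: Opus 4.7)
The plan is to reduce the hypergeometric lower tail to that of $\mathrm{Bin}(m,p)$ and then apply the classical one-sided Chernoff bound. Write $H = X_1 + \cdots + X_m$, where $(X_1,\ldots,X_m)$ is obtained by drawing $m$ distinct elements of $[N]$ one at a time, uniformly without replacement, and setting $X_i = \mathbf{1}[\text{the $i$th drawn element lies in $R$}]$. Let $H' = Y_1 + \cdots + Y_m$ with $Y_i$ i.i.d.\ Bernoulli$(p)$, so that $H' \sim \mathrm{Bin}(m,p)$ and $\mathbb{E}[H'] = \mathbb{E}[H] = pm$.

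The central ingredient is Hoeffding's convex-ordering lemma: for every convex function $\varphi:\mathbb{R}\to\mathbb{R}$,
$$
\mathbb{E}[\varphi(H)] \;\le\; \mathbb{E}[\varphi(H')].
$$
I would prove this by a coordinate-by-coordinate swap argument. Condition on $X_1,\ldots,X_{i-1}$; then the remaining draws are a without-replacement sample from a population whose fraction of elements in $R$ averages (unconditionally) to $p$, so $\mathbb{E}[X_i \mid X_1,\ldots,X_{i-1}] = p$. Replacing $X_i$ by an independent Bernoulli$(p)$ keeps this conditional mean fixed while making the distribution more spread out, so by Jensen applied to $\varphi$ the expectation $\mathbb{E}[\varphi(\cdots)]$ can only increase. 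Iterating $m$ times transforms $H$ into $H'$ and gives the claimed domination; this is precisely the content of Hoeffding's 1963 lemma.

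Applying the lemma with $\varphi(x) = e^{-\lambda x}$, $\lambda > 0$, the exponential Markov inequality yields
$$
\Pr[H \le pm - t] \;\le\; e^{\lambda(pm - t)} \mathbb{E}[e^{-\lambda H}] \;\le\; e^{\lambda(pm-t)} \bigl(1 - p + p e^{-\lambda}\bigr)^m.
$$
Optimizing over $\lambda \ge 0$ gives the classical lower-tail bound $\exp\bigl(-m\, D(p - t/m \,\|\, p)\bigr)$, where $D$ is the binary relative entropy. A two-term Taylor expansion of $D(p - u \,\|\, p)$ about $u = 0$, using $\partial_u^2 D(p-u\|p) = \tfrac{1}{p-u} + \tfrac{1}{1-p+u} \ge \tfrac{1}{p}$ for $u \in [0,p]$, yields $D(p - t/m \,\|\, p) \ge t^2/(2 p m^2)$, and hence $\Pr[H \le pm - t] \le e^{-t^2/(2pm)}$, as required.

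The main obstacle is the convex-ordering step, because the $X_i$'s are negatively correlated rather than independent, so one cannot simply factor the moment generating function and has to resort to the swap/Jensen argument (or, equivalently, to negative association). Once convex domination is in hand, the remaining ingredients --- the exponential change of measure and the quadratic lower bound on the binary KL divergence near $p$ --- are entirely routine.
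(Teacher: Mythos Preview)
The paper does not prove this lemma at all: it is stated with attribution to Hoeffding \cite{Hoe} and \cite{JLR} and then used as a black box in the proof of Claim~\ref{c637}. Your overall route---reduce the hypergeometric lower tail to the binomial one via Hoeffding's convex-ordering theorem, then run the exponential Chernoff method and lower-bound the binary relative entropy by $t^2/(2pm^2)$---is precisely the standard proof, and the final chain of inequalities is correct.

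There is, however, a real gap in your justification of the convex-ordering step. You claim that $\mathbb{E}[X_i \mid X_1,\ldots,X_{i-1}] = p$, but this is false: conditioned on the first $i-1$ draws the fraction of $R$-elements left in the urn is $(pN-\sum_{j<i}X_j)/(N-i+1)$, which depends on the history. Only the \emph{unconditional} expectation equals $p$. Moreover, even if the conditional mean were $p$, both $X_i$ and $Y_i$ are Bernoulli, so ``more spread out with the same mean'' would make them identically distributed---there is nothing for Jensen to act on. The actual obstruction is the dependence of $X_i$ on the \emph{later} coordinates $X_{i+1},\ldots,X_m$, and a correct one-coordinate swap requires combining convexity (monotone second differences of $\varphi$) with a correlation inequality, or else one follows Hoeffding's original argument (Theorem~4 of \cite{Hoe}), which groups the $N^m$ with-replacement samples by the set of distinct indices used and applies Jensen within each group. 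Any of these repairs the step; the one-line swap you wrote does not.
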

\vspace{0.1cm}

\noindent
{\bf Proof of Proposition \ref{p634}:}\, 
Let $m$ be as in the statement of the proposition and let
$X=(x_1, \ldots ,x_m)$ be a random multiset obtained
by $m$ independent random choices, with repetitions, of elements of $[n]$.
For  $C \in \CC$ we let $|C|$ denote $\sum_{i=1}^n C_i$ and let
$|C \cap X|$ denote $|\{i: C_{x_i}=1\}|.$
Let $E_1$ be the
following event:
$$
E_1=\{ \exists C \in \CC: |C| \geq \eps n, |C \cap X | < 
\frac{B-1}{B+1}\eps m \}
$$
To complete the proof we have to show that the probability of $E_1$ is
as small as stated in the proposition.
To do so, we make an additional random choice
and define another event as follows. Independently of the previous
choice, let $T=(y_1, \ldots ,y_{Bm})$ be obtained by $Bm$
independent
random choices of elements of $[n]$.
Let $E_2$ be the event defined by
$$
E_2=\left\{ \exists C \in \CC: |C| \geq \eps n,
|C \cap X| < \frac{B-1}{B+1}\eps m,
|C \cap T| \geq \lfloor B \eps m \rfloor \right\}
$$

\begin{claim} 
\label{c636}
$Pr(E_2) \geq \frac{1}{2} Pr{E_1}$.
\end{claim}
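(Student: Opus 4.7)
The plan is to carry out a standard ``double sampling'' (symmetrization) argument of the Vapnik--Chervonenkis type, with the small twist that the usual Chernoff-based lower bound on the conditional probability of $E_2$ is replaced by the sharp median estimate given by the Jogdeo--Samuels theorem (Theorem \ref{t611}). This is precisely why Theorem \ref{t611} was singled out at the beginning of the section.

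First I would condition on the multiset $X$ and on the existence of a witness for $E_1$. That is, suppose $E_1$ occurs, and select (by any deterministic rule depending only on $X$) a specific $C = C(X) \in \CC$ satisfying $|C| \geq \eps n$ and $|C \cap X| < \frac{B-1}{B+1}\eps m$. Then
\[
\Pr(E_2) \;\geq\; \Pr\!\big(E_1 \text{ and } |C(X)\cap T| \geq \lfloor B\eps m\rfloor\big)
\;=\; \mathbb{E}\!\left[\mathbf{1}_{E_1}\cdot \Pr\!\big(|C(X)\cap T|\geq \lfloor B\eps m\rfloor \mid X\big)\right],
\]
since $T$ is drawn independently of $X$ and the definition of $E_2$ only requires the existence of \emph{some} $C$ with the three listed properties.

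Next I would bound the inner probability from below by $1/2$. Given any fixed $C \in \CC$ with $|C| \geq \eps n$, the random variable $|C \cap T|$ is a sum of $Bm$ independent $\{0,1\}$ indicator variables, each equal to $1$ with probability $|C|/n \geq \eps$. Hence its expectation is at least $B\eps m$. By Theorem \ref{t611} the median of $|C \cap T|$ is either $\lfloor \mathbb{E}|C\cap T|\rfloor$ or $\lceil \mathbb{E}|C\cap T|\rceil$, and in either case is at least $\lfloor B\eps m\rfloor$. Therefore
\[
\Pr\!\big(|C\cap T| \geq \lfloor B\eps m\rfloor\big) \;\geq\; \tfrac{1}{2}.
\]
Plugging this into the previous display gives $\Pr(E_2) \geq \tfrac{1}{2}\Pr(E_1)$, which is the claim.

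There is no real obstacle: the only subtlety is that the naive Chernoff bound would give a lower bound of the form $1 - e^{-\Omega(B\eps m)}$ rather than the clean constant $1/2$, and in the regime where $B\eps m$ is merely a moderate constant this would be insufficient. Using the Jogdeo--Samuels median estimate bypasses this issue entirely and yields $1/2$ with no lower bound required on $B\eps m$, which is exactly what the later union-bound-over-projections argument in Proposition \ref{p634} needs.
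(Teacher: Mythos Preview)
Your proof is correct and follows essentially the same approach as the paper: condition on $X$, pick a witnessing $C$ for $E_1$, and use the Jogdeo--Samuels median estimate (Theorem~\ref{t611}) to show that $|C\cap T|$ meets the threshold $\lfloor B\eps m\rfloor$ with probability at least $1/2$. Your write-up is in fact a bit more careful than the paper's in making the conditioning explicit and in justifying why the median bound gives the needed tail inequality.
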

\begin{proof}
It suffices to prove that the conditional probability $Pr(E_2 |
E_1)$ is at least $1/2$. Suppose that the event $E_1$ occurs. Then
there is a $C \in \CC$ such that $|C| \geq \eps n $ and $|C
\cap X| < \frac{B-1}{B+1} \eps m$. 
The conditional probability above is clearly
at
least the probability that for this specific $C$, $|C \cap T| \geq
\lfloor B \eps m \rfloor $. However $|C \cap T|$ is a binomial random
variable with
expectation at least $ B \eps m$,  and therefore, by Theorem
\ref{t611} its median is at least the floor of that, implying the
desired result. 
\end{proof}
\begin{claim} 
\label{c637}
$$
Pr(E_2) \leq g(d,(B+1)m) 2^{-\epsilon m/8(B+1)^2 }
$$
\end{claim}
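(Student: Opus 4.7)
The plan is the standard double-sampling (symmetrization, or ``ghost sample'') argument used to prove $\eps$-net and $\eps$-approximation theorems in VC theory. Couple $X$ and $T$ by taking $Z=(z_1,\ldots,z_{(B+1)m})$ to be an ordered sequence of $(B+1)m$ i.i.d.\ uniform samples from $[n]$, with $X$ the first $m$ entries and $T$ the remaining $Bm$. Conditioning on the multiset $Z$, the partition of its $(B+1)m$ positions into an $m$-part (for $X$) and a $Bm$-part (for $T$) is uniformly random: $X$ becomes a uniformly random $m$-sub-multiset of $Z$. After this conditioning the problem is purely combinatorial, involving only a single uniform $m$-subset choice.

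Whether a given $C\in\CC$ witnesses $E_2$ depends only on the restriction of $C$ to the positions of $Z$, so one can union-bound over distinct restrictions. By the Sauer--Perles--Shelah lemma (using $VC(\CC)\leq d$) there are at most $g(d,(B+1)m)$ of them. Fix one and let $k=|C\cap Z|$. Together with $|C\cap X|+|C\cap T|=k$, the requirement $|C\cap T|\geq \lfloor B\eps m\rfloor$ forces $k\geq \lfloor B\eps m\rfloor$. Conditionally on the restriction, $|C\cap X|$ is hypergeometric with population $N=(B+1)m$, success count $k$, sample size $m$, and mean $pm=k/(B+1)$.

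Now apply Lemma \ref{l635} with $t=pm-\frac{B-1}{B+1}\eps m=\frac{k-(B-1)\eps m}{B+1}$. A short monotonicity check shows that the exponent $(k-(B-1)\eps m)^2/(2k(B+1))$ in the resulting bound is increasing in $k$ on $[B\eps m,(B+1)m]$ and hence, up to the floor, is minimized at $k=B\eps m$. Substituting there gives a tail bound of order $\exp\!\bigl(-\eps m/(2B(B+1))\bigr)$, which for $B\geq 2$ is comfortably below $2^{-\eps m/(8(B+1)^2)}$ (since $\ln 2<1$ makes the base-$e$ bound stronger than the required base-$2$ one). A union bound over the at most $g(d,(B+1)m)$ restrictions then yields the claim.

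The only substantive step is the symmetrization; once conditioning on $Z$ replaces the two independent multinomial samples by a single uniform $m$-subset choice, the rest is a routine combination of Sauer--Shelah with the hypergeometric Chernoff--Hoeffding bound from Lemma \ref{l635}. I expect no real obstacle --- the only mild subtleties are tracking the rounding in $\lfloor B\eps m\rfloor$ (absorbed by the generous slack in the exponent) and noting that the restriction count depends only on the distinct elements of $Z$, which is always at most $(B+1)m$.
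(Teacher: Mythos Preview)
Your proposal is correct and follows essentially the same route as the paper: both couple $X$ and $T$ via a combined sample $Z$ of size $(B+1)m$, condition on $Z$ so that $X$ becomes a uniformly random $m$-subset of positions, invoke Sauer--Perles--Shelah to bound the number of distinct restrictions by $g(d,(B+1)m)$, and finish with the hypergeometric tail bound of Lemma~\ref{l635}. The paper handles the floor by using $\lfloor B\eps m\rfloor>(B-\tfrac12)\eps m$ (valid since $m$ is chosen large) rather than your monotonicity-in-$k$ argument, but this is a cosmetic difference and both computations land well inside the stated bound.
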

\begin{proof}
The random choice of $X$ and $T$ can be described in the following
way, which is equivalent to the previous one. First choose $X
\cup T = ( z_1, \ldots ,z_{(B+1)m})$ by making $(B+1)m$ random independent
choices of elements of $[n]$ (with repetitions), 
and then choose randomly precisely
$m$ of the elements $z_i$ to be the set $X$, where the remaining elements
$z_j$ form the set $T$. For each member $C \in \CC$
satisfying $|C | \geq \eps n $, let $E_C$ be the event that
$$
| C \cap T | \geq \lfloor B \eps m  \rfloor~~ \mbox{and}~~ |C \cap X| < 
\frac{B-1}{B+1}\eps m.
$$
A crucial fact is that if $C,C' \in \CC$ are two ranges, $|C | 
\geq \eps n$
and $|C'| \geq \eps n$ and if $C \cap (X \cup T)=
C' \cap (X \cup T )$, then the two events $E_C$ and $E_{C'}$,
when both are conditioned on the choice of $X \cup T$, are
identical. This is because the occurrence of $E_C$ depends only on
the intersection $C \cap (X \cup T) $. Therefore, for any fixed
choice of $X \cup T$, the number of distinct events $E_C$ does not
exceed the number of different sets in the projection 
of $\CC$  on the coordinates ${X \cup T}$.
Since the VC-dimension is at most $d$,
this number does not exceed $g(d,(B+1)m)$, by the
Sauer-Perles-Shelah Lemma.

Let us now estimate the probability of a fixed event of the form
$E_C$, given the choice of $X \cup T$.
This probability is at most the probability that a hypergeometric
random variable counting the size of the intersection of
a random set of $m$ elements with a subset $R$ of size at least
$\lfloor B \eps m \rfloor$ in a set of size $N=(B+1)m$ is
smaller than $\frac{B-1}{B+1} \epsilon m$. By Lemma \ref{l635}, 
and using the fact that the choice of $m$ implies that
$\lfloor B \eps m \rfloor> (B-1/2) \eps m$ this probability is
smaller than $e^{-\eps m/8(B+1)^2}$. 
\end{proof}

By Claims~\ref{c636} and~\ref{c637}, $Pr(E_1) \leq 2g(d,(B+1)m)
2^{-\eps m/8(B+1)^2}$. The assertion of the theorem follows
using the fact that 
$$
g(d,(B+1)m)< (\frac{2e(B+1)m}{d})^d.
$$
\hfill  $\Box$
\vspace{0.1cm}

\noindent
{\bf Proof of Theorem \ref{t632}:}\, 
Let $V$ be a linear code of length $n$, dimension $k$ and minimum
relative distance $\delta$. Let $\CC$ be the set of all indicator
vectors of supports of vectors in $V$. By Claim \ref{cl633} the
VC-dimension of $\CC$ is at most $k$, and by definition the Hamming
weight of each member $C$ of $\CC$ is at least $\delta n$. The
desired result thus follows from Proposition \ref{p634}.
\hfill $\Box$

\section{Connected dominating sets}
The first result in this section was obtained in joint discussions 
with Michael Krivelevich \cite{Kr}.

Let $G=(V,E)$ be a connected graph. Let $\gamma(G)$ denote the
minimum  size of a dominating set in it, that is, the minimum
cardinality of a set of vertices $X \subset V$ so that each $v \in
V-X$ has at least one neighbor in $X$. Let
$\gamma_c(G)$
denote the minimum size of a connected dominating set of  $G$, that
is, the minimum cardinality of a dominating set of vertices $X$ so
that the induced subgraph of $G$ on $X$ is connected. 
One of the reasons this parameter has been studied extensively
is the fact that $|V|-\gamma_c(G)$ is exactly the maximum possible
number of leaves in a spanning tree of $G$.
It is well known that if the minimum degree in $G$ is $k$ and its
number of vertices is $n$, then
$\gamma(G) \leq \frac{n (\ln (k+1)+1)}{k+1}$. See \cite{Lo} or
\cite{AS}, Theorem 1.2.2 for a proof. As mentioned in \cite{AS}
this is asymptotically tight for large $k$, see, e.g., \cite{AW}
for a proof that for any $\eps>0$  and $k>k_0(\eps)$ a random
$k$-regular graph on $n$ vertices is unlikely to contain a
dominating set of size at most $(1-\eps) \frac{n \ln k}{k}$.

Caro, West and Yuster \cite{CWY} proved that for every 
connected graph $G$ with $n$ vertices and 
minimum degree $k$, $\gamma_c(G)$ is also 
not much larger than $\frac{n \ln (k+1)}{k+1}$. The precise
statement of their result is as follows.
\begin{theo}[\cite{CWY}]
\label{t711}
Let $G$ be a connected graph with $n$ vertices and minimum degree
at least $k$. Then
$$
\gamma_c(G) \leq \frac{n(\ln(k+1)+0.5\sqrt{ \ln (k+1)} +145)}{k+1}
$$
\end{theo}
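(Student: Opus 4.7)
The plan is to combine the classical random-subset argument for $\gamma(G)$ with a connectivity-augmentation step whose cost is of lower order than the dominating-set bound itself.

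\emph{Step 1 (random pre-dominating set).} Set $p = (\ln(k+1) + \alpha\sqrt{\ln(k+1)})/(k+1)$, where $\alpha$ is a small constant to be optimised. Include each vertex of $V$ in a set $A$ independently with probability $p$, and let $B$ be the set of vertices outside $A$ having no neighbour in $A$. Then $D := A \cup B$ is a dominating set, and by a standard calculation
\[
\mathbb{E}|D| \;\le\; np + n(1-p)^{k+1} \;\le\; \frac{n\left(\ln(k+1) + \alpha\sqrt{\ln(k+1)} + e^{-\alpha\sqrt{\ln(k+1)}}\right)}{k+1}.
\]

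\emph{Step 2 (connecting).} The induced subgraph $G[D]$ need not be connected; let $t$ denote its number of components. Since $G$ is connected and $D$ dominates, a shortest path in $G$ between any two components of $G[D]$ uses at most two interior vertices of $V \setminus D$. Build a spanning tree in the auxiliary graph whose vertices are the components of $G[D]$ and whose edges correspond to such short paths; realising its $t-1$ edges costs at most $2(t-1)$ connector vertices. Thus $\gamma_c(G) \le |D| + 2(t-1)$.

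\emph{Step 3 (bounding $t$, the main obstacle).} A component of $G[D]$ either meets $B$ — there are at most $|B|$ such components, and $\mathbb{E}|B| \le n e^{-\alpha\sqrt{\ln(k+1)}}/(k+1)$ is already of the target order — or lies wholly inside $A$. For a component of $G[A]$ of size $s$ rooted at a fixed vertex $v$, the event requires $s$ particular vertices to lie in $A$ and all of its boundary neighbours in $G$ (of cardinality at least $(k+1) - (s-1)$ at each vertex, so at least $\Omega(ks)$ in total after accounting for overlaps) to avoid $A$. Summing over choices of rooted connected subtree of size $s$ in the infinite $k$-branching tree — bounded via Cayley/branching estimates by $(ek)^{s-1}$ — yields an expected count of order $np\sum_{s\ge 1}(kp)^{s-1}(1-p)^{(k-s)s}$, which for the chosen $p$ is dominated by $np(1-p)^{k-O(1)}$, i.e.\ of order $n\sqrt{\ln(k+1)}/(k+1)$.

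\emph{Step 4 (optimisation).} Adding the contributions from Steps 1 and 3 and choosing $\alpha$ so that the sum $\alpha\sqrt{\ln(k+1)} + e^{-\alpha\sqrt{\ln(k+1)}} + 2t\cdot(k+1)/n$ is at most $\tfrac12\sqrt{\ln(k+1)} + O(1)$ gives the claimed inequality. The large additive constant $145$ absorbs the $+1$ from the standard domination bound, the lower-order error terms from the component count, and the regime of small $k$ where the asymptotic estimates are loose; it is essentially a slack term ensuring the inequality holds for all $k\ge 1$.

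The chief difficulty is Step 3: one must bound the component count of the random subgraph $G[D]$ using only the minimum-degree hypothesis (no global structure of $G$ is available). The decisive point is that raising $p$ by a factor of $1 + O(1/\sqrt{\ln(k+1)})$ suppresses \emph{both} the expected $|B|$ and the expected number of interior components of $G[A]$ down to order $n\sqrt{\ln(k+1)}/(k+1)$, and that the factor $2$ from the connector cost is then the only remaining multiplicative loss — yielding the $\tfrac12\sqrt{\ln(k+1)}$ second-order term in the statement.
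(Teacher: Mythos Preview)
First, a framing note: the paper does not prove Theorem~\ref{t711} at all --- it is quoted from~\cite{CWY} as the prior result to be improved, and the paper instead establishes the sharper Theorems~\ref{t712}--\ref{t714}. So there is no proof in the paper to compare your attempt against directly.

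On the merits, your Step~3 has a real gap, and it is the step you yourself flag as the chief difficulty. The enumeration of connected $s$-vertex pieces of $G[A]$ by comparison with ``the infinite $k$-branching tree'', yielding at most $(ek)^{s-1}$ rooted choices, is valid only under a bound on the \emph{maximum} degree of $G$; here only the minimum degree is controlled, and the maximum degree may be as large as $n-1$. Likewise, the claim that the external boundary of an $s$-vertex connected set has size $\Omega(ks)$ ``after accounting for overlaps'' is false under the min-degree hypothesis alone: the $s$ vertices may share almost all of their external neighbours (for instance, $s$ vertices inside a $(k{+}1)$-clique have boundary of size only $k+1-s$). With neither the enumeration bound nor the boundary bound available, the first-moment sum you write down is not controlled, and nothing forces the expected component count to be of order $n\sqrt{\ln(k+1)}/(k+1)$.

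The paper's device, which you may find instructive, sidesteps component enumeration entirely. Rather than counting components of $H=G[S]$ combinatorially, it bounds their number by $D(H)=\sum_{v\in S}\frac{1}{d_H(v)+1}$ (Lemma~\ref{l717}) and then computes $E[D(H)]$ term by term using the closed form $E\bigl[\tfrac{1}{B(d,p)+1}\bigr]=\tfrac{1-(1-p)^{d+1}}{(d+1)p}$ (Lemma~\ref{l716}). This is a purely local calculation that needs no control on the maximum degree, and combined with a merging lemma (Lemma~\ref{l715}) that is a refinement of your Step~2 it yields $\gamma_c(G)\le \frac{n}{k+1}(\ln(k+1)+4)-2$, eliminating the $\sqrt{\ln(k+1)}$ term altogether.
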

Here we first prove a similar result with a slightly better estimate. 
\begin{theo}
\label{t712}
Let $G$ be a connected graph with $n$ vertices and minimum degree
at least $k$. Then
$$
\gamma_c(G) \leq \frac{n(\ln(k+1)+\ln \lceil \ln (k+1) \rceil+4)}{k+1}.
$$
\end{theo}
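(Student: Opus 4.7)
The plan is a two-stage probabilistic construction: first build a small dominating set by the standard random-subset method, then attach connectors. The sampling probability is tuned slightly higher than in the proof of $\gamma(G)\le n(\ln(k+1)+1)/(k+1)$, and the extra factor $\ln\lceil\ln(k+1)\rceil$ provides enough concentration to guarantee that the resulting dominating set already has only $O(n/(k+1))$ components, whose reconnection fits inside the additive slack $4n/(k+1)$.

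\emph{Step 1 (random dominating set).} Set $p=(\ln(k+1)+\ln\lceil\ln(k+1)\rceil)/(k+1)$ and include each vertex in $R\subseteq V$ independently with probability $p$. Let $U=\{v\in V\setminus R:N(v)\cap R=\emptyset\}$ and put $D_0=R\cup U$. Then $D_0$ dominates $G$, and since every vertex has degree at least $k$,
\begin{equation*}
E[|D_0|]\;\le\; np+n(1-p)^{k+1}\;\le\; np+ne^{-p(k+1)}\;=\;\frac{n(\ln(k+1)+\ln\lceil\ln(k+1)\rceil)}{k+1}+\frac{n}{(k+1)\lceil\ln(k+1)\rceil}.
\end{equation*}

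\emph{Step 2 (connecting).} Let $C_1,\ldots,C_t$ be the components of $G[D_0]$. Since $D_0$ dominates and $G$ is connected, any two of these components lie at distance at most $3$ in $G$, so a spanning tree of the auxiliary graph on $\{C_1,\ldots,C_t\}$ produces a set of at most $2(t-1)$ connector vertices of $V\setminus D_0$ whose addition makes $D_0$ connected, giving a connected dominating set of size $|D_0|+2(t-1)$.

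\emph{Main obstacle.} The critical step is bounding $t$; the naive estimate $t\le|D_0|$ only yields a Duchet--Meyniel-type bound $\gamma_c(G)\le 3|D_0|$, which is too weak by a multiplicative factor. The plan is to exploit the boosted choice of $p$: by a Chernoff estimate, for every vertex $v$ the size $|N(v)\cap R|$ is concentrated around its mean $pk\ge\ln(k+1)+\ln\ln(k+1)-o(1)$, so all but $O(n/(k+1)^{1+c})$ vertices of $R$ have at least $\tfrac12\ln(k+1)$ neighbors in $R$. Consequently every component of $G[R]$ either contains an "under-connected" vertex or has size at least $\tfrac12\ln(k+1)$, and hence the number of components of $G[R]$ is at most $|R|/(\tfrac12\ln(k+1))+O(n/(k+1)^{1+c})=O(n/(k+1))$; adding at most $|U|=O(n/((k+1)\ln(k+1)))$ further singletons gives $t=O(n/(k+1))$. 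Summing $E[|D_0|]$ with the $2(t-1)$ connector vertices and applying Markov's inequality to extract a realization then yields a connected dominating set of size at most $n(\ln(k+1)+\ln\lceil\ln(k+1)\rceil+4)/(k+1)$, as desired. The principal technical work is verifying that the constants in the Chernoff-packing step are tight enough so that the connector term is actually absorbed in the $+4$ additive slack rather than producing a larger constant.
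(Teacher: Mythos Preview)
Your plan has a real gap at exactly the point you flag. With threshold $\tfrac12\ln(k+1)$, the dominant term in your component bound is $E[|R|]/(\tfrac12\ln(k+1))=\dfrac{2np}{\ln(k+1)}=\dfrac{2n}{k+1}\Bigl(1+\dfrac{\ln\lceil\ln(k+1)\rceil}{\ln(k+1)}\Bigr)$, so $2E[t]$ is already $\tfrac{4n}{k+1}$ \emph{plus} a positive lower-order term of order $\tfrac{n\ln\ln k}{(k+1)\ln k}$. Since $E[|D_0|]$ alone consumes the full $n(\ln(k+1)+\ln\lceil\ln(k+1)\rceil)/(k+1)$ budget (and a bit more from $|U|$), the connector cost overshoots the $+4$ slack for every large $k$; the $-2$ cannot absorb a term that grows with $n$. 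Pushing the threshold toward $\ln(k+1)$ shrinks the $|R|/\beta$ term, but then the deviation $\mu-\beta$ from the mean $\mu\approx\ln(k+1)+\ln\ln(k+1)$ is only $O(\ln\ln(k+1))$, and the Chernoff tail $e^{-(\mu-\beta)^2/(2\mu)}$ no longer keeps the under-connected count below $n/(k+1)$. An interpolated threshold such as $\beta=\ln(k+1)-2\sqrt{\ln(k+1)\ln\ln(k+1)}$ can be made to work for large $k$, but that is a different and more delicate computation than the one you outline, and it does not obviously handle all $k$ uniformly.

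The paper avoids this balancing act with a deterministic merging lemma that replaces your naive $2(t-1)$ connector cost. Starting from \emph{any} dominating set $S$ with $x$ components, pick one vertex $v(C)$ per component; the closed neighbourhoods $N[v(C)]$ each have size at least $k+1$, so by pigeonhole some single vertex $u$ lies in at least $\lceil x(k+1)/n\rceil$ of them, and adding $u$ merges that many components simultaneously. Iterating while $x>n/(k+1)$ and then finishing with two-vertex merges costs at most $\tfrac{n}{k+1}(\ln\lceil\ln(k+1)\rceil+3)$ extra vertices whenever $x\le\gamma(G)\le\tfrac{n}{k+1}(\ln(k+1)+1)$, which gives the theorem directly. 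This pigeonhole merge is the missing idea: it removes any need for probabilistic control of $t$, works from an arbitrary dominating set (yielding as a byproduct the bound $\gamma_c(G)\le\gamma(G)+\tfrac{n}{k+1}(\ln\lceil\ln(k+1)\rceil+3)$), and produces the constant $+4$ cleanly for every $k$.
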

The main merit here is not the improved estimate, but
the proof, which is much simpler than the one in \cite{CWY}. Like
the proof in \cite{CWY}, it 
provides a simple efficient algorithm for finding a connected
dominating set of the required size for a given input graph.
As a byproduct of the proof we get an upper bound for
the difference between $\gamma(G)$ and $\gamma_c(G)$, as stated in
the following theorem.

Define a function $f=f_{n,k}$ mapping $[1,\infty)$ to
$[0,\infty)$ 
as follows.
For any real $x \geq 1$, let
$x=(y+z) \frac{n}{k+1}$ with $y \geq 0$ an integer 
and $z \in [0,1]$ a real:
\begin{enumerate}
\item
If $y=0$ then $ f(x)=\frac{n}{k+1}2z-2.$
\item
If $ y =1$ then
$ f(x)= \frac{n}{k+1}(\frac{z}{y}+ 2)-2.$
\item
If $y \geq 2$ then $ f(x)= \frac{n}{k+1}
( \frac{z}{y}+\frac{1}{y-1}+ \cdots + \frac{1}{1}+2)-2. $
\end{enumerate}
The function $f$ is piecewise linear and monotone increasing. Its
derivative, which exists in all points of $(1,\infty)$ besides the
integral multiples of $\frac{n}{k+1}$, is (weakly) decreasing, 
thus $f$ is concave. In addition it satisfies the following.
For every $x = (w+z) \frac{n}{k+1} > \frac{n}{k+1}$ with
$w \geq 1$ an integer and $z \in [0,1]$ a real, and for every $w'$
satisfying $w \leq w' \leq x-1$
\begin{equation}
\label{e7new}
f(x) \geq f(x-w')+1
\end{equation}
Indeed, the derivative of $f(z)$  is at least $\frac{1}{w}$ for every
$z$ in $(x-w',x]$ (besides the integral multiples of
$\frac{n}{k+1}$), and thus $f(x)-f(x-w')$, which is the integral of
this derivative from $x-w'$ to $x$, is at least $w' \cdot
\frac{1}{w} \geq 1$.
\begin{theo}
\label{t713}
Let $G$ be a connected graph with $n$ vertices, minimum degree 
at least $k$ and domination number  $\gamma=\gamma(G)$. 
Then $ \gamma_c(G) \leq \gamma + f_{n,k}(\gamma). $
Therefore 
$$ 
\gamma_c(G) < \gamma+ \frac{n}{k+1} (\ln \lceil \ln (k+1) \rceil +3).
$$ 
\end{theo}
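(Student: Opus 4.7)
The plan is to prove Theorem~\ref{t713} by a greedy induction on $\gamma$, driven by the characteristic property~\eqref{e7new} of the function $f_{n,k}$, which is tailored so that adding one connecting vertex exactly corresponds to a unit decrease of~$f$.

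First I would fix a minimum dominating set $D$ of $G$, with $|D|=\gamma$, and let $C_1,\ldots,C_c$ be the components of $G[D]$. I would initialize a connected set $S=C_1$ and iteratively add vertices of $V\setminus D$ in order to absorb the remaining components into $S$; the key bookkeeping observation is that once any vertex of a component $C_i$ becomes adjacent to $S$ (through a newly added non-$D$ vertex $v$), the entire component $C_i$ can be incorporated into $S$ at no additional non-$D$ cost. Writing $D'=D\setminus S$ and $x=|D'|$, the plan is to reduce $x$ by at least $w=\lfloor x(k+1)/n\rfloor$ per added non-$D$ vertex, whenever $x>n/(k+1)$.

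The main technical lemma would be the following absorption step: if $x>n/(k+1)$, there exists a vertex $v\in V\setminus D$ adjacent to $S$ with at least $w$ neighbors in $D'$. This is proved by a double-counting argument: since every $d\in D'$ has degree at least $k$, the number of edges from $D'$ to $V\setminus D'$ is at least $kx-2|E(G[D'])|$, and averaging over $V\setminus D'$ produces a vertex $v$ with many $D'$-neighbors; connectedness of $G$ combined with the fact that every vertex outside $D$ is adjacent to $D$ then lets one take $v$ adjacent to $S$ (up to possibly prepending a short path whose cost is folded into the constant correction terms $\pm 2$ of $f$). Given the lemma, adding $v$ absorbs the components of $G[D]$ meeting $N(v)$ into $S$, reducing $x$ by some $w'\geq w$; by property~\eqref{e7new}, $f(x)\geq f(x-w')+1$, so the inductive hypothesis yields the desired bound. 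In the base regime $x\leq n/(k+1)$ one falls back on the direct bound $\gamma_c\leq\gamma+2(c-1)\leq 3\gamma-2=\gamma+f(x)$, relying on the elementary fact that any two components of $G[D]$ are at distance at most $3$ in $G$ (again because every vertex outside $D$ is adjacent to $D$), so each pair can be joined using at most two extra non-$D$ vertices.

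The hard part will be ensuring that the connector $v$ provided by the averaging lemma can always be chosen \emph{adjacent to the current $S$}, rather than lying somewhere arbitrary in $V$; a naive pigeonhole only delivers the latter. My plan to bridge this gap is to allow a bounded number of ``scouting'' non-$D$ vertices along a shortest $S$-to-$v$ path, absorbing that additive cost into the $-2$ slack already built into $f$, or alternatively to reformulate the induction so the seed component $C_1$ may be chosen adversarially near the optimal $v$. Once $\gamma_c(G)\leq\gamma+f_{n,k}(\gamma)$ is established, the second inequality $\gamma_c(G)<\gamma+\frac{n}{k+1}(\ln\lceil\ln(k+1)\rceil+3)$ follows by plugging in the known upper bound $\gamma\leq \frac{n(\ln(k+1)+1)}{k+1}$ and using the harmonic-sum estimate $H_{y-1}\leq\ln y+1$ with $y=\lceil\ln(k+1)\rceil$, which bounds the value of $f_{n,k}$ at such $\gamma$.
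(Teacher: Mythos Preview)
Your proposal has the right skeleton and correctly identifies its own weak point, but that weak point is a genuine gap that your suggested fixes do not close. Requiring the connector $v$ to be adjacent to the already-built connected set $S$ is too strong: pigeonhole only gives a vertex with many $D'$-neighbors somewhere in $V$, and a shortest $S$--$v$ path can have length growing with $n$, so neither the one-time $-2$ slack in $f$ nor a clever choice of the seed component $C_1$ (which could help only in the first iteration) absorbs that cost.

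The paper sidesteps this entirely by \emph{not} growing a single connected set during the induction. It proves a lemma (Lemma~\ref{l715}) that takes an arbitrary dominating set $S$ with $x$ connected components and shows $\gamma_c(G)\le |S|+f_{n,k}(x)$, by induction on $x$. In the main step, when $x>n/(k+1)$, one picks \emph{one representative vertex} $v(C)$ in each of the $x$ components; each closed neighborhood $N[v(C)]$ has size at least $k+1$, so by pigeonhole some vertex $u\in V$ lies in at least $\lceil x(k+1)/n\rceil$ of them. Adding this single $u$ to $S$ merges that many components into one, so $S\cup\{u\}$ is a dominating set with at most $x-w'$ components for some $w'\ge\lfloor x(k+1)/n\rfloor$, and \eqref{e7new} finishes the induction. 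There is no adjacency-to-$S$ constraint because the induction only tracks the \emph{number} of components of the current dominating set; full connectedness is obtained only at the end, in the base regime $x\le n/(k+1)$, via the same two-vertices-per-merge argument you describe. Theorem~\ref{t713} then follows by taking $|S|=\gamma$ and $x\le\gamma$ and using monotonicity of $f$; your derivation of the second inequality is essentially the paper's.
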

We also describe an improved argument that provides a better estimate 
than the ones in Theorems \ref{t711}, \ref{t712}.
\begin{theo}
\label{t714}
Let $G$ be a connected graph with $n$ vertices and minimum degree
at least $k$. Then 
$$
\gamma_c(G) \leq  \frac{n}{k+1}(\ln (k+1)+4)-2.
$$
\end{theo}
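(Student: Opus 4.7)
The goal is to refine the probabilistic argument underlying Theorem~\ref{t712} so that the harmonic overhead coming from $f_{n,k}$ is replaced by a purely linear one, yielding a bound of the form $\frac{n(\ln(k+1)+O(1))}{k+1}$.

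Fix $p=\frac{\ln(k+1)+1}{k+1}$ and form $R\subseteq V$ by including each vertex independently with probability $p$. Let $U=V\setminus N[R]$ and $D=R\cup U$, a dominating set of $G$. By linearity of expectation and the inequality $(1-p)^{k+1}\leq e^{-p(k+1)}=1/(e(k+1))$, we have
\[
\mathbb{E}[|D|]\ \leq\ pn+n(1-p)^{k+1}\ \leq\ \frac{n(\ln(k+1)+1+1/e)}{k+1}.
\]

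The first ingredient is a connection lemma: for any dominating set $D$ of a connected graph $G$, the auxiliary graph $H$ whose vertices are the components of $G[D]$ and whose edges are pairs at $G$-distance at most $3$ is connected. Indeed, if not, the components would split into non-empty families $\mathcal{A},\mathcal{B}$ with $d_G(V_{\mathcal{A}},V_{\mathcal{B}})=d\geq 4$; taking a shortest $V_{\mathcal{A}}$--$V_{\mathcal{B}}$ path $v_0,v_1,\ldots,v_d$, minimality forces every interior $v_i$ to lie outside $D$, and the domination property gives each such $v_i$ a $D$-neighbor $w_i$ in some component. For $2\leq i\leq d-2$ one checks that $w_i\in V_{\mathcal{A}}$ would give a path of length $d-i+1<d$ from $V_{\mathcal{A}}$ to $V_{\mathcal{B}}$, and symmetrically $w_i\in V_{\mathcal{B}}$ is impossible, contradicting the partition. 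Taking a spanning tree of $H$ and inserting at most two vertices of $V\setminus D$ along each bridging path, one obtains a connected dominating set of size at most $|D|+2(c-1)$, where $c$ is the number of components of $G[D]$.

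The crux is the bound $\mathbb{E}[c]=O(n/(k+1))$. Let $R'\subseteq R$ be the set of vertices with at least $pk/2$ neighbors inside $R$; a Chernoff estimate yields $\mathbb{E}[|R\setminus R'|]\leq n e^{-\Omega(pk)}=O(n/(k+1))$. The induced subgraph $G[R']$ has minimum degree $\Omega(\ln(k+1))$, so each of its components contains $\Omega(\ln(k+1))$ vertices and $c(G[R'])=O(|R'|/\ln(k+1))=O(n/(k+1))$; re-attaching the vertices of $R\setminus R'$ and of $U$ adds at most $O(n/(k+1))$ further components, so $\mathbb{E}[c]=O(n/(k+1))$. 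Combining these estimates and tracking the absolute constants carefully (balancing the contributions of $|R|$, $|U|$, $|R\setminus R'|$ and the component count) gives, with positive probability over the choice of $R$, $|D|+2(c-1)\leq \frac{n(\ln(k+1)+4)}{k+1}-2$, proving the theorem. The main obstacle is precisely this component-count estimate: the naive bound $c\leq |D|$ loses a factor of $\ln(k+1)$, and the improvement relies on the observation that in the chosen regime a typical vertex of $R$ has $\Omega(\ln(k+1))$ neighbors in $R$, forcing the components of $G[R']$ to be large and hence few.
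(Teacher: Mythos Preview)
Your overall strategy---bound $\gamma_c(G)\le |D|+2(c-1)$ and then show $\mathbb{E}[c]=O(n/(k+1))$---is sound and is indeed the heart of the matter. But the argument you give for the component count has a real gap. You define $R'$ as the set of vertices of $R$ having at least $pk/2$ neighbours \emph{in $R$}, and then assert that $G[R']$ has minimum degree $\Omega(\ln(k+1))$. That does not follow: a vertex $v\in R'$ is guaranteed many neighbours in $R$, but those neighbours may all lie in $R\setminus R'$, leaving $v$ isolated in $G[R']$. Controlling this would require second--moment information about the neighbourhood structure (how many neighbours of a neighbour lie in $R$), and the events involved are not independent of the event $v\in R'$. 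Without a bound on the minimum degree of $G[R']$, the inequality $c(G[R'])\le |R'|/\Omega(\ln(k+1))$ is unjustified, and the whole component estimate collapses. A secondary issue is that even granting your estimates, the constants you sketch add up to something noticeably larger than $4$; ``tracking the absolute constants carefully'' is exactly where the work lies, and it is not done here.

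The paper sidesteps the minimum--degree difficulty entirely. Instead of passing to a high--degree core $R'$, it bounds the number of components of $H=G[S]$ by the quantity $D(H)=\sum_{v\in S}\frac{1}{d_H(v)+1}$ (each component of $H$ on $m$ vertices contributes at least $\sum_{v}\frac{1}{m}=1$). For the random dominating set $S=T\cup Y_T$ with $p=\frac{\ln(k+1)}{k+1}$, the expectation $\mathbb{E}\bigl[\frac{1}{d_H(v)+1}\bigr]$ can be computed exactly from the identity $\mathbb{E}\bigl[\frac{1}{B(d,p)+1}\bigr]=\frac{1-(1-p)^{d+1}}{(d+1)p}$, giving $\mathbb{E}[D(H)]\le \frac{2n}{k+1}$. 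One then feeds this into the concave merging function $f_{n,k}$ via Jensen, obtaining $\mathbb{E}\bigl[|S|+f(D(H))\bigr]\le \frac{n}{k+1}(\ln(k+1)+4)-2$. The point is that $\sum 1/(d_H(v)+1)$ is a tractable random variable with clean first moment, whereas the actual component count---or a minimum--degree proxy for it---is not.
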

The proof here  too provides an efficient randomized algorithm 
for finding a connected dominating set with expected size as in
the theorem. This algorithm can be derandomized and converted into
an efficient deterministic algorithm.

\subsection{Proofs}

In the proofs we use the following  simple lemma.
\begin{lemma}
\label{l715}
Let $G=(V,E)$ be a connected graph with $n$ vertices and minimum degree at
least $k$. Let $S \subset V$ be a dominating set of $G$, let $H$
be the induced subgraph of $G$ on $S$, and suppose the number of
its connected components is $x=(y+z)\frac{n}{k+1}$ where
$ y$ is a nonnegative integer and $0 \leq z  \leq 1$ is a real. 
Then $\gamma_c(G) \leq |S|+f(x)$, where $f=f_{n,k}$ is the function
defined in the previous subsection.
\end{lemma}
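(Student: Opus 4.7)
The plan is to induct on the number of components $x$ of $H$. The base case $x = 1$ is immediate since then $S$ itself is a connected dominating set and one checks $f_{n,k}(1) = 0$. For the inductive step, associate to each $v \in V \setminus S$ the quantity $c(v) = |\{i : v \in N(C_i)\}|$. The key preparatory observation is that any vertex of $S$ adjacent to $C_i$ must itself lie in $C_i$, so $N(C_i) \setminus C_i \subseteq V \setminus S$, while $|N[C_i]| \geq k + 1$ since any single vertex of $C_i$ already has closed neighborhood of that size in $G$. Combining these yields the averaging estimate
$$
\sum_{v \in V \setminus S} c(v) \;=\; \sum_{i=1}^{x} |N(C_i) \setminus C_i| \;\geq\; (k+1)x - |S|.
$$

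In the case $y \geq 1$ (equivalently $x > n/(k+1)$), this bound produces a vertex $v$ with $c(v) \geq y + 1$: otherwise all $c(v) \leq y$ would force $\sum c(v) \leq y(n - |S|) = yn - y|S|$, yet $(k+1)x - |S| > yn - |S| \geq yn - y|S|$ since $(y-1)|S| \geq 0$. Adding such a $v$ to $S$ merges the $c(v) \geq y + 1$ adjacent components of $H$ into a single component, producing a dominating set $S' = S \cup \{v\}$ whose induced subgraph has at most $x - y$ components. I would then apply the induction hypothesis to $S'$ and invoke property~(\ref{e7new}) with $w' = y$ (valid because $x > y n/(k+1) \geq y$ forces $x \geq y + 1$) to obtain $f(x - y) + 1 \leq f(x)$, and hence $\gamma_c(G) \leq (|S| + 1) + f(x - y) \leq |S| + f(x)$.

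In the remaining case $y = 0$ with $x \geq 2$ (so $1 < x \leq n/(k+1)$), I would pick any component $C$ and perform BFS from $C$ in $G$, with layers $L_0 = C, L_1, L_2, \ldots$, noting $L_1 \subseteq V \setminus S$ by the same reasoning. If some $v \in L_1$ already satisfies $c(v) \geq 2$, then adding $v$ merges two components at cost one vertex. Otherwise every $v \in L_1$ is adjacent only to $C$, which forces $L_2 \cap S = \emptyset$ (a witness $w \in L_2 \cap S$ would lie in a different component and produce $c(v) \geq 2$ for its $L_1$-neighbor), and then domination pushes the $S$-neighbor of any $w \in L_2$ into $L_3 \cap S$, belonging to some component $C' \neq C$; adding the two vertices $v \in L_1$ and $w \in L_2$ along the path $C \to v \to w \to s$ (for $s \in L_3 \cap S \cap C'$) merges $C$ with $C'$ at cost two vertices. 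In either subcase the new component count satisfies $x' \leq x - 1$ and $|S'| \leq |S| + 2$, and since $f_{n,k}$ has slope $2$ in this regime one has $f(x) - f(x-1) = 2$, so the induction closes. The main technical step is the averaging inequality together with its use to extract a high-$c(v)$ vertex in the $y \geq 1$ regime; the BFS subcase analysis for $y = 0$ is routine once $L_1 \subseteq V \setminus S$ is in hand.
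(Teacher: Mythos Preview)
Your proof is correct and follows essentially the same inductive structure as the paper's: induct on the number of components $x$, handle $1<x\le n/(k+1)$ by merging via paths of length at most two, and handle $x>n/(k+1)$ by an averaging argument that locates a single vertex adjacent to many components. The only notable difference is in the averaging step: the paper picks one representative $v(C)$ per component and pigeonholes the closed neighborhoods $N[v(C)]$ (each of size $\ge k+1$) over all $n$ vertices to find some $u$ lying in $\ge\lceil (k+1)x/n\rceil$ of them, whereas you work with the full component neighborhoods $N[C_i]$, restrict the averaging to $V\setminus S$, and extract $v$ with $c(v)\ge y+1$ via the inequality $(k+1)x-|S|>y(n-|S|)$. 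Both routes yield a vertex whose addition drops the component count by at least $y$, and both close the induction via property~(\ref{e7new}); the paper's version is slightly more direct, while yours makes the role of $V\setminus S$ more explicit.
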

\vspace{0.2cm}

\noindent
{\bf Proof:}\,
Starting with the dominating set $S$ we prove, by
induction on $x$, that it is always possible to add to it at most
$f(x)$ additional vertices to get a connected dominating set.
For $x=1$ the given set is already connected, and as $f(1)=0$ the
result in this case is trivial.
If $1<x \leq \frac{n}{k+1}$ 
we note that as long as there are at least two components, each one
$C$ can be merged to another one by adding at most two vertices. Indeed,
every vertex in the second neighborhood of $C$ is dominated, hence
adding the two vertices of a path from $C$ to any such vertex
merges $C$ to another component. This means that by adding at most
$2 (x-1)=f(x)$  vertices to $S$ we get a connected
dominating set, as needed.

If $x> \frac{n}{k+1}$ pick arbitrarily one vertex $v=v(C)$ in each
of the $x$ connected components of $H$ and let $N(v)$ denote
its closed neighborhood consisting of $v$ and all its neighbors in
$G$. This set is of size at least $k+1$. Therefore there is a
vertex $u$ of $G$ that belongs to at least $\lceil (k+1)x/n \rceil$
of these closed neighborhoods. (This can in fact be slightly improved
as none of the vertices of the dominating set belongs to more than one
such closed neighborhood, but we do not use this improvement
here).
Define $S'=S \cup \{u\}$ and note
that adding $u$ merges at least $\lceil (k+1)x/n \rceil$ components. 
Therefore, if $x > w \frac{n}{k+1}$  for an integer $w \geq 1$,
then the number of connected components of the induced subgraph of
$G$ on the dominating set $S'$ is $x-w'$ for some $w' \geq w$.
By induction one can add to $S'$ at most $f(x-w')$ additional
vertices to get a connected dominating set, and the desired result
follows from (\ref{e7new}).
\hfill $\Box$

The proof clearly supplies an efficient deterministic
algorithm for finding a connected dominating set of the required
size, given the initial dominating set $S$.
\vspace{0.2cm}

\noindent
{\bf Proof of Theorem \ref{t713}:}\,
This is an immediate consequence of Lemma \ref{l715} together with
the obvious fact that if $\gamma(G)=\gamma$ then $G$ contains a
dominating set $S$ of size $\gamma$ with at most $|S|=\gamma$
connected components.  The 
known fact that $\gamma\leq \frac{n}{k+1}(
\ln (k+1)+1)$ implies that $\gamma \leq \frac{n}{k+1} (y+z)$
with $y=\lceil \ln(k+1) \rceil$ and $z=1$. The definition of the
function $f=f_{n,k}$ thus implies that
$$
f_{n,k}(\gamma) \leq \frac{n}{k+1}(\frac{1}{y}+\frac{1}{y-1}+
\ldots + \frac{1}{1}+2) -2 < \frac{n}{k+1}(\ln y+3),
$$
completing the proof.
\hfill $\Box$
\vspace{0.2cm}

\noindent
{\bf Proof of Theorem \ref{t712}:}\,
This follows from Theorem \ref{t713} together with the fact that
$\gamma(G) \leq \frac{n}{k+1}  (\ln (k+1)+1)$. 
\hfill $\Box$

In order to prove Theorem \ref{t714} we need two simple lemmas.
The first one is a known fact, c.f., e.g., \cite{CS}, Formula (3.2). 
for completeness we include a short proof.
\begin{lemma}
\label{l716}
For a positive integer $k$ and a real $p \in (0,1)$, let
$B(k,p)$ denote the Binomial random variable with parameters
$k$ and $p$. Then the expectation of $\frac{1}{B(k,p)+1}$ satisfies
$$
E[\frac{1}{B(k,p)+1}]=\frac{1}{(k+1)p}-\frac{(1-p)^{k+1}}{(k+1)p}.
$$
\end{lemma}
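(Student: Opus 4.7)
The plan is to prove this by a direct computation starting from the definition of the expectation as a finite sum and then applying a standard binomial coefficient identity to shift the index so that the sum becomes (almost) a full binomial expansion.

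First I would write
$$
E\!\left[\frac{1}{B(k,p)+1}\right] = \sum_{j=0}^{k} \frac{1}{j+1}\binom{k}{j} p^{j}(1-p)^{k-j}.
$$
The key algebraic observation is the identity
$$
\frac{1}{j+1}\binom{k}{j} = \frac{1}{k+1}\binom{k+1}{j+1},
$$
which follows immediately from expanding both sides in factorials. Substituting this in pulls out a factor of $1/(k+1)$ and leaves a sum of terms of the form $\binom{k+1}{j+1} p^j (1-p)^{k-j}$.

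Next I would reindex by setting $i = j+1$, so the sum runs from $i=1$ to $i=k+1$, and multiply and divide by $p$ in order to produce the weights of a $B(k+1,p)$ distribution:
$$
E\!\left[\frac{1}{B(k,p)+1}\right] = \frac{1}{(k+1)p}\sum_{i=1}^{k+1} \binom{k+1}{i} p^{i}(1-p)^{k+1-i}.
$$
The full binomial sum from $i=0$ to $i=k+1$ equals $1$, and the missing $i=0$ term is $(1-p)^{k+1}$, so the sum above equals $1-(1-p)^{k+1}$. Substituting yields exactly
$$
E\!\left[\frac{1}{B(k,p)+1}\right] = \frac{1-(1-p)^{k+1}}{(k+1)p} = \frac{1}{(k+1)p} - \frac{(1-p)^{k+1}}{(k+1)p},
$$
which is the claimed identity.

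There is no real obstacle here: the whole argument is just the combinatorial identity $\frac{1}{j+1}\binom{k}{j} = \frac{1}{k+1}\binom{k+1}{j+1}$ followed by recognizing the remaining sum as a binomial distribution missing only its $i=0$ term. The only ``care'' needed is the bookkeeping with the factor of $p$ when reindexing, which is handled by multiplying and dividing by $p$ as shown.
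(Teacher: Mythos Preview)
Your proof is correct. The paper's argument differs slightly in packaging: instead of invoking the identity $\frac{1}{j+1}\binom{k}{j}=\frac{1}{k+1}\binom{k+1}{j+1}$ directly, it factors out $(1-p)^k$, writes the remaining sum in the variable $x=p/(1-p)$, and then obtains $\sum_{i}\frac{1}{i+1}\binom{k}{i}x^{i}$ by integrating the binomial expansion $(1+x)^k=\sum_i \binom{k}{i}x^i$ and dividing by $x$. The two routes are essentially the same computation---your combinatorial identity is exactly what the integration step produces term by term---so neither approach buys anything the other does not; yours is arguably the more transparent of the two since it avoids the change of variable and stays entirely within finite sums.
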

\vspace{0.2cm}

\noindent
{\bf Proof:}\,  By definition
$$
E[\frac{1}{B(k,p)+1}]
=\sum_{i=0}^k \frac{1}{i+1} {k \choose i} p^i (1-p)^{k-i}
=(1-p)^k \sum_{i=0}^k \frac{1}{i+1}{k \choose i} (\frac{p}{1-p})^i.
$$
By the Binomial formula $(1+x)^k = \sum_{i=0}^k {k \choose i} x^i$.
Integrating we get
$$
\frac{(1+x)^{k+1} -1}{k+1}= \sum_{i=0}^k \frac{1}{i+1} {k \choose
i} x^{i+1}.
$$
Dividing by $x$ and plugging $x=\frac{p}{1-p}$ the desired result
follows.
\hfill $\Box$

\begin{lemma}
\label{l717}
Let $H=(V,E)$ be a graph. For every $v \in V$ 
let $d_H(v)$ denote the degree of $v$ in $H$. Then the number of
connected components of $H$ is at most $D(H)=\sum_{v \in V}
\frac{1}{d_H(v)+1}$.
\end{lemma}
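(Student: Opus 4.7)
The plan is to prove the bound component by component. Let $C_1, C_2, \ldots, C_t$ be the connected components of $H$, with $|V(C_j)| = n_j$. The goal is to show that $\sum_{v \in V(C_j)} \frac{1}{d_H(v)+1} \geq 1$ for each $j$; summing over $j$ then yields $D(H) \geq t$, as desired.

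The key observation is that each vertex $v \in V(C_j)$ has all its neighbors inside $C_j$, so $d_H(v) \leq n_j - 1$, which gives $\frac{1}{d_H(v)+1} \geq \frac{1}{n_j}$. Summing this trivial lower bound over all $n_j$ vertices of $C_j$ produces
\[
\sum_{v \in V(C_j)} \frac{1}{d_H(v)+1} \;\geq\; n_j \cdot \frac{1}{n_j} \;=\; 1.
\]
Summing over the $t$ components:
\[
D(H) \;=\; \sum_{j=1}^t \sum_{v \in V(C_j)} \frac{1}{d_H(v)+1} \;\geq\; \sum_{j=1}^t 1 \;=\; t,
\]
which is exactly the claimed bound. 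There is no real obstacle here: the only fact used beyond arithmetic is that a connected graph on $n$ vertices has maximum degree at most $n-1$, which is immediate. The same argument is robust enough that one could instead invoke the slightly sharper fact that the average of $\frac{1}{d+1}$ over a connected component is at least $\frac{1}{n_j}$ by convexity, but the crude per-vertex inequality already suffices.
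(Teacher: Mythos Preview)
Your proof is correct and is essentially identical to the paper's argument: the paper also bounds the contribution of each connected component $C$ with $m$ vertices by $\sum_{v\in C}\frac{1}{d(v)+1}\ge \sum_{v\in C}\frac{1}{m}=1$ and then sums over components.
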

\vspace{0.2cm}

\noindent
{\bf Proof:}\, The contribution to $D(H)$ from the vertices in any
connected component $C$ of $H$ with $m$ vertices is 
$$
\sum_{v \in C} \frac{1}{d(v)+1} \geq \sum_{v \in C}
\frac{1}{m} =1.
$$
\hfill $\Box$
\vspace{0.2cm}

\noindent
{\bf Proof of Theorem \ref{t714}:}\, 
Recall that the function $f=f_{n,k}$  defined in the previous
subsection is concave.
Therefore, by Jensen's Inequality, for every positive random
variable $X$, $E[f(X)] \leq f(E[X])$.

Let $G=(V,E)$ be a connected graph with $n$ vertices and
minimum degree at least $k$.
By Lemma \ref{l715} if there is a dominating set $S$ of $G$
and the induced subgraph of $G$ on $S$ has
$x$ connected components, then 
\begin{equation}
\label{e711}
\gamma_c(G) \leq |S|+f(x).
\end{equation}
For a dominating set $S$, let $H=H(S)$ be the induced subgraph
of $G$ on $S$, and put $D(H)=\sum_{v \in S} \frac{1}{d_H(v)+1}$
where $d_H(v)$ is the degree of $v$ in $H$. 
By Lemma \ref{l717} the number of connected components of
$H$ is at most $D(H)$, and since the function $f=f_{n,k}$ defined
above is monotone increasing this implies, by (\ref{e711}), that
\begin{equation}
\label{e712}
\gamma_c(G) \leq |S| +f(D(H)) =|S|+f (\sum_{v \in S}
\frac{1}{d_H(v)+1}).
\end{equation}
We next describe a random procedure for generating a dominating set
$S$ and complete the proof by upper bounding the expectation of the
right-hand-side of (\ref{e712}). The procedure is the standard one
described in \cite{AS}, Theorem 1.1.2 for generating a dominating
set. Define $p=\frac{\ln(k+1)}{k+1}$ and let $T$ be a random set of 
vertices of $G$ obtained by picking, randomly and independently,
each vertex of $G$ to be a member of $T$ with probability $p$.
Let $Y=Y_T$ be the set of all vertices of $G$ that are not
dominated by $T$, that is, all vertices in $V-T$ that have no
neighbors in $T$. The set $S$ defined by $S=T \cup Y_T$ is clearly
dominating. The expected size of $T$ is $np$. The expected size of
$Y_T$ is at most $n(1-p)^{k+1}$, since for any vertex $v$ the
probability it lies in $Y_T$ is exactly $(1-p)^{d_G(v)+1}
\leq (1-p)^{k+1}$, and the bound for the expectation of 
$|Y_T|$ follows by linearity
of expectation.  We proceed to bound the expectation of
$f(\sum_{v \in S} \frac{1}{d_H(v)+1}).$ By Jensen's Inequality 
and the convexity of $f$ mentioned above this
is at most $f(E[\sum_{v \in S} \frac{1}{d_H(v)+1}]).$ 
Since $f$ is monotone increasing it suffices to bound the 
expectation $E[\sum_{v \in S} \frac{1}{d_H(v)+1}]$.

Fix a vertex $v$. The probability it belongs to
$Y_T$ (and hence has degree $0$ in $H$) is 
$(1-p)^{d+1}$, where $d$ is its degree in $G$. 
The probability it belongs to $T$
and has degree $i$ in $H$ is  
$p {d \choose i} p^i (1-p)^{d-i}$.
Therefore, the expectation of $\frac{1}{d_H(v)+1}$
is, by Lemma \ref{l716},
$$
(1-p)^{d+1} + p(\frac{1}{(d+1)p} -\frac{(1-p)^{d+1}}{(d+1)p})
< (1-p)^{k+1}+\frac{1}{k+1}.
$$
Since $(1-p)^{k+1} \leq e^{-p(k+1)}=\frac{1}{k+1}$ 
this implies, by linearity
of expectation, that
$$
E[\sum_{v \in S} \frac{1}{d_H(v)+1}] \leq \frac{2n}{k+1}.
$$
Using, again, linearity of expectation and the fact that
$f_{n,k}(\frac{2n}{k+1}) =3 \frac{n}{k+1}-2$
we conclude that the expectation of the right-hand-side of
(\ref{e712}) is at most
$$
np+n(1-p)^{k+1}+ 3\frac{n}{k+1}-2 \leq \frac{n}{k+1} (\ln (k+1)+4)-2.
$$
Therefore there is a dominating set $S$ for which this expression
is at most the above quantity,
completing the proof. 
\hfill $\Box$

\subsection{Algorithm}

\noindent
The proof of Theorem \ref{t714} clearly supplies a randomized
algorithms generating a connected dominating set of expected size
at most as in the theorem in any given connected input graph
$G=(V,E)$ with
$n$ vertices and minimum degree at least $k$. This algorithm can 
be derandomized using the method of conditional expectations,
yielding a polynomial time deterministic algorithm for finding 
such a connected dominating set. Here is the
argument. Let $v_1,v_2, \ldots ,v_n$ be an arbitrary numbering of
the vertices of $G$. The algorithm generates a dominating set
$S$ satisfying 
$$
|S| +f(D(H)) =|T|+|Y_T| +f (\sum_{v \in S} \frac{1}{d_H(v)+1}  )
\leq \frac{n}{k+1} (\ln (k+1)+4)-2,
$$
where $f=f_{n,k}$ is the function defined in the proof of Theorem
\ref{t714}, $H$ is the induced subgraph of $G$ on $S=T \cup Y_T$ and
$D(H)=\sum_{v \in S} \frac{1}{d_H(v)+1}$.
Once such an $S$ is found it is clear that the proof of
the theorem provides an efficient way to construct a connected
dominating set of the required size using it.

The algorithm produces $S$ as above by
going over the vertices $v_i$ in order, where in
step $i$ the algorithm decides whether or not to add $v_i$ to $S$.
Let $S_i$ denote $S \cap \{v_1,v_2, \ldots ,v_i\}$. Thus 
$S_0=\emptyset$. For each $i$, $0 \leq i \leq n$, define
a potential function $\psi_i$ in terms of the conditional expectations
of $|S|=|T|+|Y_T|$ given $S_i$, which is denoted by
$E[|S||S_i]$ and the conditional expectation 
of $\sum_{v \in S} \frac{1}{d_H(v)+1}$ given $S_i$, denoted by 
$E[\sum_{v \in S} \frac{1}{d_H(v)+1}| S_i]$. In this notation
$$
\psi_i=E[|S||S_i]+f(E[D(H) |S_i]= E[|T| | S_i]+E[|Y_T| | S_i]
+f(E[\sum_{v \in S} \frac{1}{d_H(v)+1}| S_i]).
$$
Given the graph  $G$ and the set $S_i$, it is not difficult to
compute $\psi_i$ in polynomial time. 
Indeed, by linearity of expectation, the conditional expectation
$E[|T||S_i]$ is computed by adding the contribution of each vertex
$v=v_j$ to it. For $j \leq i$ this contribution is $1$ if
$v_j \in T$ and $0$ if $v_j \not \in T$. For $j>i$ the contribution
is $p$. The contribution of $v_j$ to $E[Y_T|S_i]$ 
is $0$ if
$v_j$  is already dominated by a vertex in $S_i$, and if it is not,
then it is 
$(1-p)^s$, where $s$ is the number of neighbors of $v_j$ 
(including $v_j$ itself if $j>i$)
in the set $V-\{v_1,v_2, \ldots ,v_i\}$.  

The conditional expectation $E[\sum_{v \in S} \frac{1}{d_H(v)+1}|
S_i]$ is also computed using linearity of expectation, where the
contribution of each vertex  $v_j$ is 
$E[\frac{1}{d_H(v_j+1)}| S_i]$.  This is also simple to compute
in all cases. We describe here only one representative example.
If $j>i$, $q$ of the neighbors of $v_j$ appear in  $S_i$,
and the number of its neighbors in $G$ which lie in
$V-\{v_1, v_2, \ldots ,v_i\}$ is $s$, then 
$$
E[\frac{1}{d_H(v_j+1)}| S_i]=p \cdot \sum_{a=0}^{s} {s \choose a}
p^a (1-p)^{s-a}\frac{1}{q+1+a}.
$$
A similar expression exists in every other possible case.

Put $\psi_i=\psi_i^{(T)}+\psi_i^{(Y)}+
\psi_i^{(f)}$, where
$\psi_i{(T)}=E[|T| | S_i]$,
$\psi_i{(Y)}=E[|Y_T| | S_i]$,
 and 
$\psi_i^{(f)}=f[E(D(H)| S_i]$.
By the definition of conditional expectation
\begin{equation}
\label{e713}
\psi_{i}^{(T)} =p E[|T|~ |~ S_{i+1}=S_i \cup v_{i+1}]
+(1-p) E[|T|~ |~ S_{i+1}=S_i]
\end{equation}
and 
\begin{equation}
\label{e7131}
\psi_{i}^{(Y)} =p E[|Y_T|~ |~ S_{i+1}=S_i \cup v_{i+1}]
+(1-p) E[|Y_T|~ |~ S_{i+1}=S_i]
\end{equation}
Similarly, using the fact that the function $f$ is concave
$$
\psi_i^{(f)}=
f(p E[\sum_{v \in H} \frac{1}{d_H(v_j)+1)}| S_{i+1}=S_i \cup v_{i+1}]
+(1-p) E[\sum_{v \in H} \frac{1}{d_H(v_j)+1)}| S_{i+1}=S_i] )
$$
$$
\geq p f(E[\sum_{v \in H} \frac{1}{d_H(v_j)+1)}| S_{i+1}=S_i \cup
v_{i+1}])
+(1-p) f ( E[\sum_{v \in H} \frac{1}{d_H(v_j)+1})| S_{i+1}=S_i] )
$$
$$
\geq \min \{ 
f(E[\sum_{v \in H} \frac{1}{d_H(v_j)+1)}| S_{i+1}=S_i \cup
v_{i+1}]), 
f ( E[\sum_{v \in H} \frac{1}{d_H(v_j)+1)}| S_{i+1}=S_i] ).
$$
Let $\psi_{i+1}^{+}$ denote the value of $\psi_{i+1}$ with
$S_{i+1}=S_i \cup v_{i+1}$ and $\psi_{i+1}^{-}$
denote the value of $\psi_{i+1}$ with $S_{i+1}=S_i$.

By adding the last inequality and (\ref{e713}),(\ref{e7131})  
we conclude that
$$
\psi_i \geq \min \{\psi_{i+1}^+, \psi_{i+1}^{-} \}.
$$
Therefore, if the algorithm decides in each step $i+1$ whether
or not to  add $v_{i+1}$ to $S_i$ in order to get $S_{i+1}$ 
by choosing the option that minimizes the value of
$\psi_{i+1}$, then the potential function
$\psi_i$ is a monotone decreasing function of $i$.
Since $\psi_0$ is at most 
$\frac{n}{k+1}( \ln (k+1)+4)-2$ by the proof of Theorem
\ref{t714}, so is $\psi_n$. However,
$\psi_n$ is exactly $|S|+f(D(H))$  for the dominating set $S$
constructed by the algorithm. This completes the description
of the algorithm and its correctness.

\subsection{Problem}
We conclude with the following problem.
\vspace{0.1cm}

\noindent
{\bf Problem:}\, Determine or estimate the maximum possible value
of the difference $\gamma_c(G)-\gamma(G)$, where the maximum is
taken over all connected graphs $G$ with $n$ vertices and minimum
degree at least $k$. 
\vspace{0.2cm}

\noindent
By Theorem \ref{t713} this maximum is at most $\frac{n}{k+1}(\ln
\lceil \ln (k+1) \rceil +3)$. It is not difficult to show that it
is at least  $\lfloor \frac{n}{k+1} \rfloor-1$. 
To see this assume, for simplicity, that $k+1$ divides $n$ and 
put $m=\frac{n}{k+1}$.
For each $0 \leq i <m$ let $K_i$ be the graph
obtained from a clique on $k+1$ vertices by deleting a single edge
$x_iy_i$. Let $G$ be the $k$-regular graph obtained from the vertex
disjoint union of the $m$ graphs $K_i$ by adding the edges
$y_ix_{i+1}$ for all $0 \leq i <m$, where $x_m=x_0$. For this cycle
of cliques $G$, $\gamma(G)=m=\frac{n}{k+1}$ as shown by a dominating
set consisting of one vertex in each $K_i-\{x_i,y_i\}$ - this is a 
minimum dominating set as $G$ is $k$-regular. 
On the other hand the induced subgraph on
any connected dominating set must contain at least $m-1$ of the
edges $y_ix_{i+1}$ and their endpoints, and it is not difficult to
check that it must contain at least one additional vertex. Thus
$\gamma_c(G)=2m-1=2 \frac{n}{k+1}-1$. It will be
interesting to close the $\ln \ln (k+1)$ gap between the upper and
lower bounds and decide whether or not the above maximum is 
$\Theta(\frac{n}{k+1})$.

\noindent
{\bf Acknowledgment}
I thank Eli Berger, Michael Krivelevich, Shay Moran, 
Shubhangi Saraf, Madhu Sudan and Tibor Szab\'o for helpful
discussions.


\begin{thebibliography}{99}

\bibitem{AABCKLZ}
R. Aharoni, N. Alon, E. Berger, M. Chudnovsky, D.
Kotlar, M. Loebl and R. Ziv,
Fair representation by independent sets,
in: "A Journey through Discrete Mathematics. A Tribute to Jiri
Matousek"
(Jan Kratochvil, Martin Loebl, Jarik Nesetril, Robin Thomas and
Pavel Valtr, editors), Springer (2017), 31--58.

\bibitem{Al}
N. Alon, Choice number of graphs: A probabilistic approach, Combin
Probab Comput 1 (1992), 107--114.

\bibitem{Al50}
N. Alon, Disjoint directed cycles,
J. Combinatorial Theory, Ser. B 68 (1996), 167-178.

\bibitem{Al00}
N. Alon, Combinatorial Nullstellensatz, Combinatorics, Probability
and Computing 8 (1999), 7-29.

\bibitem{Al1}
N. Alon, Problems and results in extremal combinatorics,
I, Discrete Math. 273 (2003), 31-53.

\bibitem{Al51}
N. Alon, Splitting digraphs,
Combinatorics, Probability and Computing 15 (2006), 933-937.

\bibitem{Al2}
N. Alon, Problems and results in extremal combinatorics, II,
Discrete Math. 308 (2008), 4460-4472.

\bibitem{Al3}
N. Alon,
Problems and results in Extremal Combinatorics - III,
Journal of Combinatorics 7 (2016), 233--256.

\bibitem{Al4}
N. Alon,
Explicit expanders of every degree and size, arXiv:2003.11673v1

\bibitem{ABCO}
N. Alon, E. E. Bergmann, D. Coppersmith and A. M. Odlyzko,
Balancing sets of vectors, IEEE Transactions on Information Theory
34 (1988), 128-130.

\bibitem{AKV}
N. Alon, M. Kumar and B. L. Volk,
Unbalancing sets and an almost quadratic lower bound for
syntactically multilinear arithmetic circuits, Proc. CCC 2018,
11:1-11:16. Also:  Combinatorica 40 (2020), no. 2, 149--178.

\bibitem{AMP}
N. Alon, E. Mossel and R. Pemantle,
Distributed Corruption Detection in Networks,
Theory of Computing 16(1), 2020, 1--23.

\bibitem{AS}
N. Alon and J. H. Spencer, The Probabilistic Method,
Fourth Edition, Wiley, 2016, xiv+375 pp.

\bibitem{AW}
N. Alon and N. Wormald,
High degree graphs contain large-star factors,
in "Fete of Combinatorics", Bolyai Soc. Math. Studies, 20
(Gy. Katona, A. Schrijver and T.  Sz\"onyi, eds.), Springer 2010,
pp. 9-21.

\bibitem{BS}
E. Berger and P. D. Seymour, Private communication.

\bibitem{BE}
B.~Bollob{\'a}s and S.~E. Eldridge,
Packings of graphs and applications to computational complexity,
J. Combin. Theory Ser. B 25 (1978), 105--124.

\bibitem{Ca}
P.~A. Catlin,
Subgraphs of graphs. {I},
Discrete Math. 10 (1974), 225--233.

\bibitem{CS}
M. T. Chao and W. E. Strawderman,
Negative Moments of Positive Random Variables,
Journal of the American Statistical Association
Vol. 67, No. 338 (1972), pp. 429--431.

\bibitem{CWY}
Y. Caro, D. B. West and R.  Yuster,
Connected domination and spanning trees with many leaves,
SIAM J. Discrete Math. 13 (2000), no. 2, 202--211.

\bibitem{ES}
P. Erd\H{o}s and H. Sachs,
Regul\"are Graphen gegebener Taillenweite mit minimaler
Knotenzahl,
Wiss. Z. Matin-Luther-Univ. Halle-Wittenberg Math.-Natur. Reihe
12 (1963), 251--257.

\bibitem{ERT}
P. Erd\H{o}s, A. L. Rubin, and H. Taylor, Choosability in graphs,
Proceedings of West Coast Conference on Combinatorics, Graph Theory
and Computing 1979, Congr. Numer. 26 (1980), 125--157.

\bibitem{GK}
N. Gazit and M. Krivelevich, On the asymptotic value of the choice
number of complete multi-partite graphs, J Graph Theory 52 (2006),
123--134.

\bibitem{Heg}
G. Heged\H{u}s,
Balancing sets of vectors,
Studia Sci. Math. Hungar. 47 (2010), 333--349.

\bibitem{Hoe}
W. Hoeffding, Probability inequalities for sums of bounded random
variables, J. Amer. Stat. Assoc. 58 (1963) 13--30.
\bibitem{HW}
D. Haussler and E. Welzl, $\epsilon$-nets and simplex
range queries, Discrete  Comput. Geom. (1987), pp. 127--151.

\bibitem{JLR}
S. Janson, \L uczak and A. Ruci\'nski,
Random graphs.
Wiley-Interscience Series in Discrete Mathematics and Optimization.
Wiley-Interscience, New York, 2000. xii+333 pp.

\bibitem{JS}
K. Jogdeo and S. M. Samuels, Monotone convergence of binomial
probabilities and a generalization of Ramanujan's equation. The
Annals of Mathematical Statistics, 39:1191--1195, 1968.

\bibitem{Ki}
H. Kierstead, On the choosability of complete multipartite graphs
with size part 3, Discrete Math. 211 (2000), 255--259.

\bibitem{KSW}
H. A. Kierstead, A. Salmon and R. Wang,
On the choice number of complete multipartite graphs with part size
four, arXiv 1407.3817

\bibitem{KKY}
H.~A. Kierstead, A.~V. Kostochka and G. Yu,
Extremal graph packing problems: {O}re-type versus {D}irac-type,
In: Surveys in combinatorics 2009, volume 365 of London
Math. Soc. Lecture Note Ser., pages 113--135. Cambridge Univ. Press,
Cambridge, 2009.

\bibitem{Kr}
M. Krivelevich, Private communication.

\bibitem{Lo}
L. Lov\'asz, On the ratio of optimal and integral fractional
covers, Disc. Math. 13 (1975), 383--390.

\bibitem{LPS}
A. Lubotzky, R. Phillips and P. Sarnak,
Ramanujan graphs,
Combinatorica 8(3), 261--277, 1988.

\bibitem{Ma}
G. A. Margulis, Explicit group-theoretical constructions of
combinatorial schemes and their
application to the design of expanders and superconcentrators,
Problemy Peredachi Informatsii 24(1988), 51-60 (in Russian).
English translation in Problems of Information Transmission
24(1988), 39--46.

\bibitem{PS}
A. Pokrovskiy and B. Sudakov, A counterexample to Stein's
Equi-n-square Conjecture , Proc. AMS 147 (2019),
2281-2287.

\bibitem{Sh}
D. Shabanov,
On a generalization of Rubin's theorem,
J. Graph Theory 67 (2011), 226--234.

\bibitem{SS} N. Sauer and J. Spencer,
Edge disjoint placement of graphs,
J. Combinatorial Theory Ser. B, ~25 (1978), 295--302.

\bibitem{St}
S. K. Stein, Transversals of Latin squares and their
generalizations, Pacific J. Math. 59 (1975),
567--575.

\bibitem{Su}
M. Sudan, Private communication.

\bibitem{SY}
S. Saraf and S. Yekhanin, Noisy
interpolation of sparse polynomials, and applications. 26th Annual
IEEE Conference on Computational Complexity, 86--92, IEEE
Computer Soc., Los Alamitos, CA, 2011.

\bibitem{Ta}
M. Talagrand. Sharper bounds for Gaussian and empirical processes.
Annals Probab., 22:28-76, 1994.

\bibitem{Th}
C. Thomassen,
Disjoint cycles in digraphs, Combinatorica 3 (1983), 393-396.

\bibitem{Th1}
C. Thomassen, Even cycles in digraphs, European J. Combinatorics 6
(1985), 85-89.

\bibitem{VC}
V.N. Vapnik and A. Ya Chervonenkis. On the uniform convergence of
relative frequencies
of events to their probabilities. Theory Probab. Appl.,
16:264--280, 1971.

\bibitem{Vi}
V. G. Vizing, Coloring the vertices of a graph in prescribed
colors, Metody Diskret Anal v Teorii Kodov i Shem (in Russian),
Sobolev Institute of Mathematics, Novosibirsk 29 (1976), 3--10.

\end{thebibliography}
\end{document}